\numberwithin{equation}{section}
\theoremstyle{plain}
\newtheorem{Lemma}{Lemma}[section]
\newtheorem{Proposition}[Lemma]{Proposition}
\newtheorem{Theorem}[Lemma]{Theorem}
\newtheorem{Corollary}[Lemma]{Corollary}
\theoremstyle{definition}
\newtheorem{Definition}[Lemma]{Definition}
\newtheorem{Example}[Lemma]{Example}
\newtheorem{Examples}[Lemma]{Examples}
\newtheorem{Remark}[Lemma]{Remark}
\begin{document}
\title{\textbf{Markovian Integral Equations}}
\author{Alexander Kalinin\footnote{Department of Mathematics, University of Mannheim, Germany. Email: {\tt AlexKalinin@gmx.de}. The author gratefully acknowledges support by Deutsche Forschungsgemeinschaft (DFG) through Research Grants SCHI/3-1 and SCHI/3-2.}}
\maketitle

\begin{abstract}
We analyze multidimensional Markovian integral equations that are formulated with a time-inhomogeneous progressive Markov process that has Borel measurable transition probabilities. In the case of a path-dependent diffusion process, the solutions to these integral equations lead to the concept of mild solutions to semilinear parabolic path-dependent partial differential equations (PPDEs). Our goal is to establish uniqueness, stability, existence, and non-extendibility of solutions among a certain class of maps. By requiring the Feller property of the Markov process, we give weak conditions under which solutions become continuous. Moreover, we provide a multidimensional Feynman-Kac formula and a one-dimensional global existence- and uniqueness result.
\end{abstract}

\noindent
{\bf MSC2010 classification:} 45G15, 60H30, 60J25, 60J68, 35K40, 35K59.\\
{\bf Keywords:} integral equation, log-Laplace equation, superprocess, historical superprocess, path process, Feynman-Kac formula, mild solution, PDE, path-dependent PDE, PPDE.

\section{Introduction}

Markovian integral equations arise when dealing with diffusion processes and mild solutions to semilinear parabolic partial differential equations (PDEs). This fact was utilized by Dynkin~\cite{DynkinProb, DynkinSup} to give probabilistic formulas for mild solutions via the log-Laplace functionals of superprocesses. In this context, Schied~\cite{Schied} used Markovian integral equations to solve problems of optimal stochastic control in mathematical finance. By introducing path-dependent diffusion processes, the connection of Markovian equations to PDEs can be extended to path-dependent partial differential equations (PPDEs)\footnote{For a recent analysis of PPDEs in the context of classical and viscosity solutions, we refer the reader to Peng~\cite{PengViscosity, PengBSDE}, Peng and Wang~\cite{PengWang}, Ji and Yang~\cite{JiYang}, Ekren, Keller, Touzi, and Zhang~\cite{EkrenKellerTouziZhang}, and Henri-Labordere, Tan, and Touzi~\cite{Henry-LabordereTanTouzi}.}, as verified in the companion paper~\cite{KalininSchied}. Inspired by the applications of one-dimensional Markovian equations, the aim of this paper is to construct solutions even in a multidimensional framework.

Let $S$ be a separable metrizable topological space, $T > 0$, and $\mathscr{X}=(X,(\mathscr{F}_{t})_{t\in [0,T]},\mathbb{P})$ be a consistent progressive Markov process on some measurable space $(\Omega,\mathscr{F})$ with state space $S$ that has Borel measurable transition probabilities. We consider the following \emph{multidimensional Markovian integral equation} coupled with a terminal value condition:
\begin{equation}\label{MIE}\tag{M}
\begin{split}
E_{r,x}[u(t,X_{t})] &= u(r,x) + E_{r,x}\bigg[\int_{r}^{t}f(s,X_{s},u(s,X_{s}))\,\mu(ds)\bigg],\\
u(T,x) &= g(x)
\end{split}
\end{equation}
for all $r,t\in [0,T]$ with $r\leq t$ and each $x\in S$. Here, we assume implicitly that $k\in\mathbb{N}$, $D\in\mathscr{B}(\mathbb{R}^{k})$ has non-empty interior, $f:[0,T]\times S\times D\rightarrow\mathbb{R}^{k}$ is product measurable, $\mu$ is an atomless Borel measure on $[0,T]$, and $g:S\rightarrow D$ is Borel measurable and bounded.

We first remark that for $D=\mathbb{R}^{k}$ a Picard iteration and Banach's fixed-point theorem produce existence of solutions to \eqref{MIE} locally in time. This can be found, for example, in Pazy~\cite[Theorem 6.1.4]{Pazy} when $\mathscr{X}$ is a diffusion process. Regarding existence, we will suppose more generally that $D$ is convex. By modifying analytical methods from the classical theory of ordinary differential equations (ODEs), we will derive unique non-extendible solutions to \eqref{MIE} that are admissible in an appropriate topological sense. Moreover, weak conditions ensuring the continuity of the derived solutions will be provided. In the particular case when $D=\mathbb{R}^{k}$ and $f$ is an affine map in the third variable $w\in\mathbb{R}^{k}$, we will prove a representation for solutions to \eqref{MIE}. This gives a multidimensional generalization to the Feynman-Kac formula in Dynkin~\cite[Theorem 4.1.1]{DynkinBran}.

Let us also emphasize that non-negative solutions to one-dimensional Markovian integral equations are well-studied. Namely, for $k=1$ and $D=\mathbb{R}_{+}$, solutions to \eqref{MIE} have been deduced by a Picard iteration approach. For instance, the classical references are Watanabe~\cite[Proposition 2.2]{Watanabe}, Fitzsimmons~\cite[Proposition 2.3]{Fitzsimmons}, and Iscoe~\cite[Theorem A]{Iscoe}. In these works the existence of solutions to \eqref{MIE} is used for the construction of superprocesses. Dynkin~\cite{DynkinPart, DynkinPath, DynkinBran} establishes superprocesses with probabilistic methods by means of branching particle systems, which in turn yields another existence result to our Markovian integral equations.

These treatments of \eqref{MIE} in one dimension require that the function $f$ admits a representation that is related to measure-valued branching processes. To give one of the main examples, the following case is included in~\cite{DynkinPart, DynkinPath, DynkinBran}:
\begin{equation}\label{Specific Equation}
f(t,x,w) = b_{1}(t,x)w^{\alpha_{1}} + \cdots + b_{n}(t,x)w^{\alpha_{n}}
\end{equation}
for each $(t,x,w)\in [0,T]\times S\times\mathbb{R}_{+}$, where $n\in\mathbb{N}$, $b_{1},\dots,b_{n}:[0,T]\times S\rightarrow\mathbb{R}_{+}$ are Borel measurable and bounded, and $\alpha_{1},\dots,\alpha_{n}\in [1,2]$. Here, the bound $\alpha_{i}\leq 2$ for all $i\in\{1,\dots,n\}$ is strict. However, this paper intends to derive solutions without imposing a specific form of $f$. Rather, as in the multidimensional case, we will introduce regularity conditions for $f$ with respect to the Borel measure $\mu$ like local Lipschitz $\mu$-continuity. This will allow for a more general treatment of \eqref{MIE}. In particular, our approach includes the case
\[
f(t,x,w) = a(t,x) + b_{1}(t,x)\varphi_{1}(w) + \cdots + b_{n}(t,x)\varphi_{n}(w)
\]
for all $(t,x,w)\in [0,T]\times S\times\mathbb{R}_{+}$, where $a:[0,T]\times S\rightarrow (-\infty,0]$ is Borel measurable and bounded, and $\varphi_{1},\dots,\varphi_{n}:\mathbb{R}_{+}\rightarrow\mathbb{R}_{+}$ are locally Lipschitz continuous with $\varphi_{i}(0)= 0$ for each $i\in\{1,\dots,n\}$. Hence, \eqref{Specific Equation} is also feasible if $\alpha_{i} > 2$ for some $i\in\{1,\dots,n\}$. Note that we will not restrict our attention to the case $D=\mathbb{R}_{+}$. In fact, the one-dimensional global existence and uniqueness result, we will establish, is applicable provided $D$ is a non-degenerate interval. In this connection, the same weak conditions as before grant the continuity of solutions to \eqref{MIE}.

\medskip
The paper is structured as follows. In Section~\ref{Preliminaries and main results} we set up the framework. First, in Section~\ref{Time-space Cartesian products} we consider product spaces endowed with a pseudometric and introduce several map spaces. Section~\ref{Regularity with respect to Borel measures} presents regularity conditions for multidimensional measurable maps relative to a Borel measure. In Section~\ref{Time-inhomogeneous Markov processes} we give an adjusted definition of a Markov process that is in line with the classical notion. In Section~\ref{The Markovian terminal value problem} we introduce the Markovian terminal value problem \eqref{MIE}, by defining (approximate) solutions. In Section~\ref{The main results} the main results are presented. Section~\ref{Approach to the main results} shows our approach to the main results. In Section~\ref{Comparison, stability, and growth behavior of solutions} we compare solutions, prove their stability, and also investigate their growth behavior, while in Section~\ref{Local existence in time} we construct solutions locally in time. Finally, the main results are proven in Section~\ref{Proofs of the main results}.

\section{Preliminaries and main results}\label{Preliminaries and main results}

Throughout the paper, let $S$ be a separable metrizable topological space, $T > 0$, and $\mu$ be an atomless Borel measure on $[0,T]$. We fix $k\in\mathbb{N}$ and let $\mathbbm{I}_{k}$ be the identity matrix in $\mathbb{R}^{k\times k}$. To keep notation simple, we use $|\cdot|$ for the absolute value function, the Euclidean norm on $\mathbb{R}^{k}$, and the Frobenius norm on $\mathbb{R}^{k\times k}$.

\subsection{Time-space Cartesian products}\label{Time-space Cartesian products}

We endow $[0,T]\times S$ with a pseudometric $d_{S}$ generating a topology that is coarser than the product topology, which ensures that $\mathscr{B}([0,T]\times S)\subset\mathscr{B}([0,T])\otimes\mathscr{B}(S)$, since $S$ separable. For instance, $d_{S}$ could be any product metric on $[0,T]\times S$, in which case the Borel $\sigma$-field would coincide with the product $\sigma$-field. However, the presence of a pseudometric allows us to include path processes of path-dependent diffusions as specific strong Markov processes.

Let for the moment $I$ be a non-degenerate interval in $[0,T]$ and $(E,\|\cdot\|)$ be a normed space, then we call a map $u:I\times S\rightarrow E$ \emph{consistent} if $u(r,x) = u(s,y)$  for all $(r,x),(s,y)\in I\times S$ such that $d_{S}((r,x),(s,y))=0$. Moreover, $u$ is said to be \emph{right-continuous} if for each $(r,x)\in I\times S$ and every $\varepsilon > 0$ there is $\delta > 0$ such that
\[
\|u(s,y)-u(r,x)\| < \varepsilon
\]
for all $(s,y)\in I\times S$ with $s\geq r$ and $d_{S}((s,y),(r,x)) < \delta$. Clearly, if $u$ is (right-)continuous, then it is consistent. In addition, (right-)continuity of $u$ implies that $u(\cdot,x)$ is (right-)continuous for each $x\in S$ and $u(t,\cdot)$ is continuous for all $t\in [0,T]$, which entails that $u$ is Borel measurable.

\begin{Example}\label{Pseudometric Example}
Assume that $S=C([0,T],\mathbb{R}^{d})$ for some $d\in\mathbb{N}$ and let $\rho$ be a complete metric on $S$ that is equivalent to the maximum metric, then $S$ equipped with $\rho$ is Polish. Denote each map $x\in S$ stopped at time $t\in [0,T]$ by $x^{t}\in S$, that is, $x^{t}(s) = x(s\wedge t)$ for all $s\in [0,T]$. Let
\[
d_{S}((r,x),(s,y)) = |r-s| + \rho(x^{r},y^{s})
\]
for every $(r,x),(s,y)\in [0,T]\times S$, then $[0,T]\times S$ endowed with $d_{S}$ is a separable complete pseudometric space whose topology is indeed coarser than its product topology. Further, the map $u$ is consistent if and only if it is \emph{non-anticipative} in the sense that $u(t,x) = u(t,x^{t})$ for all $(t,x)\in [0,T]\times S$. This framework is used in~\cite{EkrenKellerTouziZhang} and~\cite{KalininSchied} to deal with PPDEs.
\end{Example}

Finally, for every $D\in\mathscr{B}(E)$, we let $B(S,D)$ and $B(I\times S,D)$ denote the sets of all $D$-valued Borel measurable maps on $S$ and $I\times S$, respectively. By $B_{b}(S,D)$ and $B_{b}(I\times S,D)$ we denote the set of all bounded $g\in B(S,D)$ and $u\in B(I\times S,D)$, respectively.

\subsection{Regularity with respect to Borel measures}\label{Regularity with respect to Borel measures}

We recall that for each non-degenerate interval $I$ in $[0,T]$, a function $a\in B(I,\mathbb{R})$ is locally $\mu$-integrable if and only if $\int_{r}^{t}|a(s)|\,\mu(ds) < \infty$ for all $r,t\in I$ with $r\leq t$.

\begin{Definition}
Suppose that $I\subset [0,T]$ is a non-degenerate interval, $(E,\|\cdot\|)$ is a normed space, and $a\in B(I\times S,E)$.
\begin{enumerate}[(i)]
\item The map $a$ is called \emph{(locally) $\mu$-dominated} if there is a (locally) $\mu$-integrable $\overline{a}\in B(I,\mathbb{R}_{+})$ such that $\|a(\cdot,y)\|\leq\overline{a}$ for all $y\in S$ $\mu$-a.s.~on $I$.
\item We say that $a$ is $\mu$-suitably bounded if for each $r,t\in I$ with $r\leq t$ there is a $\mu$-null set $N\in\mathscr{B}([0,T])$ such that $\sup_{(s,y)\in (N^{c}\cap [r,t])\times S}\|a(s,y)\| < \infty$.
\end{enumerate}
\end{Definition}

By using the notation in above definition, we see immediately that the set of all $E$-valued product measurable locally $\mu$-dominated maps on $I\times S$ is a linear space that contains every $E$-valued product measurable $\mu$-suitably bounded map on $I\times S$.

\begin{Definition}
Let $f:[0,T]\times S\times D\rightarrow\mathbb{R}^{k}$ be $\mathscr{B}([0,T]\times S)\otimes\mathscr{B}(D)$-measurable.
\begin{enumerate}[(i)]
\item We call $f$ \emph{affine $\mu$-bounded} if there exist two $\mu$-dominated $a,b\in B([0,T]\times S,\mathbb{R}_{+})$ such that $|f(t,x,w)|\leq a(t,x)+ b(t,x)|w|$ for all $(t,x,w)\in [0,T]\times S\times D$. If one can take $b=0$, then $f$ is called \emph{$\mu$-bounded}.
\item We say that $f$ is \emph{locally $\mu$-bounded} at $\hat{w}\in \overline{D}$ if there is a neighborhood $W$ of $\hat{w}$ in $\overline{D}$ for which $f|([0,T]\times S\times (W\cap D))$ is $\mu$-bounded. The map $f$ is called locally $\mu$-bounded if it is locally $\mu$-bounded at each $\hat{w}\in D$.
\item Let $k=1$, then $f$ is said to be \emph{affine $\mu$-bounded from below} if $f(t,x,w) \geq -a(t,x)$ $-\, b(t,x)|w|$ for all $(t,x,w)\in [0,T]\times S\times D$ and some $\mu$-dominated $a,b\in B([0,T]\times S,\mathbb{R}_{+})$. If $b=0$ is possible, then $f$ is \emph{$\mu$-bounded from below}. Moreover, $f$ is \emph{(affine) $\mu$-bounded from above} if $-f$ is (affine) $\mu$-bounded from below.
\end{enumerate}
\end{Definition}

For a $\mathscr{B}([0,T]\times S)\otimes\mathscr{B}(D)$-measurable map $f:[0,T]\times S\times D\rightarrow\mathbb{R}^{k}$ to be locally $\mu$-bounded, it is sufficient that it is affine $\mu$-bounded. If $f$ is locally $\mu$-bounded, then the Borel measurable map $f(\cdot,\cdot,\hat{w})$ is $\mu$-dominated for each $\hat{w}\in D$. Of course, for $k=1$ the function $f$ is (affine) $\mu$-bounded if and only if it is (affine) $\mu$-bounded from below and from above. 

\begin{Definition}
Let $f:[0,T]\times S\times D\rightarrow\mathbb{R}^{k}$ be $\mathscr{B}([0,T]\times S)\otimes\mathscr{B}(D)$-measurable.
\begin{enumerate}[(i)]
\item We call $f$ \emph{Lipschitz $\mu$-continuous} if there is a $\mu$-dominated $\lambda\in B([0,T]\times S,\mathbb{R}_{+})$ satisfying $|f(t,x,w) -  f(t,x,w')| \leq \lambda(t,x)|w-w'|$ for all $(t,x)\in [0,T]\times S$ and each $w,w'\in D$.
\item We call $f$ \emph{locally Lipschitz $\mu$-continuous} at $\hat{w}\in \overline{D}$ if there is a neighborhood $W$ of $\hat{w}$ in $\overline{D}$ such that $f|([0,T]\times S\times (W\cap D))$ is Lipschitz $\mu$-continuous. The map $f$ is locally Lipschitz $\mu$-continuous if it is locally Lipschitz $\mu$-continuous at every $\hat{w}\in D$.
\end{enumerate}
\end{Definition}  
    
In what follows, the linear space of all $\mathbb{R}^{k}$-valued $\mathscr{B}([0,T]\times S)\otimes\mathscr{B}(D)$-measurable, locally $\mu$-bounded, and locally Lipschitz $\mu$-continuous maps on $[0,T]\times S\times D$ is denoted by
\begin{equation}\label{Lipschitz Space}
BC_{\mu}^{1-}([0,T]\times S\times D,\mathbb{R}^{k}).
\end{equation}
Clearly, if $f:[0,T]\times S\times D\rightarrow \mathbb{R}^{k}$ is a $\mathscr{B}([0,T]\times S)\otimes\mathscr{B}(D)$-measurable map that is locally Lipschitz $\mu$-continuous and $f(\cdot,\cdot,\hat{w})$ is $\mu$-dominated for all $\hat{w}\in D$, then $f$ is locally $\mu$-bounded. If instead $f$ is Lipschitz $\mu$-continuous and $f(\cdot,\cdot,\hat{w})$ is $\mu$-dominated for at least one $\hat{w}\in D$, then $f$ is affine $\mu$-bounded.

\begin{Examples}\label{Borel Regularity Examples}
(i) Let $a\in B([0,T]\times S,\mathbb{R}^{k})$ and $b\in B([0,T]\times S,\mathbb{R}^{k\times k})$ be $\mu$-dominated. Assume that $\varphi:D\rightarrow\mathbb{R}^{k}$ is Borel measurable and fulfills
\[
f(t,x,w) = a(t,x) + b(t,x) \varphi(w)
\]
for all $(t,x,w)\in [0,T]\times S\times D$. Then the following two assertions hold:
\begin{enumerate}[(1)]
\item $f$ is (affine) $\mu$-bounded whenever $\varphi$ is (affine) bounded. If instead $\varphi$ is locally bounded, then $f$ is locally $\mu$-bounded. For $k=1$ and $b\geq 0$, it follows that $f$ is (affine) $\mu$-bounded from below (resp.~from above) if $\varphi$ is (affine) bounded from below (resp.~from above).

\item From the (local) Lipschitz continuity of $\varphi$ the (local) Lipschitz $\mu$-continuity of $f$ follows. Thus, if $\varphi$ is locally Lipschitz continuous, then $f\in BC_{\mu}^{1-}([0,T]\times S\times D, \mathbb{R}^{k})$.
\end{enumerate}

\noindent
(ii) Let $(U,\mathscr{U})$ be a measurable space and $n$ be a kernel from $[0,T]\times S$ to $(U,\mathscr{U})$. Suppose that $\varphi:U\times D\rightarrow\mathbb{R}^{k}$ is $\mathscr{U}\otimes\mathscr{B}(D)$-measurable and $\varphi(\cdot,w)$ is $n(t,x,\cdot)$-integrable for every $(t,x,w)\in [0,T]\times S\times D$. Let $f$ be of the form
\[
f(t,x,w) = \int_{U} \varphi(u,w)\, n(t,x,du)
\]
for each $(t,x,w)\in [0,T]\times S\times D$. Then the subsequent two assertions are valid:
\begin{enumerate}[(1)]
\item $f$ is locally $\mu$-bounded if for each $\hat{w}\in D$ there are a neighborhood $W$ of $\hat{w}$ in $D$ and an $\mathscr{U}$-measurable $a:U\rightarrow [0,\infty]$ with $|\varphi(u,w)|\leq a(u)$ for all $(u,w)\in U\times W$ such that $\int_{U}a(u)\,n(\cdot,\cdot,du)$ is finite and $\mu$-dominated.

\item $f$ is locally Lipschitz $\mu$-continuous if to all $\hat{w}\in D$ there are a neighborhood $W$ of $\hat{w}$ in $D$ and an $\mathscr{U}$-measurable $\lambda:U\rightarrow [0,\infty]$ with $|\varphi(u,w) - \varphi(u,w')|\leq \lambda(u)|w-w'|$ for all $u\in U$ and each $w,w'\in W$ such that $\int_{U}\lambda(u)\,n(\cdot,\cdot,du)$ is finite and $\mu$-dominated.
\end{enumerate}
\end{Examples}

\subsection{Time-inhomogeneous Markov processes}\label{Time-inhomogeneous Markov processes}

In the sequel, let $\mathscr{X}$ be a \emph{consistent Markov process} on some measurable space $(\Omega,\mathscr{F})$ with state space $S$ and \emph{Borel measurable transition probabilities}, which is a triple $(X,(\mathscr{F}_{t})_{t\in [0,T]},\mathbb{P})$ that is composed of a process $X:[0,T]\times\Omega\rightarrow S$, a filtration $(\mathscr{F}_{t})_{t\in [0,T]}$ to which $X$ is adapted, and a set $\mathbb{P}=\{P_{r,x}\,|\,(r,x)\in [0,T]\times S\}$ of probability measures on $(\Omega,\mathscr{F})$ such that the following three conditions hold:
\begin{enumerate}[(i)]
\item $d_{S}((r,X_{r}),(r,y)) = 0$ for all $r\in [0,s]$ $P_{s,y}$-a.s.~for each $(s,y)\in [0,T]\times S$.
\item  The function $[0,t]\times S\rightarrow [0,1]$, $(s,x)\mapsto P_{s,x}(X_{t}\in B)$ is consistent and Borel measurable for all $t\in [0,T]$ and every $B\in\mathscr{B}(S)$.
\item $P_{r,x}(X_{t}\in B|\mathscr{F}_{s})= P_{s,X_{s}}(X_{t}\in B)$ $P_{r,x}$-a.s.~for all $r,s,t\in [0,T]$ with $r\leq s\leq t$, each $x\in S$, and every $B\in\mathscr{B}(S)$.
\end{enumerate}
Hence, if $d_{S}$ is a product metric, then (i) reduces to $X_{r} = y$ for all $r\in [0,s]$ $P_{s,y}$-a.s.~for each $(s,y)\in [0,T]\times S$ and we recover the classical definition of a time-inhomogeneous Markov process with Borel measurable transition probabilities. Moreover, let $\mathscr{X}$ be \emph{progressive}, that is, $X$ is progressively measurable with respect to its natural filtration and its natural backward filtration. For example, this is the case if $X$ is left- or right-continuous. 

Whenever necessary, we will require that $\mathscr{X}$ is \emph{(right-hand) Feller}, which means that the function $[0,t]\times S\rightarrow\mathbb{R}$, $(r,x)\mapsto E_{r,x}[\varphi(X_{t})]$ is (right-)continuous for all $t\in [0,T]$ and each continuous $\varphi\in B_{b}(S,\mathbb{R})$. In this case, it follows that the map
\begin{equation}\label{Feller Condition}
[0,t]\times S\rightarrow\mathbb{R}^{k},\quad (r,x)\mapsto E_{r,x}\bigg[\int_{r}^{t}\varphi(s,X_{s})\,\mu(ds)\bigg]
\end{equation}
is (right-)continuous for each $t\in [0,T]$ and every $\mu$-dominated $\varphi\in B([0,t]\times S,\mathbb{R}^{k})$ for which $\varphi(s,\cdot)$ is continuous for $\mu$-a.e.~$s\in [0,t]$, by dominated convergence.

\begin{Example}
Let the setting of Example~\ref{Pseudometric Example} hold, then $X$ is the \emph{path process} of a process $Y:[0,T]\times\Omega\rightarrow\mathbb{R}^{d}$ in the sense that $X_{t} = Y^{t}$ for all $t\in [0,T]$ if and only if
\begin{equation}\label{Path Condition}
X_{s}(\omega)|[0,r] = X_{r}(\omega)\quad\text{for all $r,s\in [0,T]$}
\end{equation}
with $r\leq s$ and each $\omega\in\Omega$. In this case, $Y$ is uniquely determined, $(\mathscr{F}_{t})_{t\in [0,T]}$-adapted, and continuous. Further, (i) is equivalent to $Y^{s} = y^{s}$ $P_{s,y}$-a.s.~for all $(s,y)\in [0,T]\times S$.

The class of non-anticipative progressive Markov processes $\mathscr{X}$ fulfilling condition \eqref{Path Condition} is used in~\cite{Kalinin} to construct path-dependent diffusion processes, which extend standard Markovian diffusions in the context of semilinear parabolic PPDEs. In particular, conditions granting the (right-hand) Feller property of $\mathscr{X}$ are provided there.
\end{Example}
 
\subsection{The Markovian terminal value problem}\label{The Markovian terminal value problem}

We let $D\in\mathscr{B}(\mathbb{R}^{k})$ have non-empty interior, $f:[0,T]\times S\times D\rightarrow\mathbb{R}^{k}$ be measurable with respect to $\mathscr{B}([0,T]\times S)\otimes \mathscr{B}(D)$, and $g\in B(S,D)$ be consistent in the sense that $g(x)=g(y)$ for all $x,y\in S$ with $d_{S}((T,x),(T,y))=0$. Let us assume initially that
\begin{equation*}
E_{r,x}[|g(X_{T})|] < \infty \quad \text{for all $(r,x)\in [0,T]\times S$.}
\end{equation*}
Further, we let $\varepsilon\in B([0,T]\times S,\mathbb{R}_{+})$ be $\mu$-dominated and define an interval $I$ in $[0,T]$ to be \emph{admissible} if it is of the form $I= (t,T]$ or $I=[t,T]$ for some $t\in [0,T)$. This allows us to introduce the \emph{Markovian terminal value problem} \eqref{MIE}, by defining $\varepsilon$-approximate solutions.

\begin{Definition}
An \emph{$\varepsilon$-approximate solution} to \eqref{MIE} on an admissible interval $I$ is a consistent map $u\in B(I\times S,D)$  for which both $|u(t,X_{t})|$ and $\int_{r}^{t}|f(s,X_{s},u(s,X_{s}))|\,\mu(ds)$ are finite and $P_{r,x}$-integrable such that
\[
\bigg|E_{r,x}[u(t,X_{t})] - u(r,x) - E_{r,x}\bigg[\int_{r}^{t}f(s,X_{s},u(s,X_{s}))\,\mu(ds)\bigg]\bigg| \leq E_{r,x}\bigg[\int_{r}^{t}\varepsilon(s,X_{s})\,\mu(ds)\bigg]
\]
and $u(T,x) = g(x)$ for all $r,t\in I$ with $r\leq t$ and each $x\in S$. Every $0$-approximate solution is called a \emph{solution}. If in addition $I=[0,T]$, then we will speak about a \emph{global} solution.
\end{Definition}

For each admissible interval $I$, it follows from the Markov property of $\mathscr{X}$ that a map $u\in B(I\times S,D)$ is a solution to \eqref{MIE} on $I$ if and only if $\int_{r}^{T}|f(s,X_{s},u(s,X_{s}))|\,\mu(ds)$ is finite and $P_{r,x}$-integrable such that
\[
u(r,x) = E_{r,x}[g(X_{T})] - E_{r,x}\bigg[\int_{r}^{T}f(s,X_{s},u(s,X_{s}))\,\mu(ds)\bigg]
\]
for all $(r,x)\in I\times S$. Note that $u$ is automatically consistent, as soon as these two conditions are valid. For our main results, we introduce admissibility and non-extendibility of solutions.
\begin{Definition}
Assume that $u$ is a solution to \eqref{MIE} on an admissible interval $I$.
\begin{enumerate}[(i)]
\item We say that $u$ is \emph{$\mu$-admissible} if for each $r\in I$ there is a $\mu$-null set $N\in\mathscr{B}([0,T])$ such that $u((N^{c}\cap [r,T])\times S)$ is relatively compact in $D$. Moreover, $u$ is called \emph{admissible} if $u([r,T]\times S)$ is in fact relatively compact in $D^{\circ}$ for all $r\in I$.
\item Let $u$ be an admissible solution to \eqref{MIE} on $I$. Then we call $u$ \emph{extendible} if there is another admissible solution $\tilde{u}$ to \eqref{MIE} on some admissible interval $\tilde{I}$ with $I\subsetneq\tilde{I}$ and $u=\tilde{u}$ on $I\times S$. Otherwise, $u$ is \emph{non-extendible} and $I$ is called a \emph{maximal interval of existence}.
\end{enumerate}
\end{Definition}

\subsection{The main results}\label{The main results}

We begin with non-extendibility and assume until the end of the paper that $g$ is bounded, as this requirement is necessary for an admissible solution to exist.

\begin{Theorem}\label{Non-Extendibility Theorem}
Let $D$ be convex, $f\in BC_{\mu}^{1-}([0,T]\times S\times D,\mathbb{R}^{k})$, and $g$ be bounded away from $\partial D$. Then there is a unique non-extendible admissible solution $u_{g}$ to \eqref{MIE} on a maximal interval of existence $I_{g}$ that is open in $[0,T]$. With $t_{g}^{-}:= \inf I_{g}$ either $I_{g} = [0,T]$ or
\begin{equation}\label{Boundary and Growth Condition}\tag{B}
\lim_{t\downarrow t_{g}^{-}}\inf_{x\in S}\min\left\{\mathrm{dist}(u_{g}(t,x),\partial D),\frac{1}{1 + |u_{g}(t,x)|}\right\} =0.
\end{equation}
Moreover, if $\mathscr{X}$ is (right-hand) Feller, $f(s,\cdot,\cdot)$ is continuous for $\mu$-a.e.~$s\in [0,T]$, and $g$ is continuous, then $u_{g}$ is (right-)continuous.
\end{Theorem}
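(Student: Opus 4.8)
The plan is to build the non-extendible admissible solution by the standard ODE-type bootstrap: local existence, a gluing argument, a maximality extraction, and then a blow-up dichotomy. For \emph{local existence}, the hypotheses $f\in BC_{\mu}^{1-}$ and $g$ bounded away from $\partial D$ put us exactly in the situation handled in Section~\ref{Local existence in time}: since $g(S)$ is relatively compact in $D^{\circ}$, pick a convex neighborhood $W$ of $\overline{g(S)}$ whose closure lies in $D^{\circ}$ on which $f$ is Lipschitz $\mu$-continuous and $\mu$-bounded, and run a Picard iteration in $B_{b}([t_{0},T]\times S, W)$ using the fixed-point form $u(r,x)=E_{r,x}[g(X_{T})] - E_{r,x}[\int_{r}^{T} f(s,X_{s},u(s,X_{s}))\,\mu(ds)]$; the contraction constant is controlled by $\int_{t_{0}}^{T}\lambda\circ(\,\cdot\,)\,d\mu$-type quantities, which can be made $<1$ for $t_{0}$ close to $T$ because $\mu$ is atomless and $\lambda$ is $\mu$-integrable. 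This yields an admissible solution on some $[t_{0},T]$, and uniqueness on overlapping intervals follows from the Lipschitz $\mu$-continuity via a Gronwall argument (again from Section~\ref{Comparison, stability, and growth behavior of solutions}).

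Next I would pass to the \emph{maximal solution}. Let $I_{g}$ be the union of all admissible intervals $I$ carrying an admissible solution; uniqueness guarantees that any two such solutions agree on the intersection, so they glue to a single admissible solution $u_{g}$ on $I_{g}$, and $I_{g}$ is an interval of the form $(t_{g}^{-},T]$ or $[t_{g}^{-},T]$ with $t_{g}^{-}=\inf I_{g}$. By construction $u_{g}$ is non-extendible. To see $I_{g}$ is \emph{open} in $[0,T]$ (i.e. $t_{g}^{-}\notin I_{g}$ unless $t_{g}^{-}=0$... actually $I_g=(t_g^-,T]$ whenever $t_g^->0$): suppose $t_{g}^{-}>0$ and $t_{g}^{-}\in I_{g}$. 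Then $u_{g}(t_{g}^{-},\cdot)$ takes values in $D^{\circ}$ with relatively compact range there (admissibility at $r=t_{g}^{-}$), so the local existence step applied with terminal-type data $u_{g}(t_{g}^{-},\cdot)$ at time $t_{g}^{-}$ and the Markov/gluing property produces an admissible solution on $[t_{g}^{-}-\delta,T]$ for some $\delta>0$, contradicting maximality. Hence $I_{g}$ is open.

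The \emph{dichotomy} \eqref{Boundary and Growth Condition} is then the real content. Assume $I_{g}\neq[0,T]$, so $t_{g}^{-}>0$, and suppose for contradiction that
\[
\liminf_{t\downarrow t_{g}^{-}}\ \inf_{x\in S}\min\Bigl\{\mathrm{dist}(u_{g}(t,x),\partial D),\ \frac{1}{1+|u_{g}(t,x)|}\Bigr\} = c > 0.
\]
Then there is $t_{1}\in(t_{g}^{-},T)$ such that for all $t\in(t_{g}^{-},t_{1}]$ and all $x$, $u_{g}(t,x)$ stays in the fixed compact convex set $K:=\{w\in D: \mathrm{dist}(w,\partial D)\ge c/2,\ |w|\le 2/c\}\subset D^{\circ}$. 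On $K$, $f$ is Lipschitz $\mu$-continuous and $\mu$-bounded (local hypotheses plus compactness covered by finitely many neighborhoods), so the growth/stability estimates of Section~\ref{Comparison, stability, and growth behavior of solutions} give that $t\mapsto u_{g}(t,\cdot)$ is uniformly "$\mu$-Cauchy" as $t\downarrow t_{g}^{-}$: concretely $\sup_{x}|u_{g}(t,x)-u_{g}(t',x)|\le \int_{t}^{t'}\overline{f_{K}}\,d\mu + (\text{Lipschitz correction})\to 0$ since $\mu(\{t_{g}^{-}\})=0$. Thus $u_{g}(t,\cdot)$ converges uniformly to some $g_{1}\in B_{b}(S,K)$ with relatively compact range in $D^{\circ}$. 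Defining $u_{g}(t_{g}^{-},\cdot):=g_{1}$ extends $u_{g}$ to an admissible solution on $[t_{g}^{-},T]$ — the integral equation passes to the limit by dominated convergence, using that $f(s,\cdot,\cdot)$ is continuous $\mu$-a.e.\ only for the continuity addendum, but for the equation itself the $\mu$-boundedness on $K$ suffices — and then the openness argument extends it strictly past $t_{g}^{-}$, contradicting maximality. Hence $c=0$, which is \eqref{Boundary and Growth Condition}. I expect this uniform-convergence-at-the-endpoint step to be the main obstacle: one must carefully separate the roles of $\mathrm{dist}(\cdot,\partial D)$ (keeping us in the Lipschitz region) and $1/(1+|\cdot|)$ (keeping the range bounded), and invoke the precise form of the a priori bounds from Section~\ref{Comparison, stability, and growth behavior of solutions}.

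Finally, the \emph{continuity} claim: assume $\mathscr{X}$ is (right-hand) Feller, $f(s,\cdot,\cdot)$ continuous for $\mu$-a.e.\ $s$, and $g$ continuous. On each $[r,T]\subset I_{g}$, $u_{g}([r,T]\times S)$ is relatively compact in $D^{\circ}$, so $f$ restricted to $[0,T]\times S\times \overline{u_{g}([r,T]\times S)}$ is Lipschitz $\mu$-continuous, $\mu$-bounded, and has $f(s,\cdot,\cdot)$ continuous $\mu$-a.e. Then the Picard iteration $u^{(0)}:=E_{\cdot,\cdot}[g(X_{T})]$, $u^{(n+1)}(r,x):=E_{r,x}[g(X_{T})]-E_{r,x}[\int_{r}^{T}f(s,X_{s},u^{(n)}(s,X_{s}))\,\mu(ds)]$ converges uniformly on $[r,T]\times S$ to $u_{g}$ (by the contraction estimate, possibly after subdividing $[r,T]$ into finitely many subintervals on each of which the Lipschitz constant integrates to $<1$ and gluing). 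Each $u^{(n)}$ is (right-)continuous: $u^{(0)}$ by the Feller property applied to the continuous bounded $g$, and the inductive step by the (right-)continuity of the map in \eqref{Feller Condition} applied to $\varphi(s,y):=f(s,y,u^{(n)}(s,y))$, which is $\mu$-dominated and satisfies $\varphi(s,\cdot)$ continuous for $\mu$-a.e.\ $s$ because $u^{(n)}(s,\cdot)$ is continuous and $f(s,\cdot,\cdot)$ is. A uniform limit of (right-)continuous maps is (right-)continuous, so $u_{g}$ is (right-)continuous on each $[r,T]$, hence on all of $I_{g}$.
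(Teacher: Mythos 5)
Your overall architecture (local existence by a Picard/fixed-point scheme, gluing by uniqueness, openness by re-solving backward from $u_{g}(t_{g}^{-},\cdot)$, a blow-up dichotomy by contradiction, and continuity via uniform approximation by continuous iterates) matches the paper's, and the local existence, gluing, and openness parts are essentially sound. The genuine gap is in the dichotomy step. First, you negate \eqref{Boundary and Growth Condition} incorrectly: for the nonnegative quantity $h(t):=\inf_{x\in S}\min\{\mathrm{dist}(u_{g}(t,x),\partial D),1/(1+|u_{g}(t,x)|)\}$, the negation of $\lim_{t\downarrow t_{g}^{-}}h(t)=0$ is $\limsup_{t\downarrow t_{g}^{-}}h(t)>0$, i.e.\ the existence of \emph{some} sequence $t_{n}\downarrow t_{g}^{-}$ with $h(t_{n})\geq 2\varepsilon$; it is not $\liminf_{t\downarrow t_{g}^{-}}h(t)>0$. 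Your argument only excludes the latter and so proves a strictly weaker statement: it does not cover the case $\liminf=0<\limsup$. The missing ingredient is the propagation step (the paper's condition \eqref{Boundary Contradiction}): starting from $u_{g}(t_{n},S)\subset D_{2\varepsilon}:=\{w\in D\mid \mathrm{dist}(w,\partial D)\geq 2\varepsilon,\ |w|\leq 1/(2\varepsilon)\}$, one shows via the a priori bound $\sup_{x\in S}E_{r,x}[\int_{r}^{t}a(s,X_{s})\,\mu(ds)]<\varepsilon$ for $t-r<\delta$, together with a supremum/contradiction argument, that $u_{g}(t,S)$ remains relatively compact in $D_{\varepsilon}^{\circ}$ for \emph{all} $t$ in a left neighborhood of $t_{n}$ of uniform length $\delta$, hence on all of $(t_{g}^{-},t_{n_{0}}]$. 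Without this, control along the sequence does not yield the fixed compact set $K$ on a full interval that the rest of your argument requires.

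Second, your extraction of the terminal value at $t_{g}^{-}$ is flawed: the claimed estimate $\sup_{x}|u_{g}(t,x)-u_{g}(t',x)|\leq\int_{t'}^{t}\overline{f_{K}}\,d\mu+\cdots$ does not follow from the integral equation, because $u_{g}(t,x)$ and $u_{g}(t',x)$ are expectations under the \emph{different} measures $P_{t,x}$ and $P_{t',x}$; the difference contains the term $E_{t',x}[u_{g}(t,X_{t})]-u_{g}(t,x)$, which is not small in $t-t'$. The correct object is $E_{t_{g}^{-},x}[u_{g}(t,X_{t})]$: by the Markov property its increments equal $-E_{t_{g}^{-},x}[\int_{r}^{t}f(s,X_{s},u_{g}(s,X_{s}))\,\mu(ds)]$, which is uniformly small, so $\hat{w}(x):=\lim_{t\downarrow t_{g}^{-}}E_{t_{g}^{-},x}[u_{g}(t,X_{t})]$ exists uniformly in $x$ and serves as the value of the extension at $t_{g}^{-}$. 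A smaller, fixable point in the continuity step: your Picard iterates on $[r,T]$ need not stay in the neighborhood of $\overline{u_{g}([r,T]\times S)}$ on which $f$ is Lipschitz $\mu$-continuous, so one must either extend $f$ Lipschitz-continuously off that set before iterating, or, as the paper does, run the maximality/openness argument within the class of (right-)continuous admissible solutions and close the gap at the left endpoint via the integral representation and \eqref{Feller Condition}.
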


Let us for the moment assume that the hypotheses of the theorem hold. If $u_{g}$ is bounded away from $\partial D$, that is, if $\mathrm{dist}(u_{g}(t,x),\partial D)$ $\geq \varepsilon$ for all $(t,x)\in I_{g}\times S$ and some $\varepsilon > 0$, and $I_{g}\neq [0,T]$, then from \eqref{Boundary and Growth Condition} it follows that
\begin{equation*}
\lim_{t\downarrow t_{g}^{-}}\sup_{x\in S}|u_{g}(t,x)| = \infty.
\end{equation*}
Let us instead suppose that $u_{g}$ is bounded. For instance, this occurs whenever $f$ is affine $\mu$-bounded, by Lemma~\ref{Growth Lemma}. Then the theorem says that either $u_{g}$ is a global solution or
\begin{equation}\label{Boundary Condition}
\lim_{t\downarrow t_{g}^{-}}\inf_{x\in S}\mathrm{dist}(u_{g}(t,x),\partial D)=0.
\end{equation}
In particular, if $u_{g}$ is not only bounded, but also its image $u_{g}(I_{g}\times S)$ is relatively compact in $D^{\circ}$, then $I_{g} = [0,T]$. In the case $D=\mathbb{R}^{k}$, we combine these considerations with a Picard iteration to obtain the following result, which just requires local Lipschitz $\mu$-continuity of $f$.

\begin{Proposition}\label{Global Existence and Approximation Proposition}
Let $D=\mathbb{R}^{k}$ and $f\in BC_{\mu}^{1-}([0,T]\times S\times\mathbb{R}^{k},\mathbb{R}^{k})$. Assume that $f$ is affine $\mu$-bounded, then $I_{g} = [0,T]$ and the sequence $(u_{n})_{n\in\mathbb{N}_{0}}$ in $B_{b}([0,T]\times S,\mathbb{R}^{k})$, defined recursively by $u_{0}(r,x)$ $:=E_{r,x}[g(X_{T})]$ and
\[
u_{n}(r,x):= u_{0}(r,x) - E_{r,x}\bigg[\int_{r}^{T}f(s,X_{s},u_{n-1}(s,X_{s}))\,\mu(ds)\bigg]
\]
for all $n\in\mathbb{N}$, converges uniformly to $u_{g}$, the unique global bounded solution to \eqref{MIE}.
\end{Proposition}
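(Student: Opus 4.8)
The plan is to build the global bounded solution directly from the stated Picard scheme and then to identify it with $u_g$ by means of the uniqueness statement in Theorem~\ref{Non-Extendibility Theorem}. First I would check that the recursion is well posed and that the iterates stay in one fixed ball. Since $g$ is bounded, $u_0(r,x)=E_{r,x}[g(X_T)]$ satisfies $\|u_0\|_\infty\le\|g\|_\infty$ and is Borel measurable because the transition probabilities of $\mathscr X$ are Borel measurable. Inductively, if $u_{n-1}\in B_b([0,T]\times S,\mathbb R^k)$, the affine $\mu$-bound $|f(s,\cdot,w)|\le a(s,\cdot)+b(s,\cdot)|w|$ with $\mu$-dominated $a,b$ makes $\int_r^T|f(s,X_s,u_{n-1}(s,X_s))|\,\mu(ds)$ finite and $P_{r,x}$-integrable, so $u_n$ is again a well-defined element of $B_b([0,T]\times S,\mathbb R^k)$. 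Writing $v_n(r):=\sup_{x\in S}|u_n(r,x)|$, dominating $a(s,X_s)\le\overline a(s)$ and $b(s,X_s)\le\overline b(s)$ off a $\mu$-null set, and using $|u_{n-1}(s,X_s)|\le v_{n-1}(s)$, one obtains
\[
v_n(r)\le\|g\|_\infty+\int_0^T\overline a\,d\mu+\int_r^T\overline b(s)\,v_{n-1}(s)\,\mu(ds),
\]
and a Grönwall iteration based on the identity $\int_r^T\overline b(s)\bigl(\int_s^T\overline b\,d\mu\bigr)^{j}\mu(ds)=\frac{1}{j+1}\bigl(\int_r^T\overline b\,d\mu\bigr)^{j+1}$, valid since $\mu$ and hence $\overline b\cdot\mu$ is atomless, yields a bound $\|u_n\|_\infty\le M$ uniform in $n$.

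Next I would convert the merely local Lipschitz $\mu$-continuity of $f$ into a genuine contraction estimate on $\overline B(0,M)$. Since $D=\mathbb R^k$, this ball is compact; cover it by finitely many open neighborhoods on each of which $f$ is Lipschitz $\mu$-continuous, pick a Lebesgue number $\delta>0$ of this cover, and for $w,w'\in\overline B(0,M)$ subdivide the segment $[w,w']$ (which lies in $\overline B(0,M)$ by convexity) into pieces of length $\le\delta$; summing the Lipschitz estimates along the pieces produces a $\mu$-dominated $\lambda$ with $|f(t,x,w)-f(t,x,w')|\le\lambda(t,x)|w-w'|$ for all $(t,x)$ and all $w,w'\in\overline B(0,M)$. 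Since every iterate takes values in $\overline B(0,M)$, choosing a $\mu$-integrable $\overline\lambda$ with $\lambda(s,y)\le\overline\lambda(s)$ off a $\mu$-null set gives
\[
\sup_{x\in S}|u_{n+1}(r,x)-u_n(r,x)|\le\int_r^T\overline\lambda(s)\,\sup_{x\in S}|u_n(s,x)-u_{n-1}(s,x)|\,\mu(ds)\qquad(n\ge1),
\]
while $\sup_{x}|u_1(r,x)-u_0(r,x)|\le\int_0^T\overline a\,d\mu+M\int_0^T\overline b\,d\mu=:C$. The same Grönwall identity gives $\|u_{n+1}-u_n\|_\infty\le C\Lambda^n/n!$ with $\Lambda:=\int_0^T\overline\lambda\,d\mu$, so $(u_n)_{n\in\mathbb N_0}$ is uniformly Cauchy and converges uniformly to some $u\in B_b([0,T]\times S,\overline B(0,M))$.

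Then I would pass to the limit and identify $u$. From $|f(s,X_s,u_{n-1}(s,X_s))-f(s,X_s,u(s,X_s))|\le\overline\lambda(s)\|u_{n-1}-u\|_\infty$ one passes to the limit in the recursion to get $u(r,x)=E_{r,x}[g(X_T)]-E_{r,x}[\int_r^Tf(s,X_s,u(s,X_s))\,\mu(ds)]$ for all $(r,x)$; by the reformulation of~\eqref{MIE} recorded after its definition, $u$ is a global solution, it is automatically consistent, and since it is bounded and $D^\circ=\mathbb R^k$, each image $u([r,T]\times S)$ is relatively compact in $D^\circ$, so $u$ is an admissible global solution. Because $\partial D=\emptyset$, Theorem~\ref{Non-Extendibility Theorem} applies (its hypothesis on $g$ being void) and supplies a unique \emph{non-extendible} admissible solution $u_g$ on a maximal interval $I_g$; our $u$, being defined on all of $[0,T]$, is non-extendible, hence $u=u_g$ and $I_g=[0,T]$. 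The same observation shows that every bounded global solution is admissible, hence non-extendible, and therefore equals $u_g$; thus $u_g$ is the unique global bounded solution and $u_n\to u_g$ uniformly.

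The main obstacle is the a priori bound of the first step: one needs an estimate on $\|u_n\|_\infty$ that does not deteriorate as $n\to\infty$ — this is precisely where affine $\mu$-boundedness of $f$ enters — and only once the iterates are trapped in a fixed ball can the purely local Lipschitz $\mu$-continuity of $f$ be turned into the contraction-type estimate driving the convergence. The atomless-measure Grönwall identity (equivalently, switching to a Bielecki-type weighted sup-norm) is the technical device that makes both the boundedness and the convergence go through; everything else is a routine Picard argument combined with the uniqueness part of Theorem~\ref{Non-Extendibility Theorem}.
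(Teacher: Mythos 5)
Your proof is correct and follows essentially the same route as the paper: an a priori bound on the iterates coming from affine $\mu$-boundedness, an upgrade of local Lipschitz $\mu$-continuity to a single $\mu$-dominated Lipschitz bound on the resulting closed ball, a factorial Gr\"onwall estimate making the Picard sequence uniformly Cauchy, and identification of the limit with $u_{g}$ via the uniqueness part of Theorem~\ref{Non-Extendibility Theorem}. The paper merely packages these steps differently (it first obtains $I_{g}=[0,T]$ from Lemma~\ref{Growth Lemma} together with Theorem~\ref{Non-Extendibility Theorem} and then applies Lemma~\ref{Fixed-Point Lemma} on the invariant set $\mathscr{H}$ defined by \eqref{Growth Estimate}); the only point to tighten in your version is to run the inductions with the explicit measurable majorants rather than the pointwise suprema $v_{n}(s)=\sup_{x\in S}|u_{n}(s,x)|$, whose Borel measurability in $s$ is not guaranteed.
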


Let us at this place assume that $D=\mathbb{R}^{k}$ and $f$ is an affine map in $w\in\mathbb{R}^{k}$. In other words, there are two maps $a:[0,T]\times S\rightarrow\mathbb{R}^{k}$ and $b:[0,T]\times S\rightarrow\mathbb{R}^{k\times k}$ such that
\[
f(t,x,w) = a(t,x) + b(t,x)w
\]
for all $(t,x,w)\in [0,T]\times S\times\mathbb{R}^{k}$. As $a$ and $b$ are necessarily Borel measurable, we infer from Examples~\ref{Borel Regularity Examples} that $f$ is affine $\mu$-bounded and Lipschitz $\mu$-continuous as soon as $a$ and $b$ are $\mu$-dominated. Thus, we get a multidimensional Feynman-Kac formula, which for $k=1$ follows from Dynkin~\cite[Theorem 4.1.1]{DynkinBran} provided $a=0$ and $b\geq 0$.

\begin{Proposition}\label{Linear Proposition}
Let $D=\mathbb{R}^{k}$ and suppose that $f(t,x,w) = a(t,x) + b(t,x)w$ for every $(t,x,w)\in [0,T]\times S\times\mathbb{R}^{k}$ and some $\mu$-dominated $a\in B([0,T]\times S,\mathbb{R}^{k})$ and $b\in B([0,T]\times S,\mathbb{R}^{k\times k})$. Then $I_{g} = [0,T]$ and
\begin{equation}\label{Linear Representation Formula}
u_{g}(r,x) = E_{r,x}[\Sigma_{r,T}g(X_{T})]- E_{r,x}\bigg[\int_{r}^{T}\Sigma_{r,t}a(t,X_{t})\,\mu(dt)\bigg]
\end{equation}
for all $(r,x)\in [0,T]\times S$ and some map $\Sigma:[0,T]\times [0,T]\times\Omega\rightarrow\mathbb{R}^{k\times k}$, $(r,t,\omega)\mapsto\Sigma_{r,t}(\omega)$ with the following three properties:
\begin{enumerate}[(i)]
\item $\Sigma_{r,t}$ is $\sigma(X_{s}:s\in [r,t])$-measurable, $|\Sigma_{r,t}|\leq \sqrt{k}e^{\int_{r}^{t}|b(s,X_{s})|\,\mu(ds)}$, and $\Sigma(\omega)$ is continuous for all $r,t\in [0,T]$ with $r\leq t$ and each $\omega\in\Omega$.
\item $\Sigma_{r,r} = \mathbbm{I}_{k}$, $\Sigma_{r,s}\Sigma_{s,t} = \Sigma_{r,t}$, and $\Sigma_{r,t}(\omega)$ is an invertible matrix with $\Sigma_{r,t}(\omega)^{-1} = \Sigma_{t,r}(\omega)$ for all $r,s,t\in [0,T]$ and every $\omega\in\Omega$.
\item If $b(r,x)b(s,y) = b(s,y)b(r,x)$ for all $(r,x),(s,y)\in [0,T]\times S$, then $\Sigma_{r,t} = e^{-\int_{r}^{t}b(s,X_{s})\,\mu(ds)}$ for all $r,t\in [0,T]$ with $r\leq t$.
\end{enumerate}
\end{Proposition}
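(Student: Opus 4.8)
The plan is to construct the matrix field $\Sigma$ pathwise as the solution of a linear integral equation driven by $\mu$, to read off its properties (i)--(iii) from linear ODE theory adapted to $\mu$, and finally to identify the right-hand side of \eqref{Linear Representation Formula} as a global bounded solution of \eqref{MIE}, which by uniqueness must coincide with $u_{g}$. For the last point, observe first that $a$ and $b$ are necessarily Borel measurable and $\mu$-dominated, so Examples~\ref{Borel Regularity Examples} show that $f(t,x,w) = a(t,x) + b(t,x)w$ is affine $\mu$-bounded and Lipschitz $\mu$-continuous, hence belongs to $BC_{\mu}^{1-}([0,T]\times S\times\mathbb{R}^{k},\mathbb{R}^{k})$; thus Proposition~\ref{Global Existence and Approximation Proposition} already yields $I_{g} = [0,T]$ and that $u_{g}$ is the unique global bounded solution. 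It therefore remains to produce $\Sigma$ and to verify \eqref{Linear Representation Formula}.

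To build $\Sigma$, fix $\omega\in\Omega$ and $r\in[0,T]$, and let $\overline{b}\in B([0,T],\mathbb{R}_{+})$ be a $\mu$-integrable dominating function for $b$, so that $|b(s,X_{s}(\omega))|\leq\overline{b}(s)$ for $s$ off a fixed $\mu$-null set. I would define $t\mapsto\Sigma_{r,t}(\omega)$ on $[r,T]$ to be the unique solution of the linear Volterra equation $\Sigma_{r,t} = \mathbbm{I}_{k} - \int_{r}^{t}\Sigma_{r,s}\,b(s,X_{s})\,\mu(ds)$, obtained from a Picard iteration whose consecutive differences are bounded in norm by $\sqrt{k}\,(\int_{r}^{t}\overline{b}\,\mu)^{n}/n!$, so that existence and uniqueness are immediate. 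Grönwall's inequality then gives the bound in (i), atomlessness of $\mu$ gives the continuity of $\Sigma(\omega)$, and the Picard scheme together with the (forward and backward) progressive measurability of $X$ gives the joint measurability of $(r,t,\omega)\mapsto\Sigma_{r,t}(\omega)$ as well as the $\sigma(X_{s}:s\in[r,t])$-measurability of $\Sigma_{r,t}$. The cocycle identity $\Sigma_{r,s}\Sigma_{s,t} = \Sigma_{r,t}$ for $r\leq s\leq t$ follows from uniqueness, since $\tau\mapsto\Sigma_{r,\tau}$ and $\tau\mapsto\Sigma_{r,s}\Sigma_{s,\tau}$ solve the same equation on $[s,T]$ and agree at $\tau = s$. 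For the invertibility, I would introduce the adjoint equation $\Xi_{t} = \mathbbm{I}_{k} + \int_{r}^{t}b(s,X_{s})\,\Xi_{s}\,\mu(ds)$; because $\mu$ is atomless, the Lebesgue--Stieltjes product rule carries no correction term, which makes $\Sigma_{r,t}\Xi_{t} = \mathbbm{I}_{k}$ (equality at $t = r$, vanishing $\mu$-derivative), so $\Sigma_{r,t}$ is invertible with $\Sigma_{r,t}^{-1} = \Xi_{t}$. Setting $\Sigma_{t,r} := \Xi_{t}$ for $t > r$ --- still $\sigma(X_{s}:s\in[r,t])$-measurable --- extends (ii) to arbitrary indices after a short case distinction, and since in the commuting case $e^{-\int_{r}^{t}b(s,X_{s})\,\mu(ds)}$ solves the defining equation, uniqueness yields (iii).

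For the representation, set $v(s,y) := E_{s,y}[\Sigma_{s,T}g(X_{T})] - E_{s,y}[\int_{s}^{T}\Sigma_{s,t}a(t,X_{t})\,\mu(dt)]$, which is bounded by virtue of (i), the boundedness of $g$, and the $\mu$-domination of $a$, and which satisfies $v(T,\cdot) = g$. I would then prove that $v$ solves \eqref{MIE} on $[0,T]$ in the equivalent integral form, whence $v = u_{g}$. The algebraic key is the identity $\int_{s}^{t}b(\tau,X_{\tau})\Sigma_{\tau,t}\,\mu(d\tau) = \mathbbm{I}_{k} - \Sigma_{s,t}$ for $s\leq t$, which one obtains by combining the cocycle identity with the adjoint equation satisfied by $\tau\mapsto\Sigma_{\tau,t}$. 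Using it with $s = r$ to write $\mathbbm{I}_{k} - \Sigma_{r,T} = \int_{r}^{T}b(s,X_{s})\Sigma_{s,T}\,\mu(ds)$, invoking the Markov property of $\mathscr{X}$ to replace $E_{r,x}[\,\cdot\mid\mathscr{F}_{s}]$ by $E_{s,X_{s}}[\,\cdot\,]$ inside the resulting integrand (legitimate since $\Sigma_{s,T}g(X_{T})$ is a functional of $(X_{\tau})_{\tau\in[s,T]}$), applying Fubini's theorem --- all integrands being dominated by constants times $\mu$-integrable functions --- and finally interchanging the order of the ensuing double $\mu$-integral so as to apply the identity once more with the inner variable over $[r,t]$, one is led exactly to $v(r,x) = E_{r,x}[g(X_{T})] - E_{r,x}[\int_{r}^{T}(a(s,X_{s}) + b(s,X_{s})v(s,X_{s}))\,\mu(ds)]$, as needed. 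The step I expect to be the main obstacle is the middle one: establishing existence, uniqueness, regularity, and the two measurability statements for the pathwise $\mu$-driven linear equation and its adjoint uniformly over $(r,t,\omega)$, and deriving the cocycle and invertibility identities, for which the atomlessness of $\mu$ (a jump-free Lebesgue--Stieltjes product rule) is essential. The representation itself is then a mechanical, if somewhat lengthy, Fubini-and-Markov computation once the identity $\int_{s}^{t}b(\tau,X_{\tau})\Sigma_{\tau,t}\,\mu(d\tau) = \mathbbm{I}_{k} - \Sigma_{s,t}$ is secured.
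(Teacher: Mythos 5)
Your argument is correct, and while it produces the same object $\Sigma$ (your Picard iteration for the Volterra equation generates exactly the Peano--Baker series $\sum_{n}(-1)^{n}\Sigma^{(n)}$ of the paper's Lemma~\ref{Linear Operator Lemma}), the route is genuinely different in two places. First, you derive (ii) and (iii) from uniqueness of solutions to the pathwise linear equation and its adjoint (cocycle by matching two solutions at $\tau=s$; invertibility by differentiating $\Sigma_{r,t}\Xi_{t}$ with the jump-free product rule; the exponential formula because it solves the same equation in the commuting case), whereas the paper proves these identities term by term on the series: a Cauchy-product computation for $\Sigma_{r,s}\Sigma_{s,t}=\Sigma_{r,t}$ and an explicit symmetrization over permutations of the simplex, using that $\mu$ is atomless, to get $\Sigma^{(n)}_{r,t}=\frac{1}{n!}(\int_{r}^{t}b\,d\mu)^{n}$. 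Second, and more substantially, the paper obtains \eqref{Linear Representation Formula} essentially for free: it shows by induction that the $n$-th Picard iterate of Proposition~\ref{Global Existence and Approximation Proposition} equals the partial sum $E_{r,x}[\sum_{i=0}^{n}(-1)^{i}\Sigma^{(i)}_{r,T}g(X_{T})]-E_{r,x}[\int_{r}^{T}\sum_{i=0}^{n-1}(-1)^{i}\Sigma^{(i)}_{r,t}a(t,X_{t})\,\mu(dt)]$ and passes to the limit by dominated convergence, so no separate verification that the candidate formula solves \eqref{MIE} is needed. You instead verify the formula directly via the identity $\int_{s}^{t}b(\tau,X_{\tau})\Sigma_{\tau,t}\,\mu(d\tau)=\mathbbm{I}_{k}-\Sigma_{s,t}$, the Markov property, and Fubini, and then invoke uniqueness; this works (the identity does follow from the cocycle and the adjoint equation, since $\Sigma_{\tau,t}=\Xi_{\tau}\Sigma_{r,t}$), but it is the lengthier half of your proof and requires some care with the extension of the Markov property from indicators to the path functional $\Sigma_{s,T}g(X_{T})$, a step the paper's limit argument sidesteps entirely. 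In short: your construction buys cleaner proofs of (ii) and (iii) at the cost of a hands-on verification of the representation; the paper's series construction buys the representation at the cost of more combinatorial work on the coefficients.
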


Clearly, if there are a $\mu$-dominated $c\in B([0,T]\times S,\mathbb{R})$ and $B\in\mathbb{R}^{k\times k}$ such that the map $b$ in above proposition is of the form $b(t,x) = c(t,x) B$ for all $(t,x)\in [0,T]\times S$, then the commutation condition in (iii) holds. Hence, we may consider an example involving trigonometric functions.

\begin{Example}
Let $k=2$ and $a = 0$. Suppose that there are a $\mu$-dominated $c\in B([0,T]\times S,\mathbb{R})$ and $\delta,\varepsilon\in\mathbb{R}\backslash\{0\}$ such that
\[
b(r,x) = c(r,x)
\begin{pmatrix}
0 & \delta \\
\varepsilon & 0
\end{pmatrix}\quad\text{for all $(r,x)\in [0,T]\times S$.}
\]
We set $\rho:=1$, if $\delta\varepsilon > 0$, and $\rho := i\in\mathbb{C}$, otherwise. Then we can write $u_{g}$ in the form
\begin{align*}
(u_{g})_{1}(r,x) &= E_{r,x}\bigg[\cosh\bigg(-\rho\sqrt{|\delta\varepsilon|}\int_{r}^{T}c(s,X_{s})\,\mu(ds)\bigg)g_{1}(X_{T})\bigg]\\
&\quad + \rho\frac{\sqrt{|\delta\varepsilon|}}{\varepsilon}E_{r,x}\bigg[\sinh\bigg(-\rho\sqrt{|\delta\varepsilon|}\int_{r}^{T}c(s,X_{s})\,\mu(ds)\bigg)g_{2}(X_{T})\bigg],\\
(u_{g})_{2}(r,x) &= \rho\frac{\sqrt{|\delta\varepsilon|}}{\delta}E_{r,x}\bigg[\sinh\bigg(-\rho\sqrt{|\delta\varepsilon|}\int_{r}^{T}c(s,X_{s})\,\mu(ds)\bigg)g_{1}(X_{T})\bigg]\\
&\quad + E_{r,x}\bigg[\cosh\bigg(-\rho\sqrt{|\delta\varepsilon|}\int_{r}^{T}c(s,X_{s})\,\mu(ds)\bigg)g_{2}(X_{T})\bigg]
\end{align*}
for all $(r,x)\in [0,T]\times S$.
\end{Example}

Let us now restrict our attention to $k=1$. While Proposition~\ref{Global Existence and Approximation Proposition} covers the case $D=\mathbb{R}$, we can also derive global solutions if $D$ is a non-degenerate interval.

\begin{Theorem}\label{One-Dimensional Global Existence Theorem}
Let $D$ be a non-degenerate interval with $\underline{d}:=\inf D$ and $\overline{d}:=\sup D$. Assume that $f\in BC_{\mu}^{1-}([0,T]\times S\times D,\mathbb{R})$ and the following two conditions hold:
\begin{enumerate}[(i)]
\item Whenever $\underline{d} > - \infty$ (resp.~$\overline{d} < \infty$), then $f$ is both locally $\mu$-bounded and locally Lipschitz $\mu$-continuous at $\underline{d}$ (resp.~$\overline{d}$) with $\lim_{w\downarrow \underline{d}} f(\cdot,x,w) \leq 0$ (resp.~$\lim_{w\uparrow \overline{d}} f(\cdot,x,w)\geq 0$) for all $x\in S$ $\mu$-a.s.
\item If $\underline{d} = - \infty$ (resp.~$\overline{d} = \infty$), then $f$ is affine $\mu$-bounded from above (resp.~from below).
\end{enumerate}
Then there is a unique global bounded solution $\overline{u}_{g}$ to \eqref{MIE} that agrees with $u_{g}$ if $g$ is bounded away from $\{\underline{d},\overline{d}\}\cap\mathbb{R}$. Moreover, if $\mathscr{X}$ is (right-hand) Feller, $f(s,\cdot,\cdot)$ is continuous for $\mu$-a.e.~$s\in [0,T]$, and $g$ is continuous, then $\overline{u}_{g}$ is (right-)continuous.
\end{Theorem}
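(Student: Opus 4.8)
The goal is to reduce the one-dimensional problem with a possibly-bounded interval $D$ to the already-established non-extendibility result, Theorem \ref{Non-Extendibility Theorem}, by first truncating $g$ away from the endpoints and then passing to a limit. So the plan is, first, to prove the statement under the extra hypothesis that $g$ is bounded away from $\{\underline{d},\overline{d}\}\cap\mathbb{R}$: then Theorem \ref{Non-Extendibility Theorem} furnishes a unique non-extendible admissible solution $u_{g}$ on a maximal open interval $I_{g}$, and the crux is to show $I_{g}=[0,T]$ and that $u_{g}$ is bounded. For boundedness I would argue separately for each endpoint. Consider $\overline{d}<\infty$. Condition (i) gives a neighbourhood $W=[\,\overline{d}-\eta,\overline{d}\,]\cap D$ on which $f$ is Lipschitz $\mu$-continuous and $\mu$-bounded, and $\lim_{w\uparrow\overline{d}}f(\cdot,x,w)\ge 0$ $\mu$-a.e. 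The idea is a comparison/barrier argument: the constant map $\overline{d}$ (or rather $\overline{d}-\kappa$ for a suitable small constant) is a supersolution near the top, because feeding a value close to $\overline{d}$ into $f$ produces a nonnegative (up to a $\mu$-null correction) integrand, which in the integral equation $u(r,x)=E_{r,x}[g(X_T)]-E_{r,x}[\int_r^T f\,d\mu]$ pushes $u$ back down below $\overline{d}$. Dually, if $\underline{d}>-\infty$, the sign condition $\lim_{w\downarrow\underline{d}}f(\cdot,x,w)\le 0$ makes $\underline{d}$ a subsolution. When $\overline{d}=\infty$ one instead invokes hypothesis (ii): $f$ affine $\mu$-bounded from below together with the growth estimate from Lemma \ref{Growth Lemma} bounds $u_g$ from above over all of $I_g$; symmetrically for $\underline{d}=-\infty$. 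Combining the four cases, $u_g(I_g\times S)$ is contained in a compact subinterval $[\underline{d}+\varepsilon_0,\overline{d}-\varepsilon_0]\subset D^\circ$ (with the obvious modification when an endpoint is infinite), so by the remark following Theorem \ref{Non-Extendibility Theorem} — an admissible solution whose image is relatively compact in $D^\circ$ cannot satisfy \eqref{Boundary and Growth Condition} — we conclude $I_g=[0,T]$. This $u_g=:\overline{u}_g$ is then a global bounded solution, and it is the unique one among global bounded solutions by the uniqueness part of Theorem \ref{Non-Extendibility Theorem} (any global bounded solution is automatically admissible once we know, from the same barrier argument applied to an arbitrary global bounded solution, that it stays in a compact subset of $D^\circ$).

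Second, I would remove the hypothesis that $g$ is bounded away from $\{\underline{d},\overline{d}\}\cap\mathbb{R}$ by an approximation. Given an arbitrary bounded consistent $g\in B(S,D)$, pick a sequence of Borel measurable retractions $\pi_n:D\to D$ with $\pi_n(D)$ bounded away from the finite endpoints, $\pi_n\to\mathrm{id}_D$ pointwise, and $|\pi_n(w)-\pi_n(w')|\le|w-w'|$ (e.g. truncate to $[\underline{d}+1/n,\overline{d}-1/n]$ suitably shifted so it is nonempty, and when $D$ is unbounded also cap the absolute value at $n$). Set $g_n:=\pi_n\circ g$; each $g_n$ is bounded away from $\{\underline{d},\overline{d}\}\cap\mathbb{R}$, so Step 1 yields global bounded solutions $u_{g_n}$. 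The plan is to show $(u_{g_n})$ is Cauchy in the supremum norm on $[0,T]\times S$, uniformly, and that its limit $\overline{u}_g$ solves \eqref{MIE} with terminal value $g$. For the Cauchy property I would subtract the integral equations for $u_{g_n}$ and $u_{g_m}$, use Lipschitz $\mu$-continuity of $f$ on the common compact range established in Step 1 (crucially, the barrier bounds from Step 1 are uniform in $n$, since they depend only on $\mu$-integral data for $f$ near the endpoints, not on $g_n$), and close the estimate with a Gronwall-type inequality driven by the $\mu$-dominating function of $\lambda$ — this is the same mechanism as in the Picard argument behind Proposition \ref{Global Existence and Approximation Proposition}. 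Passing to the limit in the integral equation is then routine by dominated convergence (the integrands are dominated via local $\mu$-boundedness on the compact range), giving that $\overline{u}_g$ is a global bounded solution with terminal value $g$; uniqueness again follows from Theorem \ref{Non-Extendibility Theorem} because any global bounded solution lies in a compact subset of $D^\circ$. The asserted agreement with $u_g$ when $g$ is already bounded away from the endpoints is immediate, since then $g$ itself is admissible data and Step 1 applies directly.

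Third, the continuity statement. Assuming $\mathscr{X}$ is (right-hand) Feller, $f(s,\cdot,\cdot)$ continuous for $\mu$-a.e. $s$, and $g$ continuous, I would first note that each truncation $\pi_n$ is continuous, so $g_n$ is continuous, whence by the continuity clause of Theorem \ref{Non-Extendibility Theorem} each $u_{g_n}$ is (right-)continuous. Since $u_{g_n}\to\overline{u}_g$ uniformly on $[0,T]\times S$ from Step 2, the uniform limit of (right-)continuous maps is (right-)continuous, so $\overline{u}_g$ is (right-)continuous. One subtlety: if $g$ itself is bounded away from the endpoints, I should check that $\overline{u}_g=u_g$ is (right-)continuous directly from Theorem \ref{Non-Extendibility Theorem}, which it is, so the two routes agree.

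The main obstacle I anticipate is the barrier argument in Step 1 that keeps the solution away from the finite endpoints, i.e. converting the one-sided limit conditions $\lim_{w\downarrow\underline{d}}f(\cdot,x,w)\le 0$ and $\lim_{w\uparrow\overline{d}}f(\cdot,x,w)\ge 0$ into an honest a priori bound $\mathrm{dist}(u_g(t,x),\partial D)\ge\varepsilon_0$ valid on all of $I_g$. The delicate points are handling the $\mu$-null exceptional set on which the sign condition may fail (so that one works with $\mu$-a.e. inequalities inside the integral, which is harmless), and organizing the comparison at the level of the Markovian integral equation rather than a differential equation — presumably via the comparison results announced in Section \ref{Comparison, stability, and growth behavior of solutions}, applied to $u_g$ against the constant barrier, together with continuity of $u_g$ in $t$ (established in Theorem \ref{Non-Extendibility Theorem}) to run a standard "first exit time" contradiction if $u_g$ ever approached an endpoint. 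Once that a priori bound is in hand, everything else is a bookkeeping exercise with Gronwall and dominated convergence.
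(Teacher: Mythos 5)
Your overall architecture (truncate $g$ away from the finite endpoints, solve globally for the truncated data, pass to a uniform limit, get continuity from uniform convergence of continuous approximants) is the same as the paper's. But the key a priori estimate in your Step 1 rests on a false premise. You claim that "feeding a value close to $\overline{d}$ into $f$ produces a nonnegative integrand," so that the constant $\overline{d}-\kappa$ is a supersolution and a first-exit-time argument applies. Hypothesis (i) only asserts $\lim_{w\uparrow\overline{d}}f(\cdot,x,w)\geq 0$ \emph{at} the endpoint; $f(t,x,w)$ may be strictly negative for every $w<\overline{d}$ (e.g.\ $f(t,x,w)=w-\overline{d}$), in which case no constant level strictly inside $D$ is a supersolution and the solution genuinely drifts toward $\overline{d}$ as $t$ decreases. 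What actually prevents it from reaching $\overline{d}$ is not a barrier but a Gronwall-type exponential bound: one linearizes, writing $\overline{d}-u$ (resp.\ $u-\underline{d}$) as the solution of a \emph{linear} Markovian integral equation whose coefficient is the difference quotient $(\overline{f}(r,x,u(r,x))-\overline{f}(r,x,\underline{d}))/(u(r,x)-\underline{d})$, bounds that coefficient by a $\mu$-dominated function using local Lipschitz $\mu$-continuity \emph{at the endpoint} together with either $\mu$-boundedness from above or the growth bound from Lemma~\ref{One-Dimensional Growth Lemma}, and then invokes the Feynman--Kac representation of Lemma~\ref{Comparison Lemma 2} to get $u(r,x)-\underline{d}\geq c\,(E_{r,x}[g(X_T)]-\underline{d})$ with $c=e^{-\sup\int n\,d\mu}>0$ (this is Proposition~\ref{Boundary Proposition}). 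This multiplicative estimate, not a constant barrier, is the missing idea.

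Two further consequences of this. First, in your Step 2 the claim that "the barrier bounds from Step 1 are uniform in $n$," giving a common compact range in $D^{\circ}$ for the $u_{g_n}$, is false when $g$ itself approaches $\partial D$: the lower bound is $c\,\mathrm{dist}(g_n,\{\underline{d},\overline{d}\})\to 0$, so the ranges degenerate toward the boundary. The Cauchy/stability estimate must therefore be run for the extension $\overline{f}\in BC_{\mu}^{1-}([0,T]\times S\times[\underline{d},\overline{d}])$ (which exists precisely because of the local Lipschitz $\mu$-continuity at the endpoints in (i)), with the common compact set taken inside the \emph{closed} interval; one then needs a second application of the endpoint estimate to see that the limit $\overline{u}_g$ is still $D$-valued. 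Second, your uniqueness argument ("any global bounded solution is automatically admissible") does not work: admissibility requires the range to be relatively compact in $D^{\circ}$, which $\overline{u}_g$ fails whenever $g$ is not bounded away from $\partial D$ (already $\overline{u}_g(T,\cdot)=g$). Uniqueness among global bounded solutions again has to be argued through the extended problem on $[\underline{d},\overline{d}]$ (or directly via Lemma~\ref{Comparison Lemma}), not through Theorem~\ref{Non-Extendibility Theorem}.
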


In the case $D=\mathbb{R}_{+}$, global bounded solutions to \eqref{MIE} can be expressed via the log-Laplace functionals of superprocesses provided $f$ admits the representation required below.

\begin{Example}
Let $D=\mathbb{R}_{+}$ and $b,c\in B_{b}([0,T]\times S,\mathbb{R}_{+})$. We let $n$ be a kernel from $[0,T]\times S$ to $(0,\infty)$ for which $\int_{0}^{\infty} u\wedge u^{2}\, n(\cdot,\cdot,du)$ is bounded. Assume that $f$ is of the form
\begin{equation}\label{Superprocesses Equation}
f(t,x,w) = b(t,x)w + c(t,x)w^{2} + \int_{0}^{\infty}(e^{-u w} - 1 + u w)\,n(t,x,du)
\end{equation}
for all $(t,x,w)\in [0,T]\times S\times\mathbb{R}_{+}$, then $f\in BC_{\mu}^{1-}([0,T]\times S\times\mathbb{R}_{+},\mathbb{R}_{+})$, due to Examples~\ref{Borel Regularity Examples}.  Hence, Theorem~\ref{One-Dimensional Global Existence Theorem} applies. For instance, let for the moment $n\in\mathbb{N}$, $\alpha_{1},\dots,\alpha_{n}\in (1,2)$, and $d_{1},\dots,d_{n}\in B_{b}([0,T]\times S,\mathbb{R}_{+})$, then $f$ could admit the representation
\[
f(t,x,w) = b(t,x)w+ c(t,x)w^{2} + \sum_{i=1}^{n} d_{i}(t,x) w^{\alpha_{i}}
\]
for each $(t,x,w)\in [0,T]\times S\times\mathbb{R}_{+}$. This follows from integration by parts  and the choice $n(t,x,B) = \sum_{i=1}^{n}d_{i}(t,x)\alpha_{i}((\alpha_{i}-1)/\Gamma(2-\alpha_{i}))\int_{B}u^{-1-\alpha_{i}}\,du$ for all $(t,x)\in [0,T]\times S$ and each Borel set $B$ in $(0,\infty)$, where $\Gamma$ denotes the Gamma function.
 
In the general case \eqref{Superprocesses Equation}, Theorem 1.1 in Dynkin~\cite{DynkinPath} yields an $(\mathscr{X},\mu,f)$-superprocess, which is a consistent progressive Markov process $\mathscr{Z}= (Z,(\mathscr{G}_{t})_{t\in [0,T]},\mathbb{Q})$ with state space $\mathscr{M}_{f}(S)$, the Polish space of all finite Borel measures on $S$, such that for each $t\in (0,T]$ and every $\tilde{g}\in B_{b}(S,\mathbb{R}_{+})$, the function
\[
[0,t]\times S\rightarrow \mathbb{R}_{+},\quad (r,x)\mapsto -\log\bigg(E_{r,\delta_{x}}^{Q}\bigg[e^{-\int_{S} \tilde{g}(x) \, dZ_{t}(x)}\bigg]\bigg)
\]
is Borel measurable and a global solution to \eqref{MIE} when $T$ and $g$ are replaced by $t$ and $\tilde{g}$, respectively. Here, $\mathbb{Q}$ is of the form $\mathbb{Q}=\{Q_{r,\lambda}\,|\, (r,\lambda)\in [0,T]\times\mathscr{M}_{f}(S)\}$ and $E_{r,\delta_{x}}^{Q}$ denotes the expectation with respect to $Q_{r,\delta_{x}}$ for all $(r,x)\in [0,T]\times S$. Thus,
\[
\overline{u}_{g}(r,x)= -\log\bigg(E_{r,\delta_{x}}^{Q}\bigg[e^{-\int_{S} g(x) \, dZ_{T}(x)}\bigg]\bigg)
\]
for each $(r,x)\in [0,T]\times S$.
\end{Example}

Finally, a combination of Theorem~\ref{One-Dimensional Global Existence Theorem} with Proposition~\ref{Linear Proposition} gives the following result.

\begin{Corollary}\label{One-Dimensional Linear Corollary}
Suppose that $D$ is a non-degenerate interval with $\underline{d}:=\inf D$ and $\overline{d}:=\sup D$, and there are two $\mu$-dominated $a,b\in B([0,T]\times S,\mathbb{R})$ such that
\[
f(t,x,w) = a(t,x) + b(t,x)w\quad\text{for all $(t,x,w)\in [0,T]\times S\times D$.}
\]
Additionally, for $\underline{d} > -\infty$ (resp.~$\overline{d} < \infty$) let $a(\cdot,x) + b(\cdot,x)\underline{d}\leq 0$ (resp.~$a(\cdot,x) + b(\cdot,x)\overline{d}\geq 0$) for all $x\in S$ $\mu$-a.s. Then
\begin{equation}\label{One-dimensional Linear Representation Formula}
\overline{u}_{g}(r,x) = E_{r,x}\bigg[e^{-\int_{r}^{T}b(s,X_{s})\,\mu(ds)}g(X_{T})\bigg] - E_{r,x}\bigg[\int_{r}^{T}e^{-\int_{r}^{t}b(s,X_{s})\,\mu(ds)}a(t,X_{t})\,\mu(dt)\bigg]
\end{equation}
for every $(r,x)\in [0,T]\times S$. Furthermore, whenever $\mathscr{X}$ is (right-hand) Feller, $a(s,\cdot)$ and $b(s,\cdot)$ are continuous for $\mu$-a.e.~$s\in [0,T]$, and $g$ is continuous, then $\overline{u}_{g}$ is (right-)continuous.
\end{Corollary}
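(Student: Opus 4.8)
The plan is to reduce the statement to Theorem~\ref{One-Dimensional Global Existence Theorem} together with Proposition~\ref{Linear Proposition}. First I would verify that $f(t,x,w)=a(t,x)+b(t,x)w$ meets all the hypotheses of Theorem~\ref{One-Dimensional Global Existence Theorem}. Since $a$ and $b$ are Borel measurable and $\mu$-dominated, Examples~\ref{Borel Regularity Examples}(i), applied with the identity map $\varphi(w)=w$, show that $f$ is affine $\mu$-bounded and Lipschitz $\mu$-continuous on $[0,T]\times S\times D$; in particular $f\in BC_{\mu}^{1-}([0,T]\times S\times D,\mathbb{R})$. If $\underline{d}>-\infty$, then on any bounded neighbourhood $W$ of $\underline{d}$ in $\overline{D}$ one has $|f(t,x,w)|\leq a(t,x)+b(t,x)\sup_{w'\in W}|w'|$, so the restriction of $f$ to $[0,T]\times S\times(W\cap D)$ is $\mu$-bounded and still Lipschitz $\mu$-continuous; thus $f$ is locally $\mu$-bounded and locally Lipschitz $\mu$-continuous at $\underline{d}$, while $\lim_{w\downarrow\underline{d}}f(\cdot,x,w)=a(\cdot,x)+b(\cdot,x)\underline{d}\leq 0$ for all $x\in S$ $\mu$-a.s.\ by hypothesis. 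The case $\overline{d}<\infty$ is symmetric, and when $\underline{d}=-\infty$ (resp.\ $\overline{d}=\infty$) the affine $\mu$-boundedness of $f$ immediately yields affine $\mu$-boundedness from above (resp.\ from below). Theorem~\ref{One-Dimensional Global Existence Theorem} then produces the unique global bounded solution $\overline{u}_{g}$ to \eqref{MIE}.

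Next I would identify $\overline{u}_{g}$ with the global bounded solution of the same equation posed on $\mathbb{R}$. Since $w\mapsto a(t,x)+b(t,x)w$ is defined for all $w\in\mathbb{R}$ and $g$ takes values in $D\subset\mathbb{R}$, the pair $(f,g)$ also defines a Markovian terminal value problem with state set $\mathbb{R}$, to which Proposition~\ref{Linear Proposition} (with $D=\mathbb{R}$ and $k=1$) applies and delivers a unique global bounded $\mathbb{R}$-valued solution together with the representation \eqref{Linear Representation Formula}. On the other hand $\overline{u}_{g}$, being bounded, say $|\overline{u}_{g}|\leq C$, satisfies $|f(s,X_{s},\overline{u}_{g}(s,X_{s}))|\leq a(s,X_{s})+C\,b(s,X_{s})$, whose $\mu$-integral has finite $P_{r,x}$-expectation by $\mu$-domination of $a$ and $b$; hence $\overline{u}_{g}$ is itself a global bounded $\mathbb{R}$-valued solution and, by the uniqueness just invoked (also contained in Proposition~\ref{Global Existence and Approximation Proposition}), it coincides with that solution. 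In dimension one $b$ is scalar-valued, so the commutation condition in Proposition~\ref{Linear Proposition}(iii) holds trivially and $\Sigma_{r,t}=e^{-\int_{r}^{t}b(s,X_{s})\,\mu(ds)}$ for $r\leq t$; substituting this into \eqref{Linear Representation Formula} gives precisely \eqref{One-dimensional Linear Representation Formula}.

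For the continuity assertion I would quote the last statement of Theorem~\ref{One-Dimensional Global Existence Theorem}. If $a(s,\cdot)$ and $b(s,\cdot)$ are continuous for $\mu$-a.e.\ $s\in[0,T]$, then $(x,w)\mapsto a(s,x)+b(s,x)w$ is continuous on $S\times D$ for $\mu$-a.e.\ $s$, i.e.\ $f(s,\cdot,\cdot)$ is continuous for $\mu$-a.e.\ $s$; combined with the (right-hand) Feller property of $\mathscr{X}$ and the continuity of $g$, Theorem~\ref{One-Dimensional Global Existence Theorem} then forces $\overline{u}_{g}$ to be (right-)continuous.

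The only step that requires genuine care is the second one: one must ensure that the $D$-valued ``unique global bounded solution'' of Theorem~\ref{One-Dimensional Global Existence Theorem} and the $\mathbb{R}$-valued ``unique global bounded solution'' of Proposition~\ref{Linear Proposition} really are the same object. This hinges on the elementary but essential observation that a bounded $D$-valued solution, since $D\subset\mathbb{R}$ and $f$ is affine $\mu$-bounded, is automatically a bounded $\mathbb{R}$-valued solution satisfying the integrability demanded in the definition, so that uniqueness for the problem on $\mathbb{R}$ pins down the closed-form expression. The remaining steps are routine verifications of hypotheses already isolated in the cited results.
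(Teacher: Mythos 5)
Your proposal is correct and follows essentially the same route as the paper: the paper likewise first invokes Theorem~\ref{One-Dimensional Global Existence Theorem} for existence, uniqueness, and (right-)continuity of $\overline{u}_{g}$, then extends $f$ to $\overline{f}(t,x,w)=a(t,x)+b(t,x)w$ on all of $[0,T]\times S\times\mathbb{R}$, applies Proposition~\ref{Linear Proposition} (where the scalar $b$ makes the commutation condition in (iii) trivial, yielding $\Sigma_{r,t}=e^{-\int_{r}^{t}b(s,X_{s})\,\mu(ds)}$), and identifies $\overline{u}_{g}$ with that solution by uniqueness. Your explicit verification of the hypotheses of Theorem~\ref{One-Dimensional Global Existence Theorem} and of the integrability needed to regard the $D$-valued solution as an $\mathbb{R}$-valued one merely spells out details the paper leaves implicit.
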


\section{Approach to the main results}\label{Approach to the main results}

\subsection{Comparison, stability, and growth behavior of solutions}\label{Comparison, stability, and growth behavior of solutions}

By using consistent boundedness and local dominance, we give a Markovian Gronwall inequality. A well-known result in this direction is provided by Dynkin~\cite[Lemma 3.2]{DynkinPart}.

\begin{Lemma}\label{Gronwall's Inequality}
Let $I$ be an admissible interval, $h\in B(S,\mathbb{R}_{+})$ be such that $E_{r,x}[|h(X_{T})|] < \infty$ for all $(r,x)\in [0,T]\times S$, and $a,b\in B(I\times S,\mathbb{R}_{+})$ be locally $\mu$-dominated. Suppose that $u\in B(I\times S,\mathbb{R}_{+})$ is $\mu$-suitably bounded and fulfills
\[
u(r,x) \leq E_{r,x}[h(X_{T})] + E_{r,x}\bigg[\int_{r}^{T}a(s,X_{s}) + b(s,X_{s})u(s,X_{s})\,\mu(ds)\bigg]
\]
for each $(r,x)\in I\times S$, then
\[
u(r,x) \leq E_{r,x}\bigg[e^{\int_{r}^{T}b(s,X_{s})\,\mu(ds)}\bigg(h(X_{T}) + \int_{r}^{T}a(s,X_{s})\,\mu(ds)\bigg)\bigg]
\]
for every $(r,x)\in I\times S$.
\end{Lemma}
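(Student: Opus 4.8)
The plan is to prove a Gronwall-type estimate by iterating the integral inequality and passing to the limit, exploiting the Markov property to re-express nested conditional expectations. First I would reduce to the case $I=[0,T]$ (the case $I=(t,T]$ follows by restricting $r$ to $(t,T]$, the hypotheses and conclusion being stated pointwise in $r$). Write $H:=h(X_T)$ and, for $r\in I$, set $A_r:=\int_r^T a(s,X_s)\,\mu(ds)$ and note that the hypothesis reads $u(r,x)\le E_{r,x}[H+A_r]+E_{r,x}[\int_r^T b(s,X_s)u(s,X_s)\,\mu(ds)]$. The first step is to substitute the inequality into itself: inside the integral, bound $u(s,X_s)$ using the same inequality evaluated at $(s,X_s)$ in place of $(r,x)$, then use the Markov property (condition (iii) of $\mathscr{X}$, in its form $E_{r,x}[\,\cdot\mid\mathscr{F}_s]=E_{s,X_s}[\,\cdot\,]$, together with the tower property) to collapse $E_{r,x}[E_{s,X_s}[\cdots]]$ back into a single $E_{r,x}[\cdots]$ with the inner time integral now running from $s$ to $T$. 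Fubini's theorem on $\{(s,s'):r\le s\le s'\le T\}$ then rearranges the resulting double $\mu$-integral.

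Carrying this out $n$ times produces
\[
u(r,x)\le E_{r,x}\bigg[\sum_{j=0}^{n-1}\frac{1}{j!}\Big(\int_r^T b(s,X_s)\,\mu(ds)\Big)^{j}\Big(H+A_r\Big)\bigg] + R_n(r,x),
\]
where the remainder $R_n$ involves $n$ nested $\mu$-integrals of $b$ against $u$. The combinatorial identity that makes the $b$-integrals assemble into $\frac{1}{j!}(\int_r^T b\,\mu)^{j}$ is the standard simplex-volume computation: $\int_{r\le s_1\le\cdots\le s_j\le T}\prod b(s_i,X_{s_i})\,\mu(ds_1)\cdots\mu(ds_j)=\frac{1}{j!}(\int_r^T b(s,X_s)\,\mu(ds))^{j}$, valid because $\mu$ is atomless so the diagonal sets are $\mu^{\otimes j}$-null. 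Letting $n\to\infty$, the finite sum converges monotonically to $E_{r,x}[e^{\int_r^T b(s,X_s)\,\mu(ds)}(H+A_r)]$, which is exactly the asserted bound; it remains only to check this expectation is finite (so the bound is not vacuous) and that $R_n(r,x)\to 0$.

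The main obstacle is controlling the remainder $R_n$ and justifying all the interchanges of expectation, conditional expectation, and $\mu$-integration — this is where the structural hypotheses are used. The assumption that $u$ is $\mu$-suitably bounded gives, for each fixed $r$, a $\mu$-null set $N$ with $C:=\sup_{(s,y)\in(N^c\cap[r,T])\times S}u(s,y)<\infty$; since $a,b$ are locally $\mu$-dominated, $\int_r^T b(s,X_s)\,\mu(ds)$ and $\int_r^T a(s,X_s)\,\mu(ds)$ are finite constants bounded uniformly in the trajectory only after taking $\mu$-dominating functions $\bar a,\bar b\in B(I,\mathbb{R}_+)$, so $R_n(r,x)\le C\,E_{r,x}[\frac{1}{n!}(\int_r^T \bar b(s)\,\mu(ds))^{n}]=\frac{C}{n!}(\int_r^T\bar b(s)\,\mu(ds))^{n}\to 0$, with the same estimate showing $E_{r,x}[e^{\int_r^T b(s,X_s)\,\mu(ds)}(H+A_r)]\le e^{\int_r^T\bar b\,d\mu}(E_{r,x}[H]+\int_r^T\bar a\,d\mu)<\infty$ by the integrability hypothesis on $h$. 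The Fubini and Markov-property interchanges are all applied to nonnegative integrands, so Tonelli's theorem legitimizes them without further integrability checks; one should however verify at each stage that the relevant integrand is genuinely jointly measurable, using consistency of $u$ and the progressive measurability of $X$, and that the $\mu$-null sets where $u$ may be unbounded do not affect the expectations, since $\mu$-a.e.\ equality is preserved under $E_{r,x}$ by condition (ii) of the Markov process (transition probabilities absorb $\mu$-null sets harmlessly because $\mu(ds)$ never charges them). I would write the induction step as a lemma and present the $n\to\infty$ passage as the conclusion.
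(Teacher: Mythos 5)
Your proposal is correct and follows essentially the same route as the paper: iterate the inequality $n$ times, use the Markov property together with the simplex identity (the paper's ``integration by parts'') to assemble the iterated $b$-integrals into $\tfrac{1}{j!}\big(\int_r^T b\,d\mu\big)^j$, kill the remainder via the $\mu$-suitable boundedness of $u$ and the $\mu$-domination of $b$, and conclude by monotone convergence. The only cosmetic difference is that the paper keeps the remainder in the form $E_{r,x}\big[\int_r^T\big(\int_r^t b\,d\mu\big)^n \tfrac{b(t,X_t)}{n!}u(t,X_t)\,\mu(dt)\big]$ and invokes dominated convergence there, which is exactly the estimate you give.
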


\begin{proof}
It follows inductively from the Markov property of $\mathscr{X}$ and integration by parts that
\begin{align*}
u(r,x) &\leq \sum_{i=0}^{n}E_{r,x}\bigg[\frac{1}{i!}\int_{r}^{T}\bigg(b(s,X_{s})\,\mu(ds)\bigg)^{i}\bigg(h(X_{T}) + \int_{r}^{T}a(s,X_{s})\mu(ds)\bigg)\bigg]\\
&\quad + E_{r,x}\bigg[\int_{r}^{T}\bigg(\int_{r}^{t}b(s,X_{s})\,\mu(ds)\bigg)^{n}\frac{b(t,X_{t})}{n!} u(t,X_{t})\,\mu(dt)\bigg]
\end{align*}
for all $(r,x)\in I\times S$ and each $n\in\mathbb{N}$. Since $u$ is $\mu$-suitably bounded, dominated convergence yields that
\[
\lim_{n\uparrow\infty} E_{r,x}\bigg[\int_{r}^{T}\left(\int_{r}^{t}b(s,X_{s})\,\mu(ds)\right)^{n}\frac{b(t,X_{t})}{n!} u(t,X_{t})\,\mu(dt)\bigg] = 0
\]
for each $(r,x)\in I\times S$. Hence, monotone convergence gives the asserted estimate.
\end{proof}

Let us compare approximate solutions.

\begin{Lemma}\label{Comparison Lemma}
Assume that $f|([0,T]\times S\times W)$ is Lipschitz $\mu$-continuous for some set $W$ in $D$. That is, there is a $\mu$-dominated $\lambda\in B([0,T]\times S,\mathbb{R}_{+})$ such that
\[|f(t,x,w) - f(t,x,w')| \leq \lambda(t,x)|w-w'|\quad\text{for all $(t,x)\in [0,T]\times S$}
\]
and each $w,w'\in W$. Let $\varepsilon,\tilde{\varepsilon}\in B([0,T]\times S,\mathbb{R}_{+})$ be $\mu$-dominated, $\tilde{g}\in B_{b}(S,D)$ be consistent, and $I$ be an admissible interval. Then every $\varepsilon$-approximate solution $u$ to \eqref{MIE} on $I$ and each $\tilde{\varepsilon}$-approximate solution $\tilde{u}$ to \eqref{MIE} on $I$, where $g$ is replaced by $\tilde{g}$, satisfy
\[
|u-\tilde{u}|(r,x) \leq E_{r,x}\bigg[e^{\int_{r}^{T}\lambda(s,X_{s})\,\mu(ds)}\bigg(|g-\tilde{g}|(X_{T}) + \int_{r}^{T}(\varepsilon + \tilde{\varepsilon})(s,X_{s})\,\mu(ds)\bigg)\bigg]
\]
for all $(r,x)\in I\times S$ provided $u$, $\tilde{u}$ are $\mu$-suitably bounded and $u(\cdot,y),\tilde{u}(\cdot,y)\in W$ for each $y\in S$ $\mu$-a.s.~on $I$.
\end{Lemma}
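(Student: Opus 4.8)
The plan is to reduce the claimed bound to an application of the Markovian Gronwall inequality, Lemma~\ref{Gronwall's Inequality}, applied to the nonnegative function $v := |u - \tilde u|$ on $I\times S$. First I would verify that $v$ is eligible: it is Borel measurable and, since $u$ and $\tilde u$ are $\mu$-suitably bounded, so is $v$. The terminal condition gives $v(T,x) = |g(x) - \tilde g(x)| = |g-\tilde g|(x)$, and $h := |g-\tilde g|$ is bounded and nonnegative, hence $E_{r,x}[h(X_T)] < \infty$ for all $(r,x)$. Next I would use the characterization of (approximate) solutions via the terminal-value form stated just before the definition of admissibility: subtracting the two defining inequalities for $u$ and $\tilde u$, and using the triangle inequality together with $|E_{r,x}[\cdot]| \le E_{r,x}[|\cdot|]$, one obtains for all $(r,x)\in I\times S$
\[
v(r,x) \le E_{r,x}[h(X_T)] + E_{r,x}\bigg[\int_r^T |f(s,X_s,u(s,X_s)) - f(s,X_s,\tilde u(s,X_s))|\,\mu(ds)\bigg] + E_{r,x}\bigg[\int_r^T (\varepsilon+\tilde\varepsilon)(s,X_s)\,\mu(ds)\bigg].
\]
Here the two error terms $E_{r,x}[\int_r^T \varepsilon(s,X_s)\,\mu(ds)]$ and the analogous one for $\tilde\varepsilon$ have been combined.

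The key step is to control the middle term by the Lipschitz hypothesis. Since $u(\cdot,y), \tilde u(\cdot,y) \in W$ for each $y\in S$ for $\mu$-a.e.\ time, the Markov property (which moves the evaluation of $X_s$ into the measure $P_{s,X_s}$ and preserves $\mu$-a.e.\ statements, as the relevant null set is a fixed $\mu$-null Borel set in time) ensures that $u(s,X_s)$ and $\tilde u(s,X_s)$ lie in $W$ for $\mu\otimes P_{r,x}$-a.e.\ $(s,\omega)$, for every $(r,x)$. Therefore $|f(s,X_s,u(s,X_s)) - f(s,X_s,\tilde u(s,X_s))| \le \lambda(s,X_s)\,|u(s,X_s) - \tilde u(s,X_s)| = \lambda(s,X_s)\,v(s,X_s)$ almost everywhere, and the middle term is dominated by $E_{r,x}[\int_r^T \lambda(s,X_s)\,v(s,X_s)\,\mu(ds)]$. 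Substituting back, $v$ satisfies exactly the hypothesis of Lemma~\ref{Gronwall's Inequality} with $a := \varepsilon + \tilde\varepsilon$ (which is $\mu$-dominated, hence locally $\mu$-dominated, as a sum of $\mu$-dominated maps), $b := \lambda$ (also $\mu$-dominated, hence locally $\mu$-dominated), and $h := |g-\tilde g|$. Lemma~\ref{Gronwall's Inequality} then yields precisely the asserted estimate.

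The main obstacle, and the place to be careful, is the measurability/integrability bookkeeping needed to justify the displayed inequality for $v$ before invoking Gronwall: one must check that $\int_r^T |f(s,X_s,u(s,X_s)) - f(s,X_s,\tilde u(s,X_s))|\,\mu(ds)$ is finite and $P_{r,x}$-integrable, which follows from the corresponding finiteness and integrability built into the definition of $\varepsilon$- and $\tilde\varepsilon$-approximate solutions (both $\int_r^t |f(\cdot,\cdot,u)|\,\mu$ and $\int_r^t |f(\cdot,\cdot,\tilde u)|\,\mu$ are finite and $P_{r,x}$-integrable), together with the triangle inequality; and one must confirm that $v$ is $\mu$-suitably bounded, which is immediate from the corresponding property of $u$ and $\tilde u$ and the fact that a finite union of $\mu$-null sets is $\mu$-null. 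Once these routine points are in place, the argument is a direct subtraction-and-Gronwall, with no further difficulty.
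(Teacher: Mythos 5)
Your proof is correct and takes essentially the same route as the paper: take $t=T$ in the two defining inequalities, subtract, apply the triangle inequality together with the Lipschitz bound on $W$ (which holds for $\mu$-a.e.\ $s$ and every $\omega$, since $u(\cdot,y),\tilde u(\cdot,y)\in W$ for all $y$ off a single $\mu$-null set of times), and conclude with Lemma~\ref{Gronwall's Inequality} applied to $v=|u-\tilde u|$ with $a=\varepsilon+\tilde\varepsilon$, $b=\lambda$, $h=|g-\tilde g|$. The only cosmetic point is that your appeal to the Markov property when placing $u(s,X_s)$ in $W$ is unnecessary, as the exceptional set is a fixed $\mu$-null set in time and the inclusion holds pointwise in $\omega$.
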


\begin{proof}
The triangle inequality yields that
\begin{align*}
|u-\tilde{u}|(r,x) &\leq E_{r,x}[|g-\tilde{g}|(X_{T})] + E_{r,x}\bigg[\int_{r}^{T} (\varepsilon+\tilde{\varepsilon})(s,X_{s}) + \lambda(s,X_{s})|u-\tilde{u}|(s,X_{s})\,\mu(ds)\bigg]
\end{align*}
for each $(r,x)\in I\times S$, since $|f(s,X_{s},u(s,X_{s})) - f(s,X_{s},\tilde{u}(s,X_{s}))| \leq \lambda(s,X_{s})|u-\tilde{u}|(s,X_{s})$ for $\mu$-a.e.~$s\in [r,T]$. Hence, Lemma~\ref{Gronwall's Inequality} leads us to the asserted estimate.
\end{proof}

From the comparison we get an uniqueness result provided $f$ belongs to \eqref{Lipschitz Space}. Note that the procedure of the proof originates from Theorem 6.7 in Amann~\cite{Amann}.

\begin{Corollary}\label{Uniqueness Corollary}
Suppose that $f\in BC_{\mu}^{1-}([0,T]\times S\times D,\mathbb{R}^{k})$. Then there is at most a unique $\mu$-admissible solution to \eqref{MIE} on every admissible interval $I$.
\end{Corollary}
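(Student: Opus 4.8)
The plan is to deduce the statement, via the Markov property of $\mathscr{X}$, from a single application of the comparison estimate of Lemma~\ref{Comparison Lemma}, after confining the two candidate solutions to a compact subset of $D$ on which the merely \emph{locally} Lipschitz $\mu$-continuous map $f$ becomes genuinely Lipschitz $\mu$-continuous.

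So let $u$ and $\tilde{u}$ be $\mu$-admissible solutions to \eqref{MIE} on an admissible interval $I$. As $I\cap[0,T)\neq\emptyset$ and $I\times S=\bigcup_{r_{0}\in I\cap[0,T)}[r_{0},T]\times S$, it is enough to show $u=\tilde{u}$ on $[r_{0},T]\times S$ for every $r_{0}\in I\cap[0,T)$. Fix such an $r_{0}$. Then $[r_{0},T]$ is admissible, and by the equivalent characterisation of solutions via a terminal-value integral equation (recalled in Section~\ref{The Markovian terminal value problem}), the restrictions of $u$ and $\tilde{u}$ to $[r_{0},T]\times S$ are again solutions to \eqref{MIE} on $[r_{0},T]$. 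Applying $\mu$-admissibility of $u$ and of $\tilde{u}$ at $r=r_{0}$ and uniting the two exceptional sets, we obtain a $\mu$-null set $N\in\mathscr{B}([0,T])$ such that the closure $K$ of $u((N^{c}\cap[r_{0},T])\times S)\cup\tilde{u}((N^{c}\cap[r_{0},T])\times S)$ is compact and contained in $D$.

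The key step is to verify that $f|([0,T]\times S\times K)$ is Lipschitz $\mu$-continuous; this is where the full strength of $f\in BC_{\mu}^{1-}([0,T]\times S\times D,\mathbb{R}^{k})$ is used. For each $p\in K$, local Lipschitz $\mu$-continuity and local $\mu$-boundedness of $f$ at $p$ provide a radius $\rho_{p}>0$ such that on $[0,T]\times S\times(B(p,\rho_{p})\cap D)$ the map $f$ is Lipschitz $\mu$-continuous with a $\mu$-dominated constant $\lambda_{p}$ and satisfies $|f(t,x,w)|\le a_{p}(t,x)$ with $a_{p}$ $\mu$-dominated. Covering $K$ by finitely many balls $B(p_{1},\rho_{p_{1}}/2),\dots,B(p_{m},\rho_{p_{m}}/2)$, I would set $\delta:=\min_{j}\rho_{p_{j}}/2>0$ and $\lambda:=\max_{j}\lambda_{p_{j}}+(2/\delta)\max_{j}a_{p_{j}}$, which is again $\mu$-dominated. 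For $w,w'\in K$ with $|w-w'|<\delta$, the point $w$, and hence also $w'$, lies in some $B(p_{j},\rho_{p_{j}})$, so $|f(t,x,w)-f(t,x,w')|\le\lambda_{p_{j}}(t,x)|w-w'|$; for $|w-w'|\ge\delta$ one has $|f(t,x,w)-f(t,x,w')|\le2\max_{j}a_{p_{j}}(t,x)\le(2/\delta)\max_{j}a_{p_{j}}(t,x)|w-w'|$. In both cases $|f(t,x,w)-f(t,x,w')|\le\lambda(t,x)|w-w'|$ for all $(t,x)\in[0,T]\times S$.

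Finally, I would apply Lemma~\ref{Comparison Lemma} on the admissible interval $[r_{0},T]$ with $W:=K$, $\tilde{g}:=g$, and $\varepsilon:=\tilde{\varepsilon}:=0$: the maps $u$ and $\tilde{u}$ are $0$-approximate solutions on $[r_{0},T]$, they are $\mu$-suitably bounded since their values off $N$ lie in the compact set $K$, and $u(\cdot,y),\tilde{u}(\cdot,y)\in K$ for each $y\in S$ $\mu$-a.s.\ on $[r_{0},T]$ by construction of $K$. The lemma then yields
\[
|u-\tilde{u}|(r,x)\le E_{r,x}\bigg[e^{\int_{r}^{T}\lambda(s,X_{s})\,\mu(ds)}\bigg(|g-g|(X_{T})+\int_{r}^{T}0\,\mu(ds)\bigg)\bigg]=0
\]
for all $(r,x)\in[r_{0},T]\times S$, so $u=\tilde{u}$ on $[r_{0},T]\times S$, and since $r_{0}$ was arbitrary, on all of $I\times S$. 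I expect the only genuine obstacle to be the localisation carried out in the third paragraph, namely upgrading the local Lipschitz $\mu$-continuity of $f$ to Lipschitz $\mu$-continuity on the compact value set $K$ with a $\mu$-dominated constant; the bounded-difference estimate for far-apart pairs $w,w'\in K$ is precisely the reason $BC_{\mu}^{1-}$ couples local $\mu$-boundedness with local Lipschitz $\mu$-continuity, while the rest is routine given the already-established Gronwall-type comparison.
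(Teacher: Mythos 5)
Your proof is correct and follows essentially the same route as the paper: restrict to $[r_{0},T]$, use $\mu$-admissibility to confine both solutions to a compact value set $K\subset D$, upgrade the local Lipschitz $\mu$-continuity of $f$ to genuine Lipschitz $\mu$-continuity there, and conclude with Lemma~\ref{Comparison Lemma}. The only difference is that you carry out the covering argument explicitly (including the far-apart case via local $\mu$-boundedness), whereas the paper delegates exactly this localisation step to Proposition~6.4 in Amann~\cite{Amann} with ``minor modifications''.
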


\begin{proof}
Suppose that $u$ and $\tilde{u}$ are two $\mu$-admissible solutions to \eqref{MIE} on $I$ and let $r\in I$. Then there is a compact set $K$ in $D$ such that $u(\cdot,y)$, $\tilde{u}(\cdot,y)\in K$ for all $y\in S$ $\mu$-a.s.~on $[r,T]$. As $K$ is compact, it follows despite of minor modifications from Proposition 6.4 in Amann~\cite{Amann} that there is a neighborhood $W$ of $K$ in $D$ such that $f|([0,T]\times S\times W)$ is Lipschitz $\mu$-continuous. Hence, $u=\tilde{u}$ on $[r,T]\times S$, by Lemma~\ref{Comparison Lemma}. The assertion follows.
\end{proof}

Now, we consider stability.

\begin{Proposition}\label{Limit Proposition}
Let $f\in BC_{\mu}^{1-}([0,T]\times S\times D,\mathbb{R}^{k})$ and $I$ be an admissible interval. For each $n\in\mathbb{N}$ let $\varepsilon_{n}\in B([0,T]\times S,\mathbb{R}_{+})$ be $\mu$-dominated, $g_{n}\in B_{b}(S,D)$ be consistent, and $u_{n}$ be an $\varepsilon_{n}$-approximate solution to \eqref{MIE} on $I$ with $g$ replaced by $g_{n}$. Assume that the following three conditions hold:
\begin{enumerate}[(i)]
\item $(g_{n})_{n\in\mathbb{N}}$ and $\big(\int_{0}^{T}\varepsilon_{n}(t,X_{t})\,\mu(dt)\big)_{n\in\mathbb{N}}$ converge uniformly to $g$ and $0$, respectively.
\item The closure of $\{u_{n}(r,x)\,|\,n\in\mathbb{N}\}$ is included in $D$ for each $(r,x)\in I\times S$.
\item For each $r\in I$ there is a compact set $K$ in $D$ such that $u_{n}(\cdot,y)\in K$ for all $n\in\mathbb{N}$ and each $y\in S$ $\mu$-a.s.~on $[r,T]$.
\end{enumerate}
Then $(u_{n})_{n\in\mathbb{N}}$ converges locally uniformly in $t\in I$ and uniformly in $x\in S$ to the unique $\mu$-admissible solution to \eqref{MIE} on $I$.
\end{Proposition}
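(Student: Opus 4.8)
The plan is to show that $(u_{n})_{n\in\mathbb{N}}$ is uniformly Cauchy on $[r_{0},T]\times S$ for every $r_{0}\in I$ with $r_{0}<T$, to identify the resulting limit $u$, and finally to check that $u$ is a $\mu$-admissible solution to \eqref{MIE} on $I$, which by Corollary~\ref{Uniqueness Corollary} is then necessarily the unique one. To this end, fix such an $r_{0}$. By condition (iii) there is a compact set $K$ in $D$ with $u_{n}(\cdot,y)\in K$ for all $n\in\mathbb{N}$ and $y\in S$ $\mu$-a.s.~on $[r_{0},T]$; hence each $u_{n}$ restricted to $[r_{0},T]\times S$ is $\mu$-suitably bounded. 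Exactly as in the proof of Corollary~\ref{Uniqueness Corollary}, an application of Proposition~6.4 in Amann~\cite{Amann} (up to minor modifications) provides a neighborhood $W$ of $K$ in $D$ on which $f$ is Lipschitz $\mu$-continuous, with a $\mu$-dominated Lipschitz coefficient $\lambda\in B([0,T]\times S,\mathbb{R}_{+})$; let $\overline{\lambda}\in B([0,T],\mathbb{R}_{+})$ be $\mu$-integrable with $\lambda(\cdot,y)\leq\overline{\lambda}$ for all $y\in S$ $\mu$-a.s., and set $c:=\int_{0}^{T}\overline{\lambda}(s)\,\mu(ds)<\infty$.

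Next I would apply Lemma~\ref{Comparison Lemma} on the admissible interval $[r_{0},T]$ with this $W$, using $g_{n}$ and $g_{m}$ in the roles of $g$ and $\tilde{g}$: then $u_{n}$ is an $\varepsilon_{n}$-approximate and $u_{m}$ an $\varepsilon_{m}$-approximate solution to the corresponding equations, both $\mu$-suitably bounded and taking values in $W$ for each $y\in S$ $\mu$-a.s.~on $[r_{0},T]$. Using $\int_{r}^{T}\lambda(s,X_{s})\,\mu(ds)\leq c$ everywhere and $\varepsilon_{n},\varepsilon_{m}\geq 0$, this yields
\[
|u_{n}-u_{m}|(r,x) \leq e^{c}\bigg(\sup_{y\in S}|g_{n}-g_{m}|(y) + E_{r,x}\bigg[\int_{0}^{T}(\varepsilon_{n}+\varepsilon_{m})(s,X_{s})\,\mu(ds)\bigg]\bigg)
\]
for all $(r,x)\in[r_{0},T]\times S$. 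By condition (i) the right-hand side tends to $0$ uniformly in $(r,x)$ as $n,m\to\infty$, so $(u_{n})$ is uniformly Cauchy on $[r_{0},T]\times S$; since $r_{0}$ was arbitrary, $(u_{n})$ converges to a map $u$, locally uniformly in $t\in I$ and uniformly in $x\in S$. Then $u(r,x)\in D$ for every $(r,x)\in I\times S$ by condition (ii), the map $u\in B(I\times S,D)$ is consistent as a locally uniform limit of consistent Borel measurable maps, and $u(\cdot,y)\in K$ for all $y\in S$ $\mu$-a.s.~on $[r_{0},T]$, so $u$ is $\mu$-admissible.

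It remains to verify that $u$ solves \eqref{MIE} on $I$. Setting $t=T$ in the definition of an $\varepsilon_{n}$-approximate solution and using $u_{n}(T,\cdot)=g_{n}$ gives
\[
\bigg|E_{r,x}[g_{n}(X_{T})] - u_{n}(r,x) - E_{r,x}\bigg[\int_{r}^{T}f(s,X_{s},u_{n}(s,X_{s}))\,\mu(ds)\bigg]\bigg| \leq E_{r,x}\bigg[\int_{r}^{T}\varepsilon_{n}(s,X_{s})\,\mu(ds)\bigg]
\]
for every $(r,x)\in I\times S$. Here $u_{n}(r,x)\to u(r,x)$, while $E_{r,x}[g_{n}(X_{T})]\to E_{r,x}[g(X_{T})]$ and the right-hand side tends to $0$ by condition (i) and the uniform boundedness of $(g_{n})$. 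For the integral term, fixing $\hat{w}\in K$ and using that local $\mu$-boundedness of $f$ makes $f(\cdot,\cdot,\hat{w})$ $\mu$-dominated — say $\mu$-a.e.~by a $\mu$-integrable $\overline{f}_{\hat{w}}$ — the Lipschitz $\mu$-continuity on $W\supseteq K$ furnishes the $n$- and path-independent $\mu$-integrable bound $|f(s,X_{s},u_{n}(s,X_{s}))|\leq\overline{f}_{\hat{w}}(s)+\overline{\lambda}(s)\,\mathrm{diam}(K)$ for $\mu$-a.e.~$s\in[r,T]$, and likewise with $u$ in place of $u_{n}$, so the limiting integral is finite and $P_{r,x}$-integrable; moreover, since $u_{n}(s,X_{s}),u(s,X_{s})\in W$ $\mu$-a.s.~on $[r,T]$,
\[
E_{r,x}\bigg[\int_{r}^{T}\big|f(s,X_{s},u_{n}(s,X_{s}))-f(s,X_{s},u(s,X_{s}))\big|\,\mu(ds)\bigg] \leq c\sup_{(s,y)\in[r_{0},T]\times S}|u_{n}-u|(s,y),
\]
which tends to $0$ as $n\to\infty$, uniformly in $(r,x)\in[r_{0},T]\times S$. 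Passing to the limit then shows $u(r,x)=E_{r,x}[g(X_{T})]-E_{r,x}\big[\int_{r}^{T}f(s,X_{s},u(s,X_{s}))\,\mu(ds)\big]$ for all $(r,x)\in I\times S$, so by the characterization of solutions recalled after the definition $u$ solves \eqref{MIE} on $I$, and Corollary~\ref{Uniqueness Corollary} identifies it as the unique $\mu$-admissible solution. I expect the main obstacle to be essentially organizational: one must extract the uniformities — a single compact range $K$, a single Lipschitz neighborhood $W$ with one coefficient $\lambda$, and a single $\mu$-integrable majorant for $f(\cdot,X,u_{n}(\cdot,X))$ — valid simultaneously for all $n$, so that Lemma~\ref{Comparison Lemma} applies to every pair at once and the limit passes inside the integral; this is precisely where the $BC_{\mu}^{1-}$-structure of $f$ is used.
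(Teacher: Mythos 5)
Your proposal is correct and follows essentially the same route as the paper: apply Lemma~\ref{Comparison Lemma} on a common Lipschitz neighborhood $W$ of the compact set $K$ from (iii) to get the uniform Cauchy property, identify the limit via (ii), and then pass to the limit in the fixed-point identity using the same Lipschitz estimate to see that the limit solves \eqref{MIE}, with uniqueness from Corollary~\ref{Uniqueness Corollary}. The only cosmetic difference is that you bound $e^{\int_r^T\lambda(s,X_s)\,\mu(ds)}$ by a constant $e^{c}$ and spell out the $\mu$-integrable majorant for $f(\cdot,X,u_n(\cdot,X))$, which the paper leaves implicit.
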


\begin{proof}
As uniqueness is covered by Corollary~\ref{Uniqueness Corollary}, we turn directly to the existence claim. Let $r\in I$ and $K$ be a compact set $K$ in $D$ so that $u_{n}(\cdot,y)\in K$ for all $n\in\mathbb{N}$ and each $y\in S$ $\mu$-a.s.~on $[r,T]$. Then there is a neighborhood $W$ of $K$ in $D$ and a $\mu$-dominated $\lambda\in B([r,T]\times S,\mathbb{R}_{+})$ with $|f(t,x,w)-f(t,x,w')|\leq \lambda(t,x)|w-w'|$ for all $(t,x)\in [r,T]\times S$ and each $w,w'\in W$. Thus, Lemma~\ref{Comparison Lemma} ensures that
\[
|u_{m} - u_{n}|(s,x) \leq E_{s,x}\bigg[e^{\int_{s}^{T}\lambda(t,X_{t})\,\mu(dt)}\bigg(|g_{m} - g_{n}|(X_{T}) + \int_{s}^{T}(\varepsilon_{m} + \varepsilon_{n})(t,X_{t})\,\mu(dt)\bigg)\bigg]
\]
for all $m, n\in\mathbb{N}$ and every $(s,x)\in [r,T]\times S$. From (i) we infer that $(u_{n})_{n\in\mathbb{N}}$ is a uniformly Cauchy sequence on $[r,T]\times S$. As (ii) holds and $r\in I$ has been arbitrarily chosen, this shows that $(u_{n})_{n\in\mathbb{N}}$ converges locally uniformly in $t\in I$ and uniformly in $x\in S$ to some map $u\in B(I\times S, D)$.

We now check that $u$ is a $\mu$-admissible solution to \eqref{MIE} on $I$. Let as before $r\in I$ and $K$ be a compact set in $D$ with $u_{n}(\cdot,y)\in K$ for all $n\in\mathbb{N}$ and each $y\in S$ $\mu$-a.s.~on $[r,T]$, which gives $u(\cdot,y)\in K$ for all $y\in S$ $\mu$-a.s.~on $[r,T]$. Let us pick a $\mu$-dominated $\lambda\in B([r,T]\times S,\mathbb{R}_{+})$ with $|f(t,x,w) - f(t,x,w')|\leq \lambda(t,x)|w-w'|$ for all $(t,x)\in [r,T]\times S$ and every $w,w'\in K$, then
\begin{align*}
\bigg|u_{n}(s,x) &- E_{s,x}[g(X_{T})] - E_{s,x}\bigg[\int_{s}^{T}f(t,X_{t},u(t,X_{t}))\,\mu(dt)\bigg]\bigg|\\
&\leq E_{s,x}[|g_{n}-g|(X_{T})] + E_{s,x}\bigg[\int_{s}^{T}\lambda(t,X_{t})|u_{n} - u|(t,X_{t}) + \varepsilon_{n}(t,X_{t})\,\mu(dt)\bigg]
\end{align*}
for all $n\in\mathbb{N}$ and each $(s,x)\in [r,T]\times S$. This entails that $(u_{n})_{n\in\mathbb{N}}$ also converges locally uniformly in $t\in I$ and uniformly in $x\in S$ to the map
\[
I\times S\rightarrow\mathbb{R}^{k},\quad (r,x)\mapsto E_{r,x}[g(X_{T})]- E_{r,x}\bigg[\int_{r}^{T}f(s,X_{s},u(s,X_{s}))\,\mu(ds)\bigg],
\]
which proves the proposition.
\end{proof}

We conclude with a growth estimate.

\begin{Lemma}\label{Growth Lemma}
Assume that $f$ is affine $\mu$-bounded. In other words, there are two $\mu$-dominated $a,b\in B([0,T]\times S,\mathbb{R}_{+})$ with $|f(t,x,w)| \leq a(t,x) + b(t,x)|w|$ for all $(t,x,w)\in [0,T]\times S\times D$. Then every $\mu$-suitably bounded solution $u$ to \eqref{MIE} on $I$ fulfills
\begin{equation}\label{Growth Estimate}
|u(r,x)| \leq E_{r,x}\bigg[e^{\int_{r}^{T}b(s,X_{s})\,\mu(ds)}\bigg(|g(X_{T})| + \int_{r}^{T}a(s,X_{s})\,\mu(ds)\bigg)\bigg]
\end{equation}
for each $(r,x)\in I\times S$.
\end{Lemma}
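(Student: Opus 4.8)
The plan is to reduce the estimate to the Markovian Gronwall inequality of Lemma~\ref{Gronwall's Inequality}, applied to the nonnegative scalar function $|u|$. First I would invoke the equivalent formulation of a solution recorded immediately after the definition of $\varepsilon$-approximate solutions: since $u$ is a solution to \eqref{MIE} on $I$, the integral $\int_{r}^{T}|f(s,X_{s},u(s,X_{s}))|\,\mu(ds)$ is finite and $P_{r,x}$-integrable and
\[
u(r,x) = E_{r,x}[g(X_{T})] - E_{r,x}\bigg[\int_{r}^{T}f(s,X_{s},u(s,X_{s}))\,\mu(ds)\bigg]
\]
for all $(r,x)\in I\times S$. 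Taking absolute values, using the triangle inequality for $|\cdot|$ and for $E_{r,x}$, and then the pointwise affine bound $|f(t,x,w)|\leq a(t,x)+b(t,x)|w|$ evaluated along the path (legitimate because $u(s,\cdot)$ takes values in $D$), I would arrive at
\[
|u(r,x)| \leq E_{r,x}[|g(X_{T})|] + E_{r,x}\bigg[\int_{r}^{T}a(s,X_{s}) + b(s,X_{s})|u(s,X_{s})|\,\mu(ds)\bigg]
\]
for every $(r,x)\in I\times S$.

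Next I would verify that the hypotheses of Lemma~\ref{Gronwall's Inequality} hold with $h:=|g|$ and with $a,b$ restricted to $I\times S$: the function $h=|g|$ lies in $B(S,\mathbb{R}_{+})$ and satisfies $E_{r,x}[|h(X_{T})|]<\infty$ for all $(r,x)$ because $g$ is bounded; the maps $a,b$ are $\mu$-dominated on $[0,T]\times S$, so their restrictions to $I\times S$ are locally $\mu$-dominated; and $|u|\in B(I\times S,\mathbb{R}_{+})$ is $\mu$-suitably bounded since $u$ is. The displayed inequality above is precisely the hypothesis of Lemma~\ref{Gronwall's Inequality}, so its conclusion yields
\[
|u(r,x)| \leq E_{r,x}\bigg[e^{\int_{r}^{T}b(s,X_{s})\,\mu(ds)}\bigg(|g(X_{T})| + \int_{r}^{T}a(s,X_{s})\,\mu(ds)\bigg)\bigg]
\]
for all $(r,x)\in I\times S$, which is exactly \eqref{Growth Estimate}.

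Since the analytic core of the argument — the iteration together with the dominated- and monotone-convergence passages — is already encapsulated in Lemma~\ref{Gronwall's Inequality}, I do not expect a genuine obstacle here; the work is essentially bookkeeping. The only points requiring a word of care are: deducing the integrability $E_{r,x}[|g(X_{T})|]<\infty$ (immediate from boundedness of $g$), observing that $\mu$-dominance of $a$ and $b$ on $[0,T]\times S$ passes to local $\mu$-dominance on the admissible interval $I$, and checking that $\mu$-suitable boundedness of $u$ transfers verbatim to $|u|$, so that the $\mu$-suitable-boundedness hypothesis of Lemma~\ref{Gronwall's Inequality} is indeed available.
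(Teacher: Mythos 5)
Your argument is correct and coincides with the paper's proof: both derive the inequality $|u(r,x)| \leq E_{r,x}[|g(X_{T})|]+E_{r,x}\big[\int_{r}^{T}a(s,X_{s}) + b(s,X_{s})|u(s,X_{s})|\,\mu(ds)\big]$ from the fixed-point formulation of a solution together with the affine $\mu$-bound, and then apply Lemma~\ref{Gronwall's Inequality} with $h=|g|$. The extra bookkeeping you carry out (integrability of $|g(X_T)|$, local $\mu$-dominance of the restrictions of $a,b$, and $\mu$-suitable boundedness of $|u|$) is exactly what the paper leaves implicit.
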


\begin{proof}
We see that $|u(r,x)| \leq E_{r,x}[|g(X_{T})|]+E_{r,x}\big[\int_{r}^{T}a(s,X_{s}) + b(s,X_{s})|u(s,X_{s})|\,\mu(ds)\big]$ for every $(r,x)\in I\times S$. In consequence, Lemma~\ref{Gronwall's Inequality} gives the claimed estimate.
\end{proof}

\subsection{Local existence in time}\label{Local existence in time}

We aim to construct an approximate solution locally in time. Once this is achieved, we apply the stability result of the previous section to deduce a solution as uniform limit of a sequence of approximate solutions. This is a common approach in the classical theory of ODEs (see for instance Section 7 in Amann~\cite{Amann}). 

For each $\beta > 0$ we define $N_{\mathscr{X},b}(g)$ to be the set of all $w\in\mathbb{R}^{k}$ such that $|w - E_{r,x}[g(X_{T})]| < \beta$ for some $(r,x)\in [0,T]\times S$. Because we are dealing with the transition probabilities $\mathbb{P}$, the convexity of $D$ should be required, as the lemma below indicates.

\begin{Lemma}\label{Local Boundedness Lemma}
Let $D$ be convex and $g$ be bounded away from $\partial D$, that means, there is $\varepsilon > 0$ such that $\mathrm{dist}(g(x),\partial D)\geq \varepsilon$ for all $x\in S$. Then there exists $\beta > 0$ such that
\begin{equation}\label{Local Condition 1}
\text{$N_{\mathscr{X},\beta}(g)$ is relatively compact in $D^{\circ}$.}
\end{equation}
\end{Lemma}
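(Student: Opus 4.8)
The plan is to show that the set $M:=\{E_{r,x}[g(X_{T})]:(r,x)\in[0,T]\times S\}$ stays at distance at least $\varepsilon$ from $\partial D$ and is bounded; then $\beta:=\varepsilon/2$ works almost immediately, since $N_{\mathscr{X},\beta}(g)$ is nothing but the open $\beta$-neighborhood $\{w\in\mathbb{R}^{k}:\mathrm{dist}(w,M)<\beta\}$ of $M$. Boundedness of $M$ is free because $g$ is bounded and each $E_{r,x}[g(X_{T})]$ lies in the closed convex hull of $g(S)$; the real content is the distance bound, and this is exactly where convexity of $D$ enters.

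A preliminary observation I would isolate is purely point-set topological: for any set $D$ with non-empty interior and any $z\in D^{\circ}$ one has $B(z,\mathrm{dist}(z,\partial D))\subseteq D^{\circ}$. Indeed, if a point of this ball lay in $D^{c}$, then the point at which the segment joining it to $z$ first meets $D^{c}$ would be a boundary point of $D$ strictly closer to $z$ than $\mathrm{dist}(z,\partial D)$, a contradiction (and the degenerate case is excluded since $D^{\circ}\cap\overline{D^{c}}=\varnothing$). Because $\mathrm{dist}(g(x),\partial D)\geq\varepsilon>0$ forces $g(x)\in D^{\circ}$, this already yields $B(g(x),\varepsilon)\subseteq D^{\circ}\subseteq\overline{D}$ for every $x\in S$.

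The main step is the barycenter estimate, and I expect it to be the crux. Fix $(r,x)\in[0,T]\times S$ and put $w_{0}:=E_{r,x}[g(X_{T})]$. Given any $p\in\partial D$, convexity of $D$ together with $D^{\circ}\neq\varnothing$ supplies a supporting hyperplane at $p$, i.e.\ a unit vector $\nu$ with $\langle\nu,y\rangle\leq\langle\nu,p\rangle$ for all $y\in\overline{D}$. Substituting $y=g(x)+\varepsilon'\nu\in B(g(x),\varepsilon)\subseteq\overline{D}$ and letting $\varepsilon'\uparrow\varepsilon$ gives $\langle\nu,g(x)\rangle\leq\langle\nu,p\rangle-\varepsilon$ for every $x\in S$; taking $E_{r,x}[\,\cdot\,]$ of $\langle\nu,g(X_{T})\rangle$ then yields $\langle\nu,w_{0}\rangle\leq\langle\nu,p\rangle-\varepsilon$, whence $|w_{0}-p|\geq\langle\nu,p-w_{0}\rangle\geq\varepsilon$. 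Since $p\in\partial D$ was arbitrary, $\mathrm{dist}(w_{0},\partial D)\geq\varepsilon$, and because $w_{0}$, being a barycenter of a law on $g(S)\subseteq D$, lies in $\overline{D}$, it follows that $w_{0}\in D^{\circ}$; the preliminary observation then gives $B(w_{0},\varepsilon)\subseteq D^{\circ}$. The delicate point here is that this forces the distance bound onto the averaged values and not merely onto $g(S)$ itself; the special case $\partial D=\varnothing$, i.e.\ $D=\mathbb{R}^{k}$, must be noted separately but is trivial since $M$ is then just a bounded set.

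It remains to assemble the pieces with $\beta:=\varepsilon/2$. The set $N_{\mathscr{X},\beta}(g)=\{w:\mathrm{dist}(w,M)<\varepsilon/2\}$ is bounded, hence its closure is compact and contained in $\{w:\mathrm{dist}(w,M)\leq\varepsilon/2\}$; for any such $w$ one picks $w_{0}\in M$ with $|w-w_{0}|<\varepsilon$ and concludes $w\in B(w_{0},\varepsilon)\subseteq D^{\circ}$. Therefore $\overline{N_{\mathscr{X},\beta}(g)}$ is a compact subset of $D^{\circ}$, which is precisely \eqref{Local Condition 1}.
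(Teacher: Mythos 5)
Your proof is correct, and it arrives at the same intermediate claim as the paper---namely that the set $M=\{E_{r,x}[g(X_{T})]\,|\,(r,x)\in[0,T]\times S\}$ of barycenters is bounded and uniformly bounded away from $\partial D$---but by a genuinely different argument. The paper's route is soft: it encloses $g(S)$ in a compact set $K\subset D^{\circ}$, observes that every barycenter lies in the convex hull of $K$, and invokes Carath\'eodory's convex hull theorem together with the convexity of $D^{\circ}$ to conclude that this convex hull is a compact subset of the open set $D^{\circ}$, hence at positive (but unquantified) distance from $\partial D$. You instead run a quantitative separation argument: a supporting hyperplane of $\overline{D}$ at an arbitrary $p\in\partial D$, tested against the balls $B(g(x),\varepsilon)\subseteq D^{\circ}$, gives $\langle\nu,g(x)\rangle\leq\langle\nu,p\rangle-\varepsilon$, and linearity of the expectation transfers this bound to $w_{0}=E_{r,x}[g(X_{T})]$, so that $\mathrm{dist}(w_{0},\partial D)\geq\varepsilon$ with the \emph{same} $\varepsilon$ as in the hypothesis. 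This buys an explicit admissible value $\beta=\varepsilon/2$ (indeed any $\beta<\varepsilon$ works), replaces Carath\'eodory by the supporting-hyperplane theorem, and makes visible exactly where the convexity of $D$ enters; the paper's version is shorter once the standard compactness facts are granted. Your treatment of the side issues (the ball lemma $B(z,\mathrm{dist}(z,\partial D))\subseteq D^{\circ}$, the membership $w_{0}\in\overline{D}\setminus\partial D=D^{\circ}$, and the degenerate case $\partial D=\varnothing$, i.e.\ $D=\mathbb{R}^{k}$) is also sound.
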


\begin{proof}
Let $K$ be a compact set in $D^{\circ}$ such that $g(S)\subset K$, then $\int_{S} g(x)\,P(dx)$ belongs to the convex hull of $K$ for each probability measure $P$ on $(S,\mathscr{B}(S))$. As the convexity of $D$ entails that of $D^{\circ}$, it follows from Carath{\'e}odory's Convex Hull Theorem that along with $K$ the convex hull of $K$ is a compact set in $D^{\circ}$. Hence, there is $\beta > 0$ so that $\inf_{(r,x)\in [0,T]\times S}\mathrm{dist}(E_{r,x}[g(X_{T})],\partial D) > \beta$. Since $N_{\mathscr{X},\beta}(g)$ is simply the $\beta$-neighborhood of $\{E_{r,x}[g(X_{T})]\,|\,(r,x)\in [0,T]\times S\}$, the asserted condition \eqref{Local Condition 1} follows.
\end{proof}

Until the end of this section, let $D$ be convex, $f$ be locally $\mu$-bounded, and $g$ be bounded away from $\partial D$. Due Lemma~\ref{Local Boundedness Lemma}, we can choose $\beta > 0$ satisfying \eqref{Local Condition 1}. Let $a\in B([0,T]\times S,\mathbb{R}_{+})$ be $\mu$-dominated such that $|f(t,x,w)|\leq a(t,x)$ for all $(t,x)\in [0,T]\times S$ and each $w\in\overline{N}_{\mathscr{X},\beta}(g)$, the closure of $N_{\mathscr{X},\beta}(g)$. Then
\begin{equation}\label{Local Condition 2}
E_{r,x}\bigg[\int_{r}^{T}a(s,X_{s})\,\mu(ds)\bigg] \leq \beta
\end{equation}
for all $(r,x)\in [T-\alpha,T]\times S$ and some $\alpha\in (0,T]$. The choices of $\beta$ and $\alpha$ such that \eqref{Local Condition 1} and \eqref{Local Condition 2} hold, respectively, are used to construct a $\overline{N}_{\mathscr{X},\beta}(g)$-valued solution to \eqref{MIE} on $[T-\alpha,T]$.

\begin{Proposition}\label{Approximate Solution Proposition}
Suppose that $\varepsilon\in B([0,T]\times S,\mathbb{R}_{+})$ is $\mu$-dominated and there is $\delta > 0$ so that $|f(t,x,w) - f(t,x,w')| \leq \varepsilon(t,x)$ for all $(t,x) \in [0,T]\times S$ and each $w,w'\in\overline{N}_{\mathscr{X},\beta}(g)$ with $|w-w'|<\delta$. Then there is an $\overline{N}_{\mathscr{X},\beta}(g)$-valued $\varepsilon$-approximate solution $u$ to \eqref{MIE} on $[T-\alpha,T]$. In addition, if $\mathscr{X}$ is (right-hand) Feller, $f(s,\cdot,\cdot)$ is continuous for $\mu$-a.e.~$s\in [0,T]$, and $g$ is continuous, then $u$ is (right-)continuous.
\end{Proposition}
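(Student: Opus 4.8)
The plan is to build $u$ by a backward Euler-type scheme on a fine partition of $[T-\alpha,T]$, performing on each subinterval a single Picard step whose ``zeroth'' term is the harmonic extension $E_{r,x}[u(t_i,X_{t_i})]$ of the already constructed future values. Doing only one step is exactly what makes the scheme work: the correction term is then controlled by $\int a\,d\mu$ over a short subinterval, so the error committed in the $w$-argument of $f$ stays below the threshold $\delta$, which by the hypothesis on $(\varepsilon,\delta)$ turns it into an $\varepsilon$-sized defect.

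Concretely, fix a $\mu$-integrable $\overline{a}\in B([0,T],\mathbb{R}_{+})$ with $a(s,y)\le\overline{a}(s)$ for all $y\in S$ and $\mu$-a.e.\ $s$. Since $\mu$ is atomless, $t\mapsto\int_{T-\alpha}^{t}\overline{a}\,d\mu$ is continuous, so I choose a partition $T-\alpha=t_{0}<t_{1}<\dots<t_{m}=T$ with $\int_{t_{i-1}}^{t_{i}}\overline{a}\,d\mu<\delta$ for every $i$. Then I define $u$ on the blocks $[t_{i-1},t_{i}]$ by downward recursion in $i$, starting from $u(t_{m},\cdot):=g$: given $u(t_{i},\cdot)$, set
\[
u_{0}^{(i)}(r,x):=E_{r,x}[u(t_{i},X_{t_{i}})],\qquad
u(r,x):=u_{0}^{(i)}(r,x)-E_{r,x}\Big[\int_{r}^{t_{i}}f\big(s,X_{s},u_{0}^{(i)}(s,X_{s})\big)\,\mu(ds)\Big]
\]
for $r\in[t_{i-1},t_{i}]$. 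Borel measurability and consistency of $u(r,\cdot)$ follow from the corresponding properties of the transition probabilities (condition (ii) of $\mathscr{X}$), and the two integrability requirements in the definition of an $\varepsilon$-approximate solution are immediate from $|f(\cdot,\cdot,w)|\le a$ on $\overline{N}_{\mathscr{X},\beta}(g)$ once $u$ is known to be $\overline{N}_{\mathscr{X},\beta}(g)$-valued. The latter is a ``budget'' induction: using the Markov property (tower rule through $t_i$) one shows inductively that $|u(r,x)-E_{r,x}[g(X_{T})]|\le E_{r,x}\big[\int_{r}^{T}a(s,X_{s})\,\mu(ds)\big]$ on $[T-\alpha,T]\times S$, which is $\le\beta$ by \eqref{Local Condition 2}; as $E_{r,x}[g(X_{T})]$ is one of the centres of $N_{\mathscr{X},\beta}(g)$, this places $u(r,x)$ in $\overline{N}_{\mathscr{X},\beta}(g)$. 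The same estimate with $t_{i}$ in place of $T$ shows $u_{0}^{(i)}$ and the block iterate are $\overline{N}_{\mathscr{X},\beta}(g)$-valued, so $f$ may be evaluated along them.

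Next I would check the defect on a single block and then glue. On $[t_{i-1},t_{i}]$ the Markov property gives $E_{r,x}[u(t,X_{t})]-u(r,x)=E_{r,x}\big[\int_{r}^{t}f(s,X_{s},u_{0}^{(i)}(s,X_{s}))\,\mu(ds)\big]$ for $t_{i-1}\le r\le t\le t_{i}$, so the block defect equals $\big|E_{r,x}\big[\int_{r}^{t}(f(s,X_{s},u_{0}^{(i)}(s,X_{s}))-f(s,X_{s},u(s,X_{s})))\,\mu(ds)\big]\big|$; since $|u(s,y)-u_{0}^{(i)}(s,y)|\le E_{s,y}\big[\int_{t_{i-1}}^{t_{i}}a(s',X_{s'})\,\mu(ds')\big]\le\int_{t_{i-1}}^{t_{i}}\overline{a}\,d\mu<\delta$ with both arguments in $\overline{N}_{\mathscr{X},\beta}(g)$, the hypothesis on $(\varepsilon,\delta)$ bounds the integrand by $\varepsilon(s,X_{s})$, hence the block defect is $\le E_{r,x}\big[\int_{r}^{t}\varepsilon(s,X_{s})\,\mu(ds)\big]$. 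Because consecutive blocks match exactly at the $t_{i}$'s (by consistency, $u(t_{i},X_{t_{i}})=u(t_{i},x)$ $P_{t_{i},x}$-a.s.), inserting the partition points of $[r,t]$ and iterating the Markov property telescopes the global defect into a sum of block defects that sums to $E_{r,x}\big[\int_{r}^{t}\varepsilon(s,X_{s})\,\mu(ds)\big]$; with $u(T,\cdot)=g$ this yields the desired $\varepsilon$-approximate solution on $[T-\alpha,T]$.

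For the continuity statement I would run the same recursion and note that each step preserves (right-)continuity: if $u(t_{i},\cdot)$ is continuous then $(r,x)\mapsto E_{r,x}[u(t_{i},X_{t_{i}})]$ is (right-)continuous by the (right-hand) Feller property, so $u_{0}^{(i)}$ is; hence $\varphi(s,y):=f(s,y,u_{0}^{(i)}(s,y))$ is $\mu$-dominated with $\varphi(s,\cdot)$ continuous for $\mu$-a.e.\ $s$, so the correction term is (right-)continuous by \eqref{Feller Condition}, and $u(t_{i-1},\cdot)$ is continuous as a time-section; the concatenation stays (right-)continuous across the $t_{i}$'s by the exact matching. The one delicate point — and the reason a single Picard step is used and the zeroth term kept unfrozen — is that $s\mapsto u(s,X_{s})$ carries a genuine martingale part (the harmonic extension of $g$), so $u$ has no uniform-in-$x$ modulus of continuity in time; one therefore cannot replace $u(s,X_{s})$ by $u(t_{i},X_{t_{i}})$ or $u(t_{i},X_{s})$. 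Only after comparing $u(s,X_{s})$ with the conditioned value $u_{0}^{(i)}(s,X_{s})=E_{s,X_{s}}[u(t_{i},X_{t_{i}})]$ does the difference become $O\big(\int_{t_{i-1}}^{t_{i}}\overline{a}\,d\mu\big)$, hence $<\delta$ on a fine enough block; handling this correctly, together with the measurability/consistency bookkeeping, is the bulk of the work.
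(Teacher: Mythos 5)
Your proposal is correct and follows essentially the same route as the paper: the paper's proof also partitions $[T-\alpha,T]$ so that $E_{r,x}\big[\int_{r}^{t}a(s,X_{s})\,\mu(ds)\big]<\delta$ on each block, performs on each block exactly one Picard step with the frozen argument $E_{s,X_{s}}[u_{i}(t_{i},X_{t_{i}})]$ (your $u_{0}^{(i)}(s,X_{s})$), proves the same budget estimate \eqref{Sequence Property} to keep everything $\overline{N}_{\mathscr{X},\beta}(g)$-valued via \eqref{Local Condition 2}, bounds the per-block defect by $\varepsilon$ using the $(\varepsilon,\delta)$ hypothesis, and gets continuity inductively from \eqref{Feller Condition}. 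No gaps.
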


\begin{proof}
At first, since $a$ is $\mu$-dominated, there is $\eta\in (0,\alpha]$ such that $E_{r,x}\big[\int_{r}^{t}a(s,X_{s})\,\mu(ds)\big] < \delta$ for all $r,t\in [T-\alpha,T]$ with $r\leq t< r + \eta$ and each $x\in S$. Given $\eta$, we choose $n\in\mathbb{N}$ and $t_{0},\dots,t_{n}\in [T-\alpha,T]$ such that
\[
T-\alpha = t_{n} < \cdots  < t_{0} = T\quad\text{and}\quad\max_{i\in\{1,\dots,n\}} (t_{i} - t_{i-1}) < \eta.
\]
Starting with $u_{0}:[T-\alpha,T]\times S\rightarrow\overline{N}_{\mathscr{X},\beta}(g)$ given by $u_{0}(r,x):=E_{r,x}[g(X_{T})]$, we recursively introduce a sequence $(u_{i})_{i\in\{1,\dots,n\}}$ of consistent Borel measurable maps, by letting for each $i\in\{0,\dots,n-1\}$ the map $u_{i+1}:[t_{i+1},t_{i}]\times S\rightarrow\overline{N}_{\mathscr{X},\beta}(g)$ be defined via
\[
u_{i+1}(r,x) := E_{r,x}[u_{i}(t_{i},X_{t_{i}})] - E_{r,x}\bigg[\int_{r}^{t_{i}}f(s,X_{s},E_{s,X_{s}}[u_{i}(t_{i},X_{t_{i}})])\,\mu(ds)\bigg].
\]
It follows by induction over $i\in\{1,\dots,n\}$ that $u_{i}$ is indeed a well-defined consistent Borel measurable map taking all its values in $\overline{N}_{\mathscr{X},\beta}(g)$ such that
\begin{equation}\label{Sequence Property}
|E_{r,x}[u_{i}(t,X_{t})] - u_{i}(r,x)| \leq E_{r,x}\bigg[\int_{r}^{t} a(s,X_{s})\,\mu(ds)\bigg]
\end{equation}
for all $r,t\in [t_{i},t_{i-1}]$ with $r\leq t$ and each $x\in S$. This is an immediate consequence of the facts that $u_{i}(t_{i},x) = u_{i-1}(t_{i},x)$ and $|E_{r,x}[u_{i}(t,X_{t})] - u_{0}(r,x)|$ $\leq E_{r,x}\big[\int_{t}^{t_{0}}a(t',X_{t'})\,\mu(dt')\big]$ for each $r,t\in [t_{i},t_{i-1}]$ with $r\leq t$ and every $x\in S$.

The crucial outcome of this construction is that if we define $u:[T-\alpha,T]\times S\rightarrow\overline{N}_{\mathscr{X},\beta}(g)$ by $u(r,x) := u_{i}(r,x)$ with $i\in\{1,\dots,n\}$ such that $r\in [t_{i},t_{i-1}]$, then $u$ is an $\varepsilon$-approximate solution to \eqref{MIE} on $[T-\alpha,T]$. To see this, let $i\in\{1,\dots,n\}$, then
\begin{align*}
\bigg|E_{r,x}[u(t,X_{t})] &- u(r,x) - E_{r,x}\bigg[\int_{r}^{t}f(s,X_{s},u(s,X_{s}))\,\mu(ds)\bigg]\bigg|\\
&=\bigg|E_{r,x}\bigg[\int_{r}^{t}f(s,X_{s},E_{s,X_{s}}[u_{i-1}(t_{i-1},X_{t_{i-1}})]) - f(s,X_{s},u_{i}(s,X_{s}))\,\mu(ds)\bigg]\bigg|\\
&\leq E_{r,x}\bigg[\int_{r}^{t}\varepsilon(s,X_{s})\, \mu(ds)\bigg]
\end{align*}
for every $r,t\in [t_{i},t_{i-1}]$ with $r\leq t$ and each $x\in S$, since $u_{i-1}(t_{i-1},X_{t_{i-1}}) = u_{i}(t_{i-1},X_{t_{i-1}})$ and from $t_{i-1} - s \leq \eta$ in combination with \eqref{Sequence Property} we infer that $|E_{s,X_{s}}[u_{i}(t_{i-1},X_{t_{i-1}})] - u_{i}(s,X_{s})|< \delta$ for all $s\in [t_{i},t_{i-1}]$. Hence, the first assertion follows.

Let us now suppose that $\mathscr{X}$ is (right-hand) Feller, $f(s,\cdot,\cdot)$ is continuous for $\mu$-a.e.~$s\in [0,T]$, and $g$ is continuous. Then for each non-degenerate interval $I$ in $[0,T]$ and every right-continuous $\tilde{u}\in B(I\times S,D)$, we see readily that $f(s,\cdot,\tilde{u}(s,\cdot))$ is continuous for $\mu$-a.e.~$s\in [0,T]$. In combination with \eqref{Feller Condition}, it follows inductively that $u_{1},\dots,u_{n}$ are (right-)continuous, which yields the (right-)continuity of $u$.
\end{proof}

By constructing a suitable sequence of approximate solutions, a local existence result can be derived.

\begin{Proposition}\label{Local Existence Proposition}
Let $f\in BC_{\mu}^{1-}([0,T]\times S\times D,\mathbb{R}^{k})$, then there is a unique admissible solution $u$ to \eqref{MIE} on $[T-\alpha,T]$, which is $\overline{N}_{\mathscr{X},\beta}(g)$-valued. Moreover, if $\mathscr{X}$ is (right-hand) Feller, $f(s,\cdot,\cdot)$ is continuous for $\mu$-a.e.~$s\in [0,T]$, and $g$ is continuous, then $u$ is (right-)continuous.
\end{Proposition}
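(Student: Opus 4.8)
The plan is to combine the two preceding results: the local existence of an $\varepsilon$-approximate solution (Proposition~\ref{Approximate Solution Proposition}) with the stability result (Proposition~\ref{Limit Proposition}), and then to verify that the limiting solution is genuinely admissible (not merely $\mu$-admissible) and $\overline{N}_{\mathscr{X},\beta}(g)$-valued. First I would fix $\beta > 0$ so that \eqref{Local Condition 1} holds and $\alpha \in (0,T]$ so that \eqref{Local Condition 2} holds; these choices are available since $D$ is convex, $f$ is locally $\mu$-bounded (being in $BC_{\mu}^{1-}$), and $g$ is bounded away from $\partial D$. Recall that $\overline{N}_{\mathscr{X},\beta}(g)$ is a compact subset of $D^{\circ}$ by Lemma~\ref{Local Boundedness Lemma}, and that on $[0,T]\times S\times\overline{N}_{\mathscr{X},\beta}(g)$ the map $f$ is bounded by the $\mu$-dominated function $a$; moreover, since $f\in BC_{\mu}^{1-}$ and $\overline{N}_{\mathscr{X},\beta}(g)$ is compact, the argument used in Corollary~\ref{Uniqueness Corollary} (via Proposition 6.4 in Amann~\cite{Amann}) shows that $f$ restricted to a neighborhood $W$ of $\overline{N}_{\mathscr{X},\beta}(g)$ in $D$ is Lipschitz $\mu$-continuous.

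The main construction step is to produce a sequence of approximate solutions with vanishing error. For each $n\in\mathbb{N}$, pick $\varepsilon_{n}\in B([0,T]\times S,\mathbb{R}_{+})$ $\mu$-dominated and $\delta_{n} > 0$ such that $|f(t,x,w)-f(t,x,w')|\leq\varepsilon_{n}(t,x)$ whenever $w,w'\in\overline{N}_{\mathscr{X},\beta}(g)$ with $|w-w'|<\delta_{n}$ and such that $E_{r,x}\big[\int_{0}^{T}\varepsilon_{n}(s,X_{s})\,\mu(ds)\big]\to 0$ uniformly in $(r,x)$; this is possible because on the compact set $\overline{N}_{\mathscr{X},\beta}(g)$ the Lipschitz $\mu$-continuity of $f|([0,T]\times S\times W)$ lets us take $\varepsilon_{n}(t,x) := \lambda(t,x)/n$ with $\lambda$ the $\mu$-dominated Lipschitz coefficient, so that $\delta_{n}$ can be chosen arbitrarily (the modulus bound holds for all $w,w'$). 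Proposition~\ref{Approximate Solution Proposition} then furnishes, for each $n$, an $\overline{N}_{\mathscr{X},\beta}(g)$-valued $\varepsilon_{n}$-approximate solution $u_{n}$ to \eqref{MIE} on $[T-\alpha,T]$, which is (right-)continuous in the Feller case. I would then apply Proposition~\ref{Limit Proposition} on the admissible interval $I=[T-\alpha,T]$ with $g_{n}:=g$ for all $n$: hypothesis (i) holds by the choice of $\varepsilon_{n}$, and (ii), (iii) hold because all $u_{n}$ take values in the fixed compact set $K:=\overline{N}_{\mathscr{X},\beta}(g)\subset D$. Hence $(u_{n})_{n\in\mathbb{N}}$ converges uniformly to the unique $\mu$-admissible solution $u$ to \eqref{MIE} on $[T-\alpha,T]$.

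It remains to upgrade $\mu$-admissibility to admissibility. Since each $u_{n}$ is $\overline{N}_{\mathscr{X},\beta}(g)$-valued and the convergence is uniform, the limit $u$ satisfies $u([T-\alpha,T]\times S)\subset\overline{N}_{\mathscr{X},\beta}(g)$, which is compact and contained in $D^{\circ}$; thus $u([r,T]\times S)$ is relatively compact in $D^{\circ}$ for every $r\in[T-\alpha,T]$, so $u$ is admissible. Uniqueness among admissible (indeed $\mu$-admissible) solutions is exactly Corollary~\ref{Uniqueness Corollary}. Finally, in the Feller case with $f(s,\cdot,\cdot)$ continuous for $\mu$-a.e.~$s$ and $g$ continuous, each $u_{n}$ is (right-)continuous by Proposition~\ref{Approximate Solution Proposition}, and a uniform limit of (right-)continuous maps on $[T-\alpha,T]\times S$ is (right-)continuous, so $u$ inherits this property.

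I expect the only genuine subtlety to be the passage from $\mu$-admissibility (which is what Corollary~\ref{Uniqueness Corollary} and Proposition~\ref{Limit Proposition} are phrased in terms of) to the stronger notion of admissibility required in the statement; this is handled cleanly here precisely because the local construction confines all iterates to the fixed compact set $\overline{N}_{\mathscr{X},\beta}(g)\subset D^{\circ}$, so no extra work beyond observing that uniform limits stay in a closed set is needed. A secondary point to be careful about is ensuring the $\varepsilon_{n}$ are simultaneously $\mu$-dominated and give a uniformly vanishing integrated error while the associated $\delta_{n}$ remain usable in Proposition~\ref{Approximate Solution Proposition}; using the global Lipschitz $\mu$-bound on the compact neighborhood $W$ makes this automatic.
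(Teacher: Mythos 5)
Your proposal is correct and follows essentially the same route as the paper: obtain a $\mu$-dominated Lipschitz coefficient $\lambda$ on the compact set $\overline{N}_{\mathscr{X},\beta}(g)$, feed $\varepsilon_{n}=\lambda/n$ into Proposition~\ref{Approximate Solution Proposition}, pass to the uniform limit via Proposition~\ref{Limit Proposition}, and conclude admissibility and (right-)continuity from the fact that all iterates stay in $\overline{N}_{\mathscr{X},\beta}(g)$. One small correction: with $\varepsilon_{n}=\lambda/n$ the tolerance $\delta_{n}$ cannot be chosen arbitrarily (the bound $|f(t,x,w)-f(t,x,w')|\leq\lambda(t,x)/n$ certainly fails for distant $w,w'$); you need $\delta_{n}\leq 1/n$, which is of course harmless.
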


\begin{proof}
The uniqueness assertion follows directly from Corollary~\ref{Uniqueness Corollary}. To establish existence, we note that, as $\overline{N}_{\mathscr{X},\beta}(g)$ is compact, there exists a $\mu$-dominated $\lambda\in B([T-\alpha,T]\times S,\mathbb{R}_{+})$ such that
\[
|f(t,x,w) - f(t,x,w')| \leq \lambda(t,x)|w-w'|
\]
for all $(t,x)\in [T-\alpha,T]\times S$ and each $w,w'\in\overline{N}_{\mathscr{X},\beta}(g)$. Thus, Proposition~\ref{Approximate Solution Proposition} provides some $\overline{N}_{\mathscr{X},\beta}(g)$-valued $(\lambda/n)$-approximate solution $u_{n}$ to \eqref{MIE} on $[T-\alpha,T]$ for each $n\in\mathbb{N}$. Additionally, if $\mathscr{X}$ is (right-hand) Feller, $f(s,\cdot,\cdot)$ is continuous for $\mu$-a.e.~$s\in [0,T]$, and $g$ is continuous, then $u_{n}$ is (right-)continuous. 

Next, Proposition~\ref{Limit Proposition} entails that $(u_{n})_{n\in\mathbb{N}}$ converges uniformly to a $\overline{N}_{\mathscr{X},\beta}(g)$-valued solution $u$ to \eqref{MIE} on $[T-\alpha,T]$, which proves the first claim. Since the uniform limit of a sequence of $\mathbb{R}^{k}$-valued (right-)continuous maps on $[T-\alpha,T]\times S$ is again (right-)continuous, the second assertion follows directly from what we have just shown.
\end{proof}

Now, we prove a fixed-point result, which we need later on.

\begin{Lemma}\label{Fixed-Point Lemma}
Let $I$ be a compact admissible interval, $\mathscr{H}$ be a closed set in $B_{b}(I\times S,\mathbb{R}^{k})$, and $\Psi:\mathscr{H}\rightarrow\mathscr{H}$ be a map for which there is a $\mu$-dominated $\lambda\in B(I\times S,\mathbb{R}_{+})$ such that
\begin{equation}\label{Fixed-Point Condition}
|\Psi(u) - \Psi(v)|(r,x) \leq E_{r,x}\bigg[\int_{r}^{T}\lambda(s,X_{s})|u-v|(s,X_{s})\,\mu(ds)\bigg]
\end{equation}
for all $u,v\in\mathscr{H}$ and each $(r,x)\in I\times S$. Then for every $u_{0}\in\mathscr{H}$, the sequence $(u_{n})_{n\in\mathbb{N}_{0}}$, recursively given by $u_{n}:=\Psi(u_{n-1})$ for all $n\in\mathbb{N}$, converges uniformly to the unique fixed-point of $\Psi$.
\end{Lemma}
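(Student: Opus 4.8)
The plan is to run a Banach-type fixed-point argument, but with respect to a weighted sup-norm adapted to the Markovian structure, rather than attempting to iterate the raw contraction estimate \eqref{Fixed-Point Condition} directly. First I would observe that since $\lambda$ is $\mu$-dominated, there is a locally $\mu$-integrable $\overline{\lambda}\in B([0,T],\mathbb{R}_{+})$ with $\lambda(\cdot,y)\leq\overline{\lambda}$ for all $y\in S$ $\mu$-a.s. Iterating \eqref{Fixed-Point Condition} and using the Markov property of $\mathscr{X}$ together with integration by parts (exactly as in the proof of Lemma~\ref{Gronwall's Inequality}), one gets for all $n\in\mathbb{N}$ and each $(r,x)\in I\times S$ the estimate
\[
|\Psi^{n}(u) - \Psi^{n}(v)|(r,x) \leq E_{r,x}\bigg[\frac{1}{n!}\bigg(\int_{r}^{T}\lambda(s,X_{s})\,\mu(ds)\bigg)^{n}\bigg]\,\|u-v\|_{\infty} \leq \frac{1}{n!}\bigg(\int_{0}^{T}\overline{\lambda}(s)\,\mu(ds)\bigg)^{n}\|u-v\|_{\infty},
\]
where $\|\cdot\|_{\infty}$ denotes the sup-norm on $B_{b}(I\times S,\mathbb{R}^{k})$. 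Hence $\Psi^{n}$ is a contraction on $\mathscr{H}$ for $n$ large enough, since $(1/n!)(\int_{0}^{T}\overline{\lambda}\,d\mu)^{n}\to 0$.

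Next I would invoke the standard generalization of Banach's fixed-point theorem: if $\mathscr{H}$ is a nonempty closed subset of the Banach space $B_{b}(I\times S,\mathbb{R}^{k})$ (complete under $\|\cdot\|_{\infty}$) and some iterate $\Psi^{m}$ is a strict contraction on $\mathscr{H}$, then $\Psi$ has a unique fixed-point $u_{*}\in\mathscr{H}$, and for every $u_{0}\in\mathscr{H}$ the Picard sequence $u_{n}=\Psi(u_{n-1})$ converges to $u_{*}$. I would spell this out briefly: $\Psi^{m}$ has a unique fixed-point $u_{*}$ by Banach, and since $\Psi^{m}(\Psi(u_{*}))=\Psi(\Psi^{m}(u_{*}))=\Psi(u_{*})$, uniqueness forces $\Psi(u_{*})=u_{*}$; any fixed-point of $\Psi$ is one of $\Psi^{m}$, so it is unique. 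For the convergence, fix $u_{0}\in\mathscr{H}$ and write $n=qm+\ell$ with $0\le \ell <m$; then $\|u_{n}-u_{*}\|_{\infty} = \|\Psi^{qm}(u_{\ell})-\Psi^{qm}(u_{*})\|_{\infty}\le c^{q}\|u_{\ell}-u_{*}\|_{\infty}$ where $c<1$ is the contraction constant of $\Psi^{m}$, and $\max_{0\le\ell<m}\|u_{\ell}-u_{*}\|_{\infty}<\infty$, so $u_{n}\to u_{*}$ uniformly.

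The only genuine point requiring care — and the main obstacle — is the iteration estimate: one must check that the $n$-fold composition of the bound in \eqref{Fixed-Point Condition} really produces the factor $1/n!$ times the $n$-th power of the integral, and that the requisite integrability (finiteness of the iterated expectations, validity of Fubini/the Markov property at each step, the integration-by-parts identity $\int_{r}^{T}(\int_{r}^{t}\overline\lambda\,d\mu)^{n-1}\overline\lambda(t)\,\mu(dt) = \frac{1}{n}(\int_{r}^{T}\overline\lambda\,d\mu)^{n}$) is legitimate. Since $\lambda$ is $\mu$-dominated by the locally $\mu$-integrable $\overline{\lambda}$ and $I$ is compact, $\int_{0}^{T}\overline\lambda\,d\mu<\infty$, so all these quantities are finite and the manipulations are exactly those already carried out in Lemma~\ref{Gronwall's Inequality}; I would simply reference that computation. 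Everything else is the routine abstract fixed-point argument, so the proof should be short.
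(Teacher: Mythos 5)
Your proof is correct and rests on the same key computation as the paper's: iterating \eqref{Fixed-Point Condition} via the Markov property and integration by parts to obtain the factor $1/n!$ times the $n$-th power of $\int\lambda\,d\mu$, exactly as in the proof of Lemma~\ref{Gronwall's Inequality}. The only difference is packaging --- the paper bounds the increments $|u_{n+1}-u_{n}|$ by $\Delta=|\Psi(u_{0})-u_{0}|$ and shows the Picard sequence is uniformly Cauchy directly (deriving uniqueness from Lemma~\ref{Gronwall's Inequality}), whereas you phrase the same estimate as ``$\Psi^{m}$ is a strict contraction for large $m$'' and invoke the corresponding extension of Banach's theorem; both conclusions are equally valid given that $\mathscr{H}$ is closed in the Banach space $B_{b}(I\times S,\mathbb{R}^{k})$.
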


\begin{proof}
Because the uniqueness assertion can be easily inferred from Lemma~\ref{Gronwall's Inequality}, we just show that $(u_{n})_{n\in\mathbb{N}_{0}}$ converges uniformly to some fixed-point of $\Psi$. By induction,
\[
|u_{n+1} - u_{n}|(r,x) \leq E_{r,x}\bigg[\int_{r}^{T}\left(\int_{r}^{t}\lambda(s,X_{s})\,\mu(ds)\right)^{n-1}\frac{\lambda(t,X_{t})}{(n-1)!} \Delta(t,X_{t})\,\mu(dt)\bigg]
\]
for all $n\in\mathbb{N}$ and every $(r,x)\in I\times S$, where $\Delta:=|\Psi(u_{0}) - u_{0}|$. From the triangle inequality and integration by parts we obtain that
\[
|u_{m} - u_{n}|(r,x) \leq \sum_{i=n}^{m-1}\frac{1}{i!}E_{r,x}\bigg[\left(\int_{r}^{T}\lambda(s,X_{s})\,\mu(ds)\right)^{i}\bigg]\sup_{(s,y)\in [r,T]\times S}\Delta(s,y)
\]
for all $m,n\in\mathbb{N}$ with $m > n$ and each $(r,x)\in I\times S$. This shows that $(u_{n})_{n\in\mathbb{N}_{0}}$ is a uniformly Cauchy sequence. Since $\mathscr{H}$ is closed in $B_{b}(I\times S,\mathbb{R}^{k})$, it converges uniformly to some $u\in\mathscr{H}$. As $(u_{n+1})_{n\in\mathbb{N}_{0}}$ also converges uniformly to $\Psi(u)$, we conclude that $u=\Psi(u)$.
\end{proof}

Let us indicate another local existence approach.

\begin{Remark}
The set $\mathscr{H}:=B_{b}([T-\alpha,T]\times S,\overline{N}_{\mathscr{X},\beta}(g))$ is closed in $B_{b}([T-\alpha,T]\times S,\mathbb{R}^{k})$ and \eqref{Local Condition 2} guarantees that the map $\Psi:\mathscr{H}\rightarrow B([T-\alpha,T]\times S,\mathbb{R}^{k})$ defined via
\[
\Psi(u)(r,x):= E_{r,x}[g(X_{T})] - E_{r,x}\bigg[\int_{r}^{T}f(s,X_{s},u(s,X_{s}))\,\mu(ds)\bigg]
\]
maps $\mathscr{H}$ into itself. So, let $f$ be locally Lipschitz $\mu$-continuous, then there is a $\mu$-dominated $\lambda\in B([T-\alpha,T]\times S,\mathbb{R}_{+})$ satisfying \eqref{Fixed-Point Condition} for all $u,v\in\mathscr{H}$ and each $(r,x)\in [T-\alpha,T]\times S$. For this reason, Lemma~\ref{Fixed-Point Lemma} implies that $\Psi$ has a unique fixed-point $u$, which is exactly the unique admissible solution to \eqref{MIE} on $[T-\alpha,T]$ that takes all its values in $\overline{N}_{\mathscr{X},b}(g)$. 

Moreover, if $\mathscr{X}$ is (right-hand) Feller, $f(s,\cdot,\cdot)$ is continuous for $\mu$-a.e.~$s\in [0,T]$, and $g$ is continuous, then from \eqref{Feller Condition} we see that $\Psi$ preserves (right-)continuity in the sense that $\Psi(\tilde{u})$ is (right-)continuous whenever $\tilde{u}\in\mathscr{H}$ is. Thus, in this case, $u$ is (right-)continuous as uniform limit of a sequence of (right-)continuous maps in $\mathscr{H}$.
\end{Remark}

\section{Proofs of the main results}\label{Proofs of the main results}

\subsection{Proof of Theorem~\ref{Non-Extendibility Theorem}}

After having constructed solutions locally in time, we derive unique non-extendible admissible solutions and provide conditions ensuring their continuity. In this regard, the proof of Theorem 7.6 in Amann~\cite{Amann} has been be a good source for ideas.

\begin{proof}[Proof of Theorem~\ref{Non-Extendibility Theorem}]
We begin with the first claim and define $I_{g}$ to be the set consisting of $\{T\}$ and of all $t\in [0,T)$ for which \eqref{MIE} admits an admissible solution on $[t,T]$. By Proposition~\ref{Local Existence Proposition}, we have $\{T\}\subsetneq I_{g}$ and hence, $t_{g}^{-}=\inf I_{g} < T$. Let $t\in (t_{g}^{-},T]$, then there is $s\in I_{g}$ with $s < t$, which means that there is an admissible solution $u$ to \eqref{MIE} on $[s,T]$. As $u|([t,T]\times S)$ is an admissible solution to \eqref{MIE} on $[t,T]$, we get that $t\in I_{g}$. Thus, $I_{g}$ is an admissible interval.

To verify that $I_{g}$ is open in $[0,T]$, we have to show that if $I_{g}\neq [0,T]$, then $t_{g}^{-}\notin I_{g}$. On the contrary, assume that $I_{g}\neq [0,T]$, but $t_{g}^{-}\in I_{g}$. Then $t_{g}^{-} > 0$ and there is an admissible solution $u$ to \eqref{MIE} on $[t_{g}^{-},T]$. Since $u(t_{g}^{-},\cdot)$ is both bounded and bounded away from $\partial D$, Proposition~\ref{Local Existence Proposition} entails that the Markovian terminal value problem \eqref{MIE} with $T$ and $g$ replaced by $t_{g}^{-}$ and $u(t_{g}^{-},\cdot)$, respectively, has an admissible solution $v$ on $[t_{g}^{-}-\alpha,t_{g}^{-}]$ for some $\alpha\in (0,t_{g}^{-}]$. Consequently, the map $w:[t_{g}^{-}-\alpha,T]\times S\rightarrow D^{\circ}$ given by $w(r,x):= u(r,x)$, if $r\geq t_{g}^{-}$, and $w(r,x):= v(r,x)$, otherwise, is another admissible solution to \eqref{MIE} on $[t_{g}^{-}-\alpha,T]$ extending $u$ and $v$. We conclude that $t_{g}^{-}-\alpha\in I_{g}$, which contradicts the definition of $t_{g}^{-}$.

Let us now introduce the unique non-extendible admissible solution to \eqref{MIE}. We recall that if $r,t\in I_{g}$ satisfy $r\leq t$, and $u$, $v$ are two admissible solutions to \eqref{MIE} on $[r,T]$ and $[t,T]$, respectively, then $u=v$ on $[t,T]\times S$, due to Corollary~\ref{Uniqueness Corollary}. So, for each $r\in I_{g}$ we can mark the unique admissible solution to \eqref{MIE} on $[r,T]$ by $u_{r}$. Then 
\[
u_{g}:I_{g}\times S\rightarrow D^{\circ}, \quad u_{g}(r,x):= u_{r}(r,x)
\]
is the unique non-extendible admissible solution to \eqref{MIE}. In fact, if $t_{g}^{-}\in I_{g}$, which occurs if and only if $t_{g}^{-} = 0$ and $I_{g} = [0,T]$, then $u_{g}(r,x)=u_{t_{g}^{-}}(r,x)$ for all $(r,x)\in [0,T]\times S$. This in turn implies that $u_{g}$ is well-defined and a global admissible solution. Now, let instead $t_{g}^{-}\notin I_{g}$, then $I_{g} = (t_{g}^{-},T]$. In this case, we pick a strictly decreasing sequence $(t_{n})_{n\in\mathbb{N}}$ in $I_{g}$ with $\lim_{n\uparrow\infty} t_{n} = t_{g}^{-}$, then
\[
u_{g}^{-1}(B) = \bigcup_{n\in\mathbb{N}} u_{t_{n}}^{-1}(B)\in\mathscr{B}(I_{g}\times S)
\]
for all $B\in\mathscr{B}(D)$, since $u_{t_{n}}^{-1}(B)\in\mathscr{B}([t_{n},T]\times S)$ for each $n\in\mathbb{N}$. Thus, $u_{g}$ is Borel measurable. The representation $u_{g}|([r,T]\times S) = u_{r}$ for each $r\in I_{g}$ implies that $u_{g}$ is an admissible solution to \eqref{MIE} on $I_{g}$. Finally, suppose that $I$ is an admissible interval with $I_{g}\subsetneq I$ and $u$ is an admissible solution to \eqref{MIE} on $I$, then there is $t\in I$ with $t\leq t_{g}^{-}$. By the definition of $I_{g}$, we obtain that $t\in I_{g}$, which is a contradiction to $I_{g}= (t_{g}^{-},T]$. This justifies that $u_{g}$ is non-extendible.

We turn to the second claim. By way of contradiction, assume that $I_{g}\neq [0,T]$, but \eqref{Boundary and Growth Condition} fails. Then $I_{g} = (t_{g}^{-},T]$, and there are $\varepsilon\in (0,1/\sqrt{2})$ and a sequence $(t_{n})_{n\in\mathbb{N}}$ in $I_{g}$ with $\lim_{n\uparrow\infty} t_{n} = t_{g}^{-}$ such that
\[
\inf_{x\in S}\min\left\{\mathrm{dist}(u_{g}(t_{n},x),\partial D),\frac{1}{1+|u_{g}(t_{n},x)|}\right\} \geq 2\varepsilon
\]
for every $n\in\mathbb{N}$. As $D_{\eta}:=\{w\in D\,|\, \mathrm{dist}(z,\partial D)\geq \eta \text{ and } |w|\leq 1/\eta\}$ is readily seen to be a convex compact set in $D^{\circ}$ for each $\eta\in (0,2\varepsilon]$, it holds that $E_{r,x}[u_{g}(t_{n},X_{t_{n}})]\in D_{2\varepsilon}$ for all $n\in\mathbb{N}$ and each $(r,x)\in [0,t_{n}]\times S$. Let $a\in B([t_{g}^{-},T]\times S,\mathbb{R}_{+})$ be $\mu$-dominated and fulfill
\[
|f(t,x,w)|\leq a(t,x)
\]
for every $(t,x,w)\in [t_{g}^{-},T]\times S\times D_{\varepsilon}$, then there exists some $\delta\in (0,T-t_{g}^{-}]$ such that $\sup_{x\in S}E_{r,x}\big[\int_{r}^{t}a(s,X_{s})\,\mu(ds)\big]$ $< \varepsilon$ for all $r,t\in [t_{g}^{-},T]$ with $r\leq t< r + \delta$. This entails that
\begin{equation}\label{Boundary Contradiction}
\text{$u_{g}(t,S)$ is relatively compact in $D_{\varepsilon}^{\circ}$}
\end{equation}
for every $n\in\mathbb{N}$ and each $t\in (t_{n}-\delta_{n},t_{n}]$, where $\delta_{n}:=\delta\wedge (t_{n} - t_{g}^{-})$. Indeed, suppose this is false, then there is $n\in\mathbb{N}$ for which $u_{g}(t,S)$ fails to be relatively compact in $D_{\varepsilon}^{\circ}$ for at least one $t\in (t_{n}-\delta_{n},t_{n}]$. We set 
\[
s_{n}:=\sup\{t\in (t_{n}-\delta_{n},t_{n}]\,|\, \text{$u_{g}(t,S)$ is not relatively compact in $D_{\varepsilon}^{\circ}$}\},
\]
then another application of Proposition~\ref{Local Existence Proposition} shows that $u_{g}(s_{n},S)$ cannot be relatively compact in $D_{\varepsilon}^{\circ}$. In particular, $s_{n} < t_{n}$, as $u_{g}(t_{n},S) \subset D_{2\varepsilon}$. These considerations imply that
\begin{equation*}
|E_{s_{n},x}[ u_{g}(t_{n},X_{t_{n}})] - u_{g}(s_{n},x)| \leq E_{s_{n},x}\bigg[\int_{s_{n}}^{t_{n}} a(s,X_{s})\,\mu(ds)\bigg] < \varepsilon
\end{equation*}
for every $x\in S$, since $t_{n} - s_{n} < \delta_{n} \leq \delta$. From $E_{s_{n},x}[u_{g}(t_{n},X_{t_{n}})]\in D_{2\varepsilon}$ and $\varepsilon^{2} < 1/2$ it follows that $|u_{g}(s_{n},x)| < |E_{s_{n},x}[u_{g}(t_{n},X_{t_{n}})]| + \varepsilon$ $\leq 1/(2\varepsilon) + \varepsilon < 1/\varepsilon$ for each $x\in S$. Moreover,
\begin{align*}
\mathrm{dist}(u_{g}(s_{n},x),\partial D) &\geq \mathrm{dist}(E_{s_{n},x}[u_{g}(t_{n},X_{t_{n}})],\partial D) - |E_{s_{n},x}[u_{g}(t_{n},X_{t_{n}})] - u_{g}(s_{n},x)|\\
& \geq 2 \varepsilon -  |E_{s_{n},x}[u_{g}(t_{n},X_{t_{n}})] - u_{g}(s_{n},x)| > \varepsilon
\end{align*}
for all $x\in S$. In consequence, it follows that $u_{g}(s_{n},S)$ is relatively compact in $D_{\varepsilon}^{\circ}$, which is a contradiction. Therefore, condition~\eqref{Boundary Contradiction} is valid.

Next, since $\lim_{n\uparrow\infty} t_{n} = t_{g}^{-}$, there is $n_{0}\in\mathbb{N}$ such that $t_{n} - t_{g}^{-}\leq \delta$ and hence, $t_{n} - \delta_{n} = t_{g}^{-}$ for all $n\in\mathbb{N}$ with $n\geq n_{0}$. Thus, \eqref{Boundary Contradiction} leads us to
\begin{equation*}
|E_{t_{g}^{-},x}[u_{g}(r,X_{r})] - E_{t_{g}^{-},x}[u_{g}(t,X_{t})]| \leq E_{t_{g}^{-},x}\bigg[\int_{r}^{t}a(s,X_{s})\,\mu(ds)\bigg] < \varepsilon
\end{equation*}
for every $r,t\in (t_{g}^{-},t_{n_{0}}]$ with $r\leq t$ and each $x\in S$. For this reason, the map $(t_{g}^{-},T]\times S\rightarrow D^{\circ}$, $(t,x)\mapsto E_{t_{g}^{-},x}[u_{g}(t,X_{t})]$ is uniformly continuous in $t\in (t_{g}^{-},T]$, uniformly in $x\in S$. Thus, there exists a unique map $\hat{w}\in B(S,D_{\varepsilon})$ such that
\begin{equation*}
\lim_{t\downarrow t_{g}^{-}} E_{t_{g}^{-},x}[u_{g}(t,X_{t})] = \hat{w}(x), \quad \text{uniformly in $x\in S$.}
\end{equation*}
At the same time, it follows from \eqref{Boundary Contradiction} together with dominated convergence that
\begin{equation}\label{Markov Integral Limit}
\lim_{r\downarrow t_{g}^{-}} E_{t_{g}^{-},x}\bigg[\int_{r}^{T} f(s,X_{s},u_{g}(s,X_{s}))\,\mu(ds)\bigg] = E_{t_{g}^{-},x}\bigg[\int_{(t_{g}^{-},T]} f(s,X_{s},u_{g}(s,X_{s}))\,\mu(ds)\bigg]
\end{equation}
for every $x\in S$. Since the map $(t_{g}^{-},T]\times S\rightarrow\mathbb{R}^{k}$, $(r,x)\mapsto E_{t_{g}^{-},x}\big[\int_{r}^{T}f(s,X_{s},u_{g}(s,X_{s}))\,\mu(ds)\big]$ is uniformly continuous in $r\in (t_{g}^{-},T]$, uniformly in $x\in S$, the limit \eqref{Markov Integral Limit} holds in fact uniformly in $x\in S$. Thus, we define $u:[t_{g}^{-},T]\times S\rightarrow D^{\circ}$ by
\[
u(t,x):= u_{g}(t,x),\quad\text{if $t > t_{g}^{-}$},\quad\text{and}\quad u(t,x) := \hat{w}(x),\quad\text{otherwise,}
\]
then it is immediate to see that $u$ is another admissible solution to \eqref{MIE} on $[t_{g}^{-},T]$. Hence, $t_{g}^{-}\in I_{g}$, which contradicts that $I_{g}$ is open in $[0,T]$. This concludes the verification of the second claim.

At last, let $\mathscr{X}$ be (right-hand) Feller, $f(s,\cdot,\cdot)$ be continuous for $\mu$-a.e.~$s\in [0,T]$, and $g$ be continuous. We define $\hat{I}_{g}$ to be the set consisting of $\{T\}$ and of all $t\in [0,T)$ for which \eqref{MIE} admits an admissible (right-)continuous solution on $[t,T]$ and set $\hat{t}_{g}^{-}:=\inf\hat{I}_{g}$. Then Proposition~\ref{Local Existence Proposition} makes sure that $\{T\}\subsetneq\hat{I}_{g}$ and thus, $\hat{t}_{g}^{-} < T$. Using similar arguments as before, it follows that $\hat{I}_{g}$ is an admissible interval that is open in $[0,T]$. 

By Corollary~\ref{Uniqueness Corollary}, the proof is complete, once we have shown that $\hat{t}_{g}^{-} = t_{g}^{-}$. Since $\hat{t}_{g}^{-}\geq t_{g}^{-}$, let us suppose that $\hat{t}_{g}^{-} > t_{g}^{-}$. Then $\hat{I}_{g}\neq [0,T]$ and hence, $\hat{I}_{g} = (\hat{t}_{g}^{-},T]$. As $u_{g}$ must be (right-)continuous on $\hat{I}_{g}\times S$ and
\[
u_{g}(r,x) = E_{r,x}[g(X_{T})] - E_{r,x}\bigg[\int_{r}^{T}f(s,X_{s},u_{g}(s,X_{s}))\,\mu(ds)\bigg]
\]
for all $(r,x)\in [\hat{t}_{g}^{-},T]\times S$, we infer from \eqref{Feller Condition} that $u_{g}$ is in fact right-continuous on $[\hat{t}_{g}^{-},T]\times S$. For this reason, we must face the contradiction that $\hat{t}_{g}^{-}\in \hat{I}_{g}$. This completes the proof.
\end{proof}

\subsection{Proofs of Propositions~\ref{Global Existence and Approximation Proposition} and~\ref{Linear Proposition}}

\begin{proof}[Proof of Proposition~\ref{Global Existence and Approximation Proposition}]
To establish the claim, we invoke Lemma~\ref{Fixed-Point Lemma}. First, since $f$ is affine $\mu$-bounded, Lemma~\ref{Growth Lemma} implies that $u_{g}$ is bounded, and as \eqref{Boundary Condition} cannot hold, we get that $I_{g} = [0,T]$. Hence, $u_{g}$ is the unique global bounded solution to \eqref{MIE}, by Theorem~\ref{Non-Extendibility Theorem}. 

We choose two $\mu$-dominated $a,b\in B([0,T]\times S,\mathbb{R}_{+})$ such that $|f(t,x,w)|\leq a(t,x) + b(t,x)|w|$ for all $(t,x,w)\in [0,T]\times S\times\mathbb{R}^{k}$ and let $\mathscr{H}$ be the set of all $u\in B([0,T]\times S,\mathbb{R}^{k})$ satisfying \eqref{Growth Estimate} for all $(r,x)\in [0,T]\times S$. Then $\mathscr{H}$ is closed in $B_{b}([0,T]\times S,\mathbb{R}^{k})$ and $u_{0},u_{g}\in\mathscr{H}$. We pick two $\mu$-integrable $\overline{a},\overline{b}\in B([0,T],\mathbb{R}_{+})$ with $a(\cdot,y)\leq \overline{a}$ and $b(\cdot,y)\leq \overline{b}$ for all $y\in S$ $\mu$-a.s., and set
\[
c:=e^{\int_{0}^{T}\overline{b}(s)\,\mu(ds)}\bigg(\sup_{y\in S}|g(y)| + \int_{0}^{T}\overline{a}(s)\,\mu(ds)\bigg).
\]
Then each map $u\in\mathscr{H}$ satisfies $|u(r,x)|\leq c$ for each $(t,x)\in [0,T]\times S$. In addition, we introduce the mapping $\Psi:\mathscr{H}\rightarrow B_{b}([0,T]\times S,\mathbb{R}^{k})$ defined via
\[
\Psi(u)(r,x) := u_{0}(r,x) - E_{r,x}\bigg[\int_{r}^{T}f(s,X_{s},u(s,X_{s}))\,\mu(ds)\bigg],
\]
then a map $u\in\mathscr{H}$ is a global solution to \eqref{MIE} if and only if it coincides with $u_{g}$, the unique fixed-point of $\Psi$. From the Markov property of $\mathscr{X}$ and integration by parts we infer that $\Psi$ maps $\mathscr{H}$ into itself. Finally, let $\lambda\in B([0,T]\times S,\mathbb{R}_{+})$ be $\mu$-dominated such that
\[
|f(t,x,w) - f(t,x,w')|\leq \lambda(t,x)|w-w'|
\]
for every $(t,x)\in [0,T]\times S$ and each $w,w'\in\mathbb{R}^{k}$ with $|w|\vee|w'| \leq c$. This guarantees that \eqref{Fixed-Point Condition} is valid for all $u,v\in\mathscr{H}$ and each $(r,x)\in [0,T]\times S$. As this was the last condition we had to check, the claim follows from Lemma~\ref{Fixed-Point Lemma}.
\end{proof}

For the proof of Proposition~\ref{Linear Proposition} we consider an integral sequence of $\mathbb{R}^{k\times k}$-valued maps. To this end, we use the conventions that $[r,t]:=[t,r]$ and $\int_{r}^{t} \overline{b}(s)\,\mu(ds) := -\int_{t}^{r}\overline{b}(s)\,\mu(ds)$ for all $r,t\in [0,T]$ with $t< r$, each $d\in\mathbb{N}$, and every $\mu$-integrable $\overline{b}\in B([0,T],\mathbb{R}^{d\times d})$.

\begin{Lemma}\label{Linear Operator Lemma}
Assume that $b\in B([0,T]\times S,\mathbb{R}^{k\times k})$ is $\mu$-dominated. Let the sequence $(\Sigma^{(n)})_{n\in\mathbb{N}_{0}}$ of $\mathbb{R}^{k\times k}$-valued maps on $[0,T]\times [0,T]\times\Omega$ be recursively given by $\Sigma_{r,t}^{(0)}(\omega):=\mathbbm{I}_{k}$ and
\[
\Sigma_{r,t}^{(n)}(\omega):=\int_{r}^{t} b(s,X_{s}(\omega))\Sigma_{s,t}^{(n-1)}(\omega)\,\mu(ds)\quad\text{for all $n\in\mathbb{N}$.}
\]
Then $\Sigma_{r,t}^{(n)}$ is $\sigma(X_{s}:s\in [r,t])$-measurable, $|\Sigma_{r,t}^{(n)}|\leq \frac{\sqrt{k}}{n!}\big(\big|\int_{r}^{t}|b(s,X_{s})|\,\mu(ds)\big|\big)^{n}$, and $\Sigma^{(n)}(\omega)$ is continuous for all $n\in\mathbb{N}_{0}$, each $r,t\in [0,T]$, and every $\omega\in\Omega$.
\end{Lemma}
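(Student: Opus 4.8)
The plan is to argue by induction on $n$, proving the three assertions simultaneously. The base case $n=0$ is immediate: $\Sigma_{r,t}^{(0)}=\mathbbm{I}_{k}$ is constant, hence $\sigma(X_{s}:s\in[r,t])$-measurable and continuous in $(r,t)$, while $|\mathbbm{I}_{k}|=\sqrt{k}$ realizes the stated bound with equality for $n=0$. So fix $n\ge1$, assume the claim for $n-1$, and fix a $\mu$-integrable $\overline{b}\in B([0,T],\mathbb{R}_{+})$ with $|b(\cdot,y)|\le\overline{b}$ for all $y\in S$ $\mu$-a.s.

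For the measurability I would first observe, using the induction hypothesis, that for each $s$ the matrix $\Sigma_{s,t}^{(n-1)}$ is $\sigma(X_{u}:u\in[s,t])$-measurable, hence $\sigma(X_{u}:u\in[r,t])$-measurable, and that $\Sigma^{(n-1)}(\omega)$ is continuous; by Carath\'{e}odory's criterion the map $(s,\omega)\mapsto\Sigma_{s,t}^{(n-1)}(\omega)$ is then jointly $\mathscr{B}([r,t])\otimes\sigma(X_{u}:u\in[r,t])$-measurable. Since $b$ is Borel on $[0,T]\times S$ and $\mathscr{X}$ is progressive, the evaluation map $(s,\omega)\mapsto X_{s}(\omega)$ restricted to $[r,t]\times\Omega$ is jointly measurable with respect to $\sigma(X_{u}:u\in[r,t])$, so the integrand $(s,\omega)\mapsto b(s,X_{s}(\omega))\Sigma_{s,t}^{(n-1)}(\omega)$ is too; as it is $\mu$-a.e.\ dominated by $\overline{b}(s)\frac{\sqrt{k}}{(n-1)!}\big(\int_{0}^{T}\overline{b}(u)\,\mu(du)\big)^{n-1}$, Fubini's theorem yields that $\Sigma_{r,t}^{(n)}$ is well defined and $\sigma(X_{s}:s\in[r,t])$-measurable.

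The step with any real content is the norm bound. Writing $\beta(s):=|b(s,X_{s}(\omega))|$, the submultiplicativity $|AB|\le|A||B|$ of the Frobenius norm, the inequality $\big|\int_{r}^{t}h(s)\,\mu(ds)\big|\le\big|\int_{r}^{t}|h(s)|\,\mu(ds)\big|$, and the induction hypothesis give
\[
|\Sigma_{r,t}^{(n)}(\omega)|\le\frac{\sqrt{k}}{(n-1)!}\,\bigg|\int_{r}^{t}\beta(s)\bigg(\bigg|\int_{s}^{t}\beta(u)\,\mu(du)\bigg|\bigg)^{n-1}\mu(ds)\bigg|.
\]
Now $B(s):=\int_{s}^{t}\beta(u)\,\mu(du)$ is continuous, because $\mu$ is atomless, and of bounded variation with $dB=-\beta\,\mu(ds)$, so the chain rule $d(B^{n})=nB^{n-1}\,dB$ --- i.e.\ the integration by parts already used for Lemma~\ref{Gronwall's Inequality} --- gives $\int_{r}^{t}\beta(s)B(s)^{n-1}\,\mu(ds)=(B(r)^{n}-B(t)^{n})/n=B(r)^{n}/n$ when $r\le t$, and the corresponding identity under the reversed conventions when $t<r$. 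In both cases the inner integral has absolute value $\frac1n\big(\big|\int_{r}^{t}\beta(s)\,\mu(ds)\big|\big)^{n}$, and substituting gives the claimed bound $|\Sigma_{r,t}^{(n)}(\omega)|\le\frac{\sqrt{k}}{n!}\big(\big|\int_{r}^{t}|b(s,X_{s}(\omega))|\,\mu(ds)\big|\big)^{n}$.

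Finally, for continuity of $(r,t)\mapsto\Sigma_{r,t}^{(n)}(\omega)$ at a point $(r_{0},t_{0})$, I would write $\Sigma_{r,t}^{(n)}-\Sigma_{r_{0},t_{0}}^{(n)}$ as $\int_{r}^{t}b(s,X_{s})\big(\Sigma_{s,t}^{(n-1)}-\Sigma_{s,t_{0}}^{(n-1)}\big)\,\mu(ds)$ plus $\big(\int_{r}^{t}-\int_{r_{0}}^{t_{0}}\big)b(s,X_{s})\Sigma_{s,t_{0}}^{(n-1)}\,\mu(ds)$: the first term vanishes as $(r,t)\to(r_{0},t_{0})$ because, by the induction hypothesis, $\Sigma^{(n-1)}(\omega)$ is uniformly continuous on the compact square $[0,T]^{2}$ and $\beta$ is $\mu$-integrable, and the second vanishes because the $\mu$-integral of an integrable function over an interval shrinking to a point tends to $0$, again by atomlessness of $\mu$. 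This closes the induction. The only delicate bookkeeping is in the norm bound, where one has to apply the chain rule correctly for a general atomless $\mu$ and respect the conventions $[r,t]:=[t,r]$ and $\int_{r}^{t}:=-\int_{t}^{r}$ when $t<r$; the measurability and continuity steps are routine given the progressivity of $\mathscr{X}$ and the atomlessness of $\mu$.
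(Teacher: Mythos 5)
Your proposal is correct and follows essentially the same route as the paper: induction on $n$, submultiplicativity of the Frobenius norm combined with the integration-by-parts identity $\int_{r}^{t}\beta(s)\big(\int_{s}^{t}\beta\,d\mu\big)^{n-1}\mu(ds)=\frac{1}{n}\big(\int_{r}^{t}\beta\,d\mu\big)^{n}$ for the factorial bound, and Fubini for measurability. The only cosmetic difference is the continuity step, which you handle via an explicit two-term decomposition and uniform continuity of $\Sigma^{(n-1)}(\omega)$ on $[0,T]^{2}$, whereas the paper applies dominated convergence to $\mathbbm{1}_{[r_{m},t_{m}]}(s)\Sigma_{s,t_{m}}^{(n)}$ along convergent sequences; both rest on the atomlessness of $\mu$.
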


\begin{proof}
We prove the lemma by induction over $n\in\mathbb{N}_{0}$. In the initial induction step $n=0$ the assignment $\Sigma^{(0)} = \mathbbm{I}_{k}$ gives all results. Let us suppose that the claims are true for some $n\in\mathbb{N}_{0}$ and pick $r,t\in [0,T]$. Then, since $\mathscr{X}$ is progressive, the map $[r,t]\times\Omega\rightarrow\mathbb{R}^{k\times k}$, $(s,\omega)\mapsto b(s,X_{s}(\omega))\Sigma_{s,t}^{(n)}(\omega)$ is $\mathscr{B}([r,t])\otimes\sigma(X_{s}:s\in [r,t])$-measurable, and as the Frobenius norm on $\mathbb{R}^{k\times k}$ is submultiplicative,
\begin{align*}
\bigg|\int_{r}^{t}|b(s,X_{s})\Sigma_{s,t}^{(n)}|\,\mu(ds)\bigg| &\leq \sqrt{k}\bigg|\int_{r}^{t}\frac{|b(s,X_{s})|}{n!}\bigg(\bigg|\int_{s}^{t}|b(s',X_{s'})|\,\mu(ds')\bigg|\bigg)^{n}\,\mu(ds)\bigg|\\
&= \frac{\sqrt{k}}{(n+1)!}\bigg(\bigg|\int_{r}^{t}|b(s,X_{s})|\,\mu(ds)\bigg|\bigg)^{n+1}.
\end{align*}
Thus, $\Sigma_{r,t}^{(n+1)}$ is well-defined and the required estimate holds. In addition, an application of Fubini's theorem to each coordinate ensures that $\Sigma_{r,t}^{(n+1)}$ is $\sigma(X_{s}:s\in [r,t])$-measurable. 

To show that $\Sigma^{(n+1)}(\omega)$ is continuous for all $\omega\in\Omega$, let again $r,t\in [0,T]$ and $(r_{m},t_{m})_{m\in\mathbb{N}}$ be a sequence in $[0,T]\times [0,T]$ that converges to $(r,t)$, then $\lim_{m\uparrow\infty}\mathbbm{1}_{[r_{m},t_{m}]}(s)\Sigma_{s,t_{m}}^{(n)} = \mathbbm{1}_{[r,t]}(s)\Sigma_{s,t}^{(n)}$ for $\mu$-a.e.~$s\in [0,T]$. Therefore, $\lim_{m\uparrow\infty}\Sigma_{r_{m},t_{m}}^{(n+1)}(\omega) $ $= \Sigma_{r,t}^{(n+1)}(\omega)$, by dominated convergence.
\end{proof}

\begin{proof}[Proof of Proposition~\ref{Linear Proposition}]
The map $f$ is affine $\mu$-bounded and Lipschitz $\mu$-continuous. Hence, Proposition~\ref{Global Existence and Approximation Proposition} entails that the sequence $(u_{n})_{n\in\mathbb{N}_{0}}$ in $B_{b}([0,T]\times S,\mathbb{R}^{k})$, recursively given by $u_{0}(r,x) := E_{r,x}[g(X_{T})]$ and
\[
u_{n}(r,x):= u_{0}(r,x) - E_{r,x}\bigg[\int_{r}^{T}a(s,X_{s}) + b(s,X_{s})u_{n-1}(s,X_{s})\,\mu(ds)\bigg]
\]
for all $n\in\mathbb{N}$, converges uniformly to $u_{g}$, the unique global bounded solution to \eqref{MIE}. With the notation of Lemma~\ref{Linear Operator Lemma}, an induction proof shows that $u_{n}$ is of the form
\[
u_{n}(r,x) = E_{r,x}\bigg[\sum_{i=0}^{n}(-1)^{i}\Sigma_{r,T}^{(i)}g(X_{T})\bigg] - E_{r,x}\bigg[\int_{r}^{T}\sum_{i=0}^{n-1}(-1)^{i}\Sigma_{r,t}^{(i)}a(t,X_{t})\,\mu(dt)\bigg]
\]
for all $n\in\mathbb{N}$ and each $(r,x)\in [0,T]\times S$. Because $\sum_{n=0}^{\infty}|(-1)^{n}\Sigma_{r,t}^{(n)}|$ $\leq \sqrt{k} e^{|\int_{r}^{t}|b(s,X_{s})|\,\mu(ds)|}$ for every $r,t\in [0,T]$, the series mapping $\sum_{n=0}^{\infty}(-1)^{n}\Sigma^{(n)}$ converges absolutely, uniformly in $(r,t,\omega)\in [0,T]\times [0,T]\times\Omega$. Lemma~\ref{Linear Operator Lemma} together with the previous estimate imply that the limit map $\Sigma:=\sum_{n=0}^{\infty}(-1)^{n}\Sigma^{(n)}$ fulfills (i). Hence, dominated convergence yields the representation formula \eqref{Linear Representation Formula}.

Let us verify that (ii) holds as well. From $\Sigma_{r,r}^{(0)} = \mathbbm{I}_{k}$ and $\Sigma_{r,r}^{(n)} = 0$ for all $n\in\mathbb{N}$ we get that $\Sigma_{r,r}=\mathbbm{I}_{k}$ for each $r\in [0,T]$. By the Cauchy product for absolutely convergent matrix series, to verify that $\Sigma_{r,s}\Sigma_{s,t} = \Sigma_{r,t}$ for every $r,s,t\in [0,T]$, it is enough to show that
\[
\sum_{i=0}^{n}\Sigma_{r,s}^{(i)}\Sigma_{s,t}^{(n-i)} = \Sigma_{r,t}^{(n)}
\]
for all $n\in\mathbb{N}_{0}$, which follows inductively. Furthermore, from $\Sigma_{r,t}\Sigma_{t,r} = \Sigma_{r,r} = \mathbbm{I}_{k}$ we conclude that $\Sigma_{r,t}(\omega)$ is invertible and $\Sigma_{r,t}(\omega)^{-1} = \Sigma_{t,r}(\omega)$ for all $r,t\in [0,T]$ and each $\omega\in\Omega$. 

Regarding (iii), let $b$ fulfill $b(r,x)b(s,y) = b(s,y)b(r,x)$ for every $(r,x),(s,y)\in [0,T]\times S$. Then the proposition follows as soon as we have proven that
\begin{equation}\label{Matrix Sequence Representation Formula}
\Sigma_{r,t}^{(n)} = \frac{1}{n!}\bigg(\int_{r}^{t}b(s,X_{s})\,\mu(ds)\bigg)^{n}
\end{equation}
for every $n\in\mathbb{N}$ and each $r,t\in [0,T]$ with $r\leq t$. Hence, we write $S_{n}$ for the set of all permutations of $\{1,\dots,n\}$ and set $C_{n}^{\sigma}(r,t):=\{(s_{1},\dots,s_{n})\in [r,t]^{n}\,|\, s_{\sigma(1)}\leq \cdots \leq s_{\sigma(n)}\}$ for each $\sigma\in S_{n}$. From the measure transformation formula we obtain that
\begin{align*}
\int_{C_{n}^{\sigma}(r,t)} b(s_{1},X_{s_{1}})&\cdots b(s_{n},X_{s_{n}})\,d\mu^{n}(s_{1},\dots,s_{n})\\
&= \int_{C_{n}(r,t)} b(s_{1},X_{s_{1}})\cdots b(s_{n},X_{s_{n}})\,d\mu^{n}(s_{1},\dots,s_{n}) = \Sigma_{r,t}^{(n)},
\end{align*}
where $C_{n}(r,t):=\{(s_{1},\dots,s_{n})\in [r,t]^{n}\,|\,s_{1}\leq \cdots\leq s_{n}\}$. In the end, we utilize that $[r,t]^{n}$ $= \bigcup_{\sigma\in S_{n}} C_{n}^{\sigma}(r,t)$. Then the hypothesis that $\mu$ is atomless and Fubini's theorem lead to
\[
\bigg(\int_{r}^{t}b(s,X_{s})\,\mu(ds)\bigg)^{n} = \sum_{\sigma\in S_{n}} \int_{C_{n}^{\sigma}(r,t)} b(s_{1},X_{s_{1}})\cdots b(s_{n},X_{s_{n}})\,d\mu^{n}(s_{1},\dots,s_{n}) = n!\Sigma_{r,t}^{(n)}.
\]
That is, \eqref{Matrix Sequence Representation Formula} is justified and the claim follows.
\end{proof}

\subsection{Proof of Theorem~\ref{One-Dimensional Global Existence Theorem}}

We restrict our attention to $k=1$. First, we use the Feynman-Kac formula \eqref{Linear Representation Formula} to represent the difference of two solutions. This idea is essentially based on Proposition 3.1 in Schied~\cite{Schied}.

\begin{Lemma}\label{Comparison Lemma 2}
Let $f,\tilde{f}\in BC_{\mu}^{1-}([0,T]\times S\times D,\mathbb{R})$, $\tilde{g}\in B_{b}(S,D)$ be consistent, $I$ be an admissible interval, $u$ be a solution to \eqref{MIE} on $I$, and $\tilde{u}$ be a solution to \eqref{MIE} on $I$ with $\tilde{f}$ and $\tilde{g}$ instead of $f$ and $g$, respectively. Assume that $u$, $\tilde{u}$ are $\mu$-admissible and define $a,b\in B(I\times S,\mathbb{R})$ by
\[
a(r,x):= (f - \tilde{f})(r,x,\tilde{u}(r,x)),\quad\text{and}\quad b(r,x):= \frac{f(r,x,u(r,x)) - f(r,x,\tilde{u}(r,x))}{(u - \tilde{u})(r,x)},
\]
if $u(r,x)\neq \tilde{u}(r,x)$, and $b(r,x):= 0$, otherwise. Then $a,b$ are locally $\mu$-dominated and
\[
(u-\tilde{u})(r,x) = E_{r,x}\bigg[e^{-\int_{r}^{T}b(s,X_{s})\,\mu(ds)}(g-\tilde{g})(X_{T})\bigg]-E_{r,x}\bigg[\int_{r}^{T}e^{-\int_{r}^{t}b(s,X_{s})\,\mu(ds)}a(t,X_{t})\,\mu(dt)\bigg]
\]
for each $(r,x)\in I\times S$. In particular, if $f\leq\tilde{f}$ and $g\geq\tilde{g}$, then $u \geq \tilde{u}$.
\end{Lemma}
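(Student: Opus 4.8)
The plan is to show that $v:=u-\tilde u\in B(I\times S,\mathbb{R})$, which satisfies $v(T,\cdot)=g-\tilde g$, is a solution on every interval $[\rho,T]$ with $\rho\in I$ of the \emph{linear} version of \eqref{MIE} obtained by replacing $D$, $f$, and $g$ by $\mathbb{R}$, $f_{0}(t,x,w):=a(t,x)+b(t,x)w$, and $g-\tilde g$, and then to read off the claim from the Feynman--Kac formula \eqref{Linear Representation Formula} of Proposition~\ref{Linear Proposition} together with the uniqueness statement of Corollary~\ref{Uniqueness Corollary}. The step I expect to be the main obstacle is turning the pointwise-in-$\hat{w}$ regularity hypotheses on $f$ and $\tilde{f}$ (local $\mu$-boundedness and local Lipschitz $\mu$-continuity) into genuine $\mu$-integrable bounds for $a$ and $b$; the rest is bookkeeping plus direct appeals to those two results.

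To establish the regularity of $a$ and $b$, I would first note that both are Borel measurable, since $\{u\neq\tilde{u}\}\in\mathscr{B}(I\times S)$ and $f(\cdot,\cdot,u(\cdot,\cdot))$, $f(\cdot,\cdot,\tilde{u}(\cdot,\cdot))$, $\tilde{f}(\cdot,\cdot,\tilde{u}(\cdot,\cdot))$ are compositions of measurable maps. For the dominance, fix $r\in I$. Since $u$ and $\tilde{u}$ are $\mu$-admissible, there are a compact set $K$ in $D$ and a $\mu$-null set $N\in\mathscr{B}([0,T])$ with $u(s,y),\tilde{u}(s,y)\in K$ for all $(s,y)\in(N^{c}\cap[r,T])\times S$. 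By the compactness argument used in the proof of Corollary~\ref{Uniqueness Corollary} (an appeal to Proposition~6.4 in Amann~\cite{Amann}, combined with local $\mu$-boundedness of $f$ and $\tilde{f}$), there is a neighborhood $W$ of $K$ in $D$ on which $f$ and $\tilde{f}$ are $\mu$-bounded and $f$ is Lipschitz $\mu$-continuous; this yields $\mu$-integrable $\overline{a}_{r},\overline{\lambda}_{r}\in B([r,T],\mathbb{R}_{+})$ with $|a(s,y)|\leq\overline{a}_{r}(s)$ and $|b(s,y)|\leq\overline{\lambda}_{r}(s)$ for all $y\in S$ and $\mu$-a.e.~$s\in[r,T]$. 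Hence $a$ and $b$ are $\mu$-dominated on $[r,T]\times S$ for every $r\in I$, and pasting the bounds $\overline{a}_{r_{n}},\overline{\lambda}_{r_{n}}$ for a sequence $r_{n}\downarrow\inf I$ over the pieces $(r_{n+1},r_{n}]$ produces a single locally $\mu$-integrable dominating pair on $I$, so $a$ and $b$ are locally $\mu$-dominated.

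It remains to obtain the linear equation and conclude. By the reformulation of ``solution'' via the Markov property of $\mathscr{X}$ stated right after \eqref{MIE}, $u(r,x)=E_{r,x}[g(X_{T})]-E_{r,x}\big[\int_{r}^{T}f(s,X_{s},u(s,X_{s}))\,\mu(ds)\big]$ for all $(r,x)\in I\times S$, together with the analogous identity for $\tilde{u}$ with $\tilde{f},\tilde{g}$; subtracting and using the identity $f(s,\cdot,u(s,\cdot))-\tilde{f}(s,\cdot,\tilde{u}(s,\cdot))=a(s,\cdot)+b(s,\cdot)v(s,\cdot)$ — which holds both on $\{u\neq\tilde{u}\}$ and on its complement by the definitions of $a$ and $b$ — leads to
\[
v(r,x)=E_{r,x}[(g-\tilde{g})(X_{T})]-E_{r,x}\bigg[\int_{r}^{T}a(s,X_{s})+b(s,X_{s})v(s,X_{s})\,\mu(ds)\bigg],
\]
where the integrand is $P_{r,x}$-integrable since it is dominated by $|f(s,X_{s},u(s,X_{s}))|+|\tilde{f}(s,X_{s},\tilde{u}(s,X_{s}))|$. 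Thus, for each $\rho\in I$, the restriction $v|([\rho,T]\times S)$ is a $\mu$-admissible solution of the linear version of \eqref{MIE} on $[\rho,T]$ (with terminal value $g-\tilde{g}$, which is bounded and consistent). Extending $a$ and $b$ by $0$ to $[0,T]$ leaves them unchanged on $[\rho,T]$ and makes them $\mu$-dominated on $[0,T]$, so $f_{0}\in BC_{\mu}^{1-}([0,T]\times S\times\mathbb{R},\mathbb{R})$ is affine $\mu$-bounded and Lipschitz $\mu$-continuous by Examples~\ref{Borel Regularity Examples}, and Proposition~\ref{Linear Proposition}, applied with $g-\tilde{g}$ in place of $g$, provides a global bounded solution $w_{g}$ of the linear problem; since $k=1$ the commutation condition in part~(iii) of that proposition is trivial, so $\Sigma_{r,t}=e^{-\int_{r}^{t}b(s,X_{s})\,\mu(ds)}$ and \eqref{Linear Representation Formula} becomes precisely the displayed identity in the lemma, with $w_{g}$ in place of $v$. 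Because $v|([\rho,T]\times S)$ and $w_{g}|([\rho,T]\times S)$ are both $\mu$-admissible solutions of the linear problem on $[\rho,T]$, Corollary~\ref{Uniqueness Corollary} forces them to coincide; evaluating at $r=\rho$ and letting $\rho$ range over $I$ gives the representation for all $(r,x)\in I\times S$. Finally, if $f\leq\tilde{f}$ then $a\leq 0$ and if $g\geq\tilde{g}$ then $g-\tilde{g}\geq 0$, while each factor $e^{-\int b\,\mu(ds)}$ is strictly positive, so the representation forces $v\geq 0$, that is, $u\geq\tilde{u}$.
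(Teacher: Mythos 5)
Your proposal is correct and follows essentially the same route as the paper: establish local $\mu$-dominance of $a$ and $b$ from the $\mu$-admissibility of $u,\tilde{u}$ together with the local regularity of $f$ and $\tilde{f}$, observe that $u-\tilde{u}$ solves the linear problem with coefficients $a_r,b_r$ extended by zero, and then invoke Proposition~\ref{Linear Proposition} (with the scalar case of part (iii)) and Corollary~\ref{Uniqueness Corollary}. The only difference is that you spell out the compactness/neighborhood argument and the pasting of the dominating functions, which the paper leaves implicit.
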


\begin{proof}
The second claim is a direct consequence of the first, since $a\leq 0$ whenever $f\leq\tilde{f}$. Thus, we merely have to prove the first assertion. To check that $a$ and $b$ are locally $\mu$-dominated, it suffices to show that for each $r\in I$ there is a $\mu$-integrable $\overline{c}\in B([r,T],\mathbb{R}_{+})$ such that
\[
|a(\cdot,y)|\vee|b(\cdot,y)| \leq \overline{c}\quad \text{for each $y\in S$}\quad \text{$\mu$-a.s.~on $[r,T]$.}
\]
This condition follows readily from the local Lipschitz $\mu$-continuity of $f$, the local $\mu$-boundedness of $\tilde{f}$, and the hypothesis that $u$, $\tilde{u}$ are $\mu$-admissible. By definition, $a(t,x) + b(t,x)(u-\tilde{u})(t,x)$ $= f(t,x,u(t,x)) - \tilde{f}(t,x,\tilde{u}(t,x))$ for each $(t,x)\in I\times S$. Hence, we let $r\in I$ and choose $a_{r},b_{r}\in B([0,T]\times S,\mathbb{R})$ so that $a_{r}(t,x)=a(t,x)$ and $b_{r}(t,x)= b(t,x)$, if $t\geq r$, and $a_{r}(t,x)$ $=b_{r}(t,x)=0$, otherwise. Then $f_{r}:[0,T]\times S\times\mathbb{R}\rightarrow\mathbb{R}$ given by
\[
f_{r}(t,x,w):=a_{r}(t,x) + b_{r}(t,x)w
\]
is affine $\mu$-bounded and Lipschitz $\mu$-continuous. In addition, the restriction of $u-\tilde{u}$ to $[r,T]\times S$ is a $\mu$-admissible solution to \eqref{MIE} with $f$ and $g$ replaced by $f_{r}$ and $g-\tilde{g}$, respectively. Thus, from Proposition~\ref{Linear Proposition} and Corollary~\ref{Uniqueness Corollary} we infer the assertion.
\end{proof}

We suppose in the sequel that $D$ is an interval, and set $\underline{d}:=\inf D$ and $\overline{d}:=\sup D$.

\begin{Lemma}\label{One-Dimensional Growth Lemma}
Let $\underline{d} > -\infty$ and $f$ be affine $\mu$-bounded from below, i.e., there are two $\mu$-dominated $a,b\in B([0,T]\times S,\mathbb{R}_{+})$ with $f(t,x,w)\geq - a(t,x) - b(t,x)|w|$ for all $(t,x,w)\in [0,T]\times S\times D$. Then every $\mu$-suitably bounded solution $u$ to \eqref{MIE} on an admissible interval $I$ fulfills
\[
u(r,x) - \underline{d} \leq E_{r,x}\bigg[e^{\int_{r}^{T}b(s,X_{s})\,\mu(ds)}\bigg(g(X_{T}) - \underline{d} +\int_{r}^{T}(a + b|\underline{d}|)(s,X_{s})\,\mu(ds)\bigg)\bigg]
\]
for all $(r,x)\in I\times S$.
\end{Lemma}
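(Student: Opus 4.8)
The plan is to translate the problem by $\underline{d}$ so that it reduces to a non-negativity statement to which the Markovian Gronwall inequality of Lemma~\ref{Gronwall's Inequality} applies, exactly as in the proof of Lemma~\ref{Growth Lemma}.

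First I would recall that, being a solution to \eqref{MIE} on $I$, the map $u$ satisfies $u(r,x) = E_{r,x}[g(X_{T})] - E_{r,x}\big[\int_{r}^{T}f(s,X_{s},u(s,X_{s}))\,\mu(ds)\big]$ for all $(r,x)\in I\times S$. Using the lower bound $f(s,X_{s},u(s,X_{s})) \geq -a(s,X_{s}) - b(s,X_{s})|u(s,X_{s})|$, which holds pointwise by the affine $\mu$-boundedness from below of $f$, I obtain
\[
u(r,x) \leq E_{r,x}[g(X_{T})] + E_{r,x}\bigg[\int_{r}^{T}a(s,X_{s}) + b(s,X_{s})|u(s,X_{s})|\,\mu(ds)\bigg].
\]
Since $u$ takes its values in $D$ and $\underline{d}=\inf D$, we have $u(s,X_{s})\geq\underline{d}$, so the triangle inequality gives $|u(s,X_{s})| \leq (u(s,X_{s})-\underline{d}) + |\underline{d}|$. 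Subtracting $\underline{d}$ from both sides above (which may be moved inside $E_{r,x}$, as $P_{r,x}$ is a probability measure) and inserting this bound yields
\[
u(r,x)-\underline{d} \leq E_{r,x}[g(X_{T})-\underline{d}] + E_{r,x}\bigg[\int_{r}^{T}(a + b|\underline{d}|)(s,X_{s}) + b(s,X_{s})(u(s,X_{s})-\underline{d})\,\mu(ds)\bigg].
\]

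Next I would set $v:=u-\underline{d}$, $h:=g-\underline{d}$, and $\tilde{a}:=a+b|\underline{d}|$, and check the hypotheses of Lemma~\ref{Gronwall's Inequality}: $v\in B(I\times S,\mathbb{R}_{+})$ because $u$ is $D$-valued, and $v$ is $\mu$-suitably bounded since $u$ is and $\underline{d}$ is a finite constant; $h\in B(S,\mathbb{R}_{+})$ with $E_{r,x}[|h(X_{T})|]\leq E_{r,x}[|g(X_{T})|]+|\underline{d}|<\infty$; and $\tilde{a},b\in B(I\times S,\mathbb{R}_{+})$ are locally $\mu$-dominated because $a$ and $b$ are $\mu$-dominated. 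The last display is precisely the inequality required of $v$ in Lemma~\ref{Gronwall's Inequality}, so that lemma yields
\[
v(r,x) \leq E_{r,x}\bigg[e^{\int_{r}^{T}b(s,X_{s})\,\mu(ds)}\bigg(h(X_{T}) + \int_{r}^{T}\tilde{a}(s,X_{s})\,\mu(ds)\bigg)\bigg],
\]
and re-substituting $v=u-\underline{d}$, $h=g-\underline{d}$, and $\tilde{a}=a+b|\underline{d}|$ gives the assertion.

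I do not expect a genuine obstacle: the computation is a direct adaptation of the proof of Lemma~\ref{Growth Lemma}, and the only point requiring some care is the bookkeeping of integrability, namely that $\int_{r}^{T}b(s,X_{s})|u(s,X_{s})|\,\mu(ds)$ is finite and $P_{r,x}$-integrable — which follows from $\mu$-suitable boundedness of $u$ together with $b$ being $\mu$-dominated — so that all quantities above are well defined and Lemma~\ref{Gronwall's Inequality} is applicable.
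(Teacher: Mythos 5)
Your proposal is correct and follows essentially the same route as the paper: both reduce to the inequality $u-\underline{d}\leq E_{r,x}[g(X_{T})-\underline{d}]+E_{r,x}\big[\int_{r}^{T}(a+b|\underline{d}|)(s,X_{s})+b(s,X_{s})(u(s,X_{s})-\underline{d})\,\mu(ds)\big]$ via the bound $|u|\leq(u-\underline{d})+|\underline{d}|$ and then invoke Lemma~\ref{Gronwall's Inequality}. Your write-up is in fact more careful than the paper's (which contains two apparent typos, $\beta$ for $b$ and $|\overline{d}|$ for $|\underline{d}|$), and the extra verification of the hypotheses of the Gronwall lemma is welcome but not a different argument.
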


\begin{proof}
It holds that
\begin{align*}
u(r,x) - \underline{d} &\leq E_{r,x}[g(X_{T}) - \underline{d}] + E_{r,x}\bigg[\int_{r}^{T}(a+ b|\underline{d}|)(s,X_{s})\,\mu(ds)\bigg]\\
&\quad + E_{r,x}\bigg[\int_{r}^{T}\beta(s,X_{s})(u(s,X_{s}) - \underline{d})\,\mu(ds)\bigg]
\end{align*}
for each $(r,x)\in I\times S$, because $|u(s,X^{s})| \leq (u(s,X^{s}) - \underline{d}) + |\overline{d}|$ for all $s\in [r,T]$. By Lemma~\ref{Gronwall's Inequality}, the asserted estimate follows.
\end{proof}

\begin{Remark}
Suppose instead that $\overline{d} < \infty$ and $f$ is affine $\mu$-bounded from above. To obtain a similar estimate in this case, we replace $D$ by $-D=\{-w\,|\,w\in D\}$ and $f$ by the function $[0,T]\times S\times (-D)\rightarrow\mathbb{R}$, $(t,x,w)\mapsto -f(t,x,-w)$, respectively, and apply the above lemma.
\end{Remark}

Next, we study the boundary behavior of solutions. To this end, we consider only the case $\underline{d} > -\infty$, as the case $\overline{d} < \infty$ can be treated similarly, by considering above remark.

\begin{Proposition}\label{Boundary Proposition}
Let $\underline{d} > -\infty$ and $f\in BC_{\mu}^{1-}([0,T]\times S\times D,\mathbb{R})$. Suppose that $f$ is both locally $\mu$-bounded and locally Lipschitz $\mu$-continuous at $\underline{d}$ with $\lim_{w\downarrow\underline{d}}f(\cdot,x,w)\leq 0$ for all $x\in S$ $\mu$-a.s., and let one of the following two conditions hold:
\begin{enumerate}[(i)]
\item $f$ is $\mu$-bounded from above.
\item $\overline{d} = \infty$ and $f$ is affine $\mu$-bounded from below.
\end{enumerate}
Then there is $c\in (0,1]$ such that each $\mu$-admissible solution $u$ to \eqref{MIE} on an admissible interval $I$ is subject to $u(r,x) - \underline{d} \geq c( E_{r,x}[g(X_{T})] - \underline{d})$ for all $(r,x)\in I\times S$.
\end{Proposition}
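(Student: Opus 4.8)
The plan is to convert the local growth hypotheses on $f$ at the lower endpoint $\underline d$ into a single \emph{solution-independent} one-sided linear bound, and then to compare any $\mu$-admissible solution with an explicit linear (Feynman--Kac) solution. First, local Lipschitz $\mu$-continuity at $\underline d$ provides a $\mu$-dominated $\lambda\in B([0,T]\times S,\mathbb{R}_{+})$ and a $\delta_{0}>0$ such that, with $W:=\{w\in D:w-\underline d<\delta_{0}\}$, one has $f(t,x,w)-\lim_{w'\downarrow\underline d}f(t,x,w')\le\lambda(t,x)(w-\underline d)$ on $[0,T]\times S\times W$; combined with $\lim_{w\downarrow\underline d}f(\cdot,x,w)\le 0$ $\mu$-a.s.\ this yields $f(t,x,w)\le\lambda(t,x)(w-\underline d)$ for all $x$, $\mu$-a.e.\ $t$, and $w\in W$. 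I would then upgrade this to a bound $f(t,x,w)\le\Lambda(t,x)(w-\underline d)$ with a $\mu$-dominated $\Lambda$ independent of the particular solution: under hypothesis (i), $f\le a$ for some $\mu$-dominated $a\ge 0$, so $f(t,x,w)\le\delta_{0}^{-1}a(t,x)(w-\underline d)$ once $w-\underline d\ge\delta_{0}$, and $\Lambda:=\lambda+\delta_{0}^{-1}a$ works on all of $D$; under hypothesis (ii), Lemma~\ref{One-Dimensional Growth Lemma} (valid since $\underline d>-\infty$ and $f$ is affine $\mu$-bounded from below) first furnishes a finite $M^{\ast}$ with $u(r,x)-\underline d\le M^{\ast}$ for \emph{every} $\mu$-admissible solution $u$ on \emph{every} admissible interval, whereupon local $\mu$-boundedness makes $f$ $\mu$-bounded on the slab $[0,T]\times S\times[\underline d,\underline d+M^{\ast}]$ and the same estimate produces a $\mu$-dominated $\Lambda$ with $f(t,x,w)\le\Lambda(t,x)(w-\underline d)$ for $\underline d\le w\le\underline d+M^{\ast}$.

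Second, set $\tilde f(t,x,w):=\Lambda(t,x)(w-\underline d)$ (extending $f$ to $w=\underline d$ by its right limit, and working on the closure of $D$, if $\underline d\notin D$), so that $f,\tilde f\in BC_{\mu}^{1-}$. By Proposition~\ref{Linear Proposition} (applied with $D=\mathbb{R}$) together with integration by parts, the problem \eqref{MIE} with $f,g$ replaced by $\tilde f,g$ possesses the global solution
\[
\tilde u(r,x)=\underline d+E_{r,x}\Big[e^{-\int_{r}^{T}\Lambda(s,X_{s})\,\mu(ds)}\big(g(X_{T})-\underline d\big)\Big],
\]
which takes its values in the compact interval $\big[\underline d,\ \underline d+\sup_{y\in S}(g(y)-\underline d)\big]$, hence is $\mu$-admissible; here $\sup_{y\in S}(g(y)-\underline d)\le M^{\ast}$ because $E_{T,y}[(g-\underline d)(X_{T})]=g(y)-\underline d$, so $\tilde u$ takes values in the region where $f\le\tilde f$. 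Consequently $(f-\tilde f)(\cdot,\cdot,\tilde u(\cdot,\cdot))\le 0$ $\mu$-a.e., and the representation in Lemma~\ref{Comparison Lemma 2} (with $\tilde g:=g$, so that $g-\tilde g=0$) gives $u\ge\tilde u$ for every $\mu$-admissible solution $u$ to \eqref{MIE} on every admissible interval $I$.

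Third, fixing a $\mu$-integrable $\overline\Lambda\ge 0$ on $[0,T]$ that dominates $\Lambda$, we have $\int_{r}^{T}\Lambda(s,X_{s})\,\mu(ds)\le\int_{0}^{T}\overline\Lambda(s)\,\mu(ds)=:M_{\Lambda}$, and since $g-\underline d\ge 0$ it follows that
\[
u(r,x)-\underline d\ \ge\ \tilde u(r,x)-\underline d\ =\ E_{r,x}\Big[e^{-\int_{r}^{T}\Lambda\,\mu(ds)}\big(g(X_{T})-\underline d\big)\Big]\ \ge\ e^{-M_{\Lambda}}\big(E_{r,x}[g(X_{T})]-\underline d\big),
\]
so $c:=e^{-M_{\Lambda}}\in(0,1]$ does the job, and it depends only on $f$ (and, under (ii), on the a priori bound $M^{\ast}$), not on $u$ or $I$.

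The step I expect to be the main obstacle is the first one: upgrading the purely \emph{local} control of $f$ at $\underline d$ to a one-sided linear bound with dominating function $\Lambda$ \emph{independent of the solution}. This is exactly where the global hypotheses (i) and (ii) are needed, and under (ii) it forces one to first extract the uniform a priori bound of Lemma~\ref{One-Dimensional Growth Lemma}. The remaining ingredients --- passing to the closure of $D$, checking $\tilde f\in BC_{\mu}^{1-}$, verifying that $\tilde u$ is $\mu$-admissible, and (if $\overline d<\infty$ and $g$ is not bounded away from $\overline d$) a routine truncation/approximation of $g$ combined with the stability result of Proposition~\ref{Limit Proposition} --- are standard.
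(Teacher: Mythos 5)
Your argument is, at bottom, the paper's own: both rest on the linear Feynman--Kac comparison of Lemma~\ref{Comparison Lemma 2}, under (i) both obtain the exponential rate by splitting $w$ into the range $w-\underline d<\delta_{0}$ (where the local Lipschitz constant at $\underline d$ together with $\lim_{w\downarrow\underline d}f\leq 0$ applies) and $w-\underline d\geq\delta_{0}$ (where one divides the one-sided bound on $f$ by $\delta_{0}$), and under (ii) both first invoke Lemma~\ref{One-Dimensional Growth Lemma} to confine all $\mu$-admissible solutions to a compact slab. The packaging differs: the paper compares $u$ with the constant solution $\underline d$ (that is, $\tilde f=0$, $\tilde g=\underline d$) and bounds the resulting difference quotient $a_{u}$ by a solution-independent $\mu$-dominated $n$, whereas you first majorize $f$ by $\tilde f=\Lambda\cdot(w-\underline d)$ and compare $u$ with the explicit linear solution $\tilde u$ furnished by Proposition~\ref{Linear Proposition}.

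This repackaging has one genuine weak point, under hypothesis (i) with $\overline d<\infty$. Since $\tilde u(T,\cdot)=g$, if $\overline d\notin D$ and $\sup_{y}g(y)=\overline d$ then the closure of the range of $\tilde u$ meets $\overline d$, where $f$ carries no assumed regularity; consequently $\tilde u$ need not be $\mu$-admissible and the coefficient $b(r,x)=(f(r,x,u(r,x))-f(r,x,\tilde u(r,x)))/(u-\tilde u)(r,x)$ of Lemma~\ref{Comparison Lemma 2} need not be locally $\mu$-dominated, so the lemma cannot be invoked for the pair $(u,\tilde u)$. (The example $f(t,x,w)=-1/(\overline d-w)$ on $D=(\underline d,\overline d)$ satisfies (i) and shows that $f(\cdot,\cdot,\tilde u)$ can genuinely blow up.) The paper's choice $\tilde u\equiv\underline d$ avoids this entirely: for $w-\underline d\geq\delta_{0}$ the quotient $(\overline f(\cdot,\cdot,w)-\overline f(\cdot,\cdot,\underline d))/(w-\underline d)$ is controlled by $(a-\overline f(\cdot,\cdot,\underline d))/\delta_{0}$ using only the upper bound $f\leq a$, with no Lipschitz information away from $\underline d$ required. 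Your parenthetical fix does work, but not via Proposition~\ref{Limit Proposition}: keep $u$ fixed, run your comparison against the linear solution $\tilde u_{m}$ built from $g_{m}:=g\wedge(\overline d-2^{-m})\leq g$, whose range stays in a compact subset of $D\cup\{\underline d\}$, and let $m\uparrow\infty$ in the resulting inequality $u-\underline d\geq e^{-M_{\Lambda}}(E_{r,x}[g_{m}(X_{T})]-\underline d)$. The remaining steps --- the construction of $\Lambda$, the identification of $\tilde u$ via Proposition~\ref{Linear Proposition} and integration by parts, the check that $\sup_{y}(g(y)-\underline d)\leq M^{\ast}$ so that $f\leq\tilde f$ along $\tilde u$ under (ii), and the constant $c=e^{-M_{\Lambda}}$ --- are correct and uniform in $u$ and $I$, as required.
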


\begin{proof}
Whenever $\underline{d}\notin D$, then we define the extension $\overline{f}$ of $f$ to $[0,T]\times S\times (D\cup\{\underline{d}\})$ through $\overline{f}(t,x,\underline{d}):=\lim_{w\downarrow\underline{d}} f(t,x,w)$ for all $(t,x)\in [0,T]\times S$. Otherwise, we simply set $\overline{f}:=f$, which gives $\overline{f}\in BC_{\mu}^{1-}([0,T]\times S\times (D\cup\{\underline{d}\}))$ in either case. Now, let $u$ be a $\mu$-admissible solution to \eqref{MIE} on an admissible interval $I$, then Lemma~\ref{Comparison Lemma 2} implies that $a_{u}\in B(I\times S,\mathbb{R})$ defined via $a_{u}(r,x):= (\overline{f}(r,x,u(r,x)) -\overline{f}(r,x,\underline{d}))/(u(r,x) - \underline{d})$, if $u(r,x) > \underline{d}$, and $a_{u}(r,x):=0$, otherwise, is locally $\mu$-dominated and satisfies
\[
u(r,x) - \underline{d} \geq E_{r,x}\bigg[e^{-\int_{r}^{T}a_{u}(s,X_{s})\,\mu(ds)}(g(X_{T}) - \underline{d})\bigg]
\]
for each $(r,x)\in I\times S$, since $\overline{f}(t,X_{t},\underline{d})\leq 0$ for $\mu$-a.e.~$t\in [r,T]$. We derive some $\mu$-dominated $n\in B([0,T]\times S,\mathbb{R}_{+})$ such that every $\mu$-admissible solution $u$ to \eqref{MIE} on an admissible interval $I$ satisfies $a_{u}(r,x) \leq n(r,x)$ for each $(r,x)\in I\times S$. Once this is shown, the claim follows.

So, let us at first assume that (i) holds. Then there is a $\mu$-dominated $a\in B([0,T]\times S,\mathbb{R}_{+})$ with $\overline{f}(t,x,w) \leq a(t,x)$ for each $(t,x,w)\in [0,T]\times S\times D$. As $f$ is locally Lipschitz $\mu$-continuous at $\underline{d}$, there are $\delta >0$ and a $\mu$-dominated $\lambda\in B([0,T]\times S,\mathbb{R}_{+})$ fulfilling $|\overline{f}(t,x,w) -\overline{f}(t,x,w')|$ $\leq \lambda(t,x)|w-w'|$ for every $(t,x)\in [0,T]\times S$ and all $w,w'\in [\underline{d},\underline{d} +\delta)\cap D$. Hence,
\[
a_{u}(r,x) \leq \lambda(r,x)\mathbbm{1}_{[\underline{d},\underline{d}+\delta)}(u(r,x)) + \frac{a(r,x) - \overline{f}(r,x,\underline{d})}{\delta}\mathbbm{1}_{[\underline{d}+\delta,\infty)}(u(r,x)) \leq n(r,x)
\]
for every $\mu$-admissible solution $u$ to \eqref{MIE} on an admissible interval $I$ and each $(r,x)\in I\times S$, where we have set $n :=\max\{\lambda,(a - \overline{f}(\cdot,\cdot,\underline{d}))/\delta\}$. Since $f$ locally $\mu$-bounded at $\underline{d}$, we see easily that $n$ is $\mu$-dominated, as desired.

In place of assuming that $f$ is $\mu$-bounded from above, let (ii) be true. Then Lemma~\ref{One-Dimensional Growth Lemma} yields $c >\underline{d}$ such that $u(I\times S)\subset [\underline{d},c]\cap D$ for each $\mu$-admissible solution $u$ to \eqref{MIE} on an admissible interval $I$. Because $[\underline{d},c]$ is compact, there is a $\mu$-dominated $\lambda\in B([0,T]\times S,\mathbb{R}_{+})$ such that $|\overline{f}(t,x,w) - \overline{f}(t,x,w')|$ $\leq\lambda(t,x)|w-w'|$ for all $(t,x)\in [0,T]\times S$ and each $w,w'\in [\underline{d},c]$. Hence, each $\mu$-admissible solution $u$ to \eqref{MIE} on an admissible interval $I$ fulfills $|a_{u}(r,x)|\leq n(r,x)$ for all $(r,x)\in I\times S$ with $n:=\lambda$.
\end{proof}

Eventually, we are ready to establish the one-dimensional global existence- and uniqueness result.

\begin{proof}[Proof of Theorem~\ref{One-Dimensional Global Existence Theorem}]
Let us verify the first claim. We begin with the case $\underline{d} > -\infty$ and $\overline{d} < \infty$. By using the function $[0,T]\times S\times (-D)\rightarrow\mathbb{R}$, $(t,x,w)\mapsto - f(t,x,-w)$, Proposition~\ref{Boundary Proposition} yields that $I_{\tilde{g}}=[0,T]$ for every $\tilde{g}\in B_{b}(S,(\underline{d},\overline{d}))$ that is bounded away from $\{\underline{d},\overline{d}\}$. Thus, for all $n\in\mathbb{N}$ we define
\begin{equation}\label{Limit Sequence}
g_{n}:=(g\vee(\underline{d} + (\overline{d}-\underline{d})2^{-n}))\wedge(\overline{d}-(\overline{d}-\underline{d})2^{-n}),
\end{equation}
then $g_{n}\in B_{b}(S,(\underline{d},\overline{d}))$ and $\mathrm{dist}(g_{n},\{\underline{d},\overline{d}\}) \geq (\overline{d}-\underline{d})2^{-n}$, which guarantees that $I_{g_{n}} = [0,T]$. Because $|g_{n} - g|\leq (\overline{d} - \underline{d})2^{-n}$ for all $n\in\mathbb{N}$, the sequence $(g_{n})_{n\in\mathbb{N}}$ converges uniformly to $g$. If $D\subsetneq [\underline{d},\overline{d}]$, then we let $\overline{f}$ denote the unique extension of $f$ to $[0,T]\times S\times [\underline{d},\overline{d}]$ such that
\[
\overline{f}\in BC_{\mu}^{1-}([0,T]\times S\times [\underline{d},\overline{d}]).
\]
Otherwise, we just set $\overline{f}:=f$. According to Proposition~\ref{Limit Proposition}, the sequence $(u_{g_{n}})_{n\in\mathbb{N}}$ converges uniformly to the unique global bounded solution to \eqref{MIE} with $\overline{f}$ instead of $f$, which we denote by $\overline{u}_{g}$. By uniqueness, $\overline{u}_{g}= u_{g}$ whenever $g$ is bounded away from $\{\underline{d},\overline{d}\}$. Since Proposition~\ref{Boundary Proposition} also shows that $\overline{u}_{g}$ does not attain the value $\underline{d}$ (resp.~$\overline{d}$) if the same is true for $g$, the function $\overline{u}_{g}$ is $D$-valued. Hence, $\overline{u}_{g}$ is the unique global bounded solution to \eqref{MIE}.

Let us turn to the case $\underline{d} > -\infty$ and $\overline{d} = \infty$. Lemma~\ref{One-Dimensional Growth Lemma} and Proposition~\ref{Boundary Proposition} entail that $I_{\tilde{g}} = [0,T]$ for every $\tilde{g}\in B_{b}(S,(\underline{d},\infty))$ that is bounded away from $\underline{d}$. For each $n\in\mathbb{N}$ we set
\begin{equation}\label{Limit Sequence 2}
g_{n} := g\vee (\underline{d} + 2^{-n}),
\end{equation}
then $g_{n}\in B_{b}(S,(\underline{d},\infty))$ and $\mathrm{dist}(g_{n},\underline{d}) \geq 2^{-n}$, which implies that $I_{g_{n}}= [0,T]$. In addition, $|g_{n} - g| \leq 2^{-n}$ and $g_{n}(x)-\underline{d} \leq (g(x)-\underline{d})\vee(1/2)$ for all $n\in\mathbb{N}$ and each $x\in S$. We can now infer from Lemma~\ref{One-Dimensional Growth Lemma} and Proposition~\ref{Limit Proposition} that $(u_{g_{n}})_{n\in\mathbb{N}}$ converges uniformly to the unique global bounded solution to \eqref{MIE}, denoted by $\overline{u}_{g}$. Once again, uniqueness forces $\overline{u}_{g} = u_{g}$ if $g$ is bounded away from $\underline{d}$. From Proposition~\ref{Boundary Proposition} we see that $\overline{u}_{g}$ cannot attain the value $\underline{d}$ if $g(x) > \underline{d}$ for all $x\in S$. For this reason, $\overline{u}_{g}$ is $D$-valued, which concludes the case $\underline{d} > -\infty$ and $\overline{d} = \infty$. The case $\underline{d} = -\infty$ and $\overline{d} < \infty$ is a consequence of the last case, by utilizing the familiar function $[0,T]\times S\times (-D)\rightarrow\mathbb{R}$, $(t,x,w)\mapsto -f(t,x,-w)$.

In the end, we note that for each $n\in\mathbb{N}$ the function $g_{n}$ given either by \eqref{Limit Sequence} or \eqref{Limit Sequence 2}, depending on which case occurs, is continuous if $g$ is. Hence, as the uniform limit of a sequence of real-valued (right-)continuous functions on $[0,T]\times S$ is (right-)continuous, Theorem~\ref{Non-Extendibility Theorem} implies the second assertion.
\end{proof}

\begin{proof}[Proof of Corollary~\ref{One-Dimensional Linear Corollary}]
At first, Theorem~\ref{One-Dimensional Global Existence Theorem} entails that \eqref{MIE} admits the unique global bounded solution $\overline{u}_{g}$, which is (right-)continuous if $\mathscr{X}$ is (right-hand) Feller, $a(s,\cdot)$ and $b(s,\cdot)$ are continuous for $\mu$-a.e.~$s\in [0,T]$, and $g$ is continuous. Let us set
\[
\overline{f}(t,x,w) := a(t,x) + b(t,x)w\quad\text{for all $(t,x,w)\in [0,T]\times S\times\mathbb{R}$,}
\]
then Proposition~\ref{Linear Proposition} implies that the unique global bounded solution $\tilde{u}_{g}$ to \eqref{MIE} with $f$ replaced by $\overline{f}$ admits the required representation \eqref{One-dimensional Linear Representation Formula}. However, $\overline{u}_{g}$ is also a global bounded solution to \eqref{MIE} when $f$ is replaced by $\overline{f}$. Uniqueness gives $\overline{u}_{g} = \tilde{u}_{g}$.
\end{proof}

\noindent
{\bf Acknowledgments:} the author wishes to thank his supervisor Alexander Schied and his colleague Dimitri Schwab for helpful suggestions during the preparation of the paper.

\let\OLDthebibliography\thebibliography
\renewcommand\thebibliography[1]{
  \OLDthebibliography{#1}
  \setlength{\parskip}{0pt}
  \setlength{\itemsep}{2pt }}

\bibliographystyle{plain}

\end{document}